\theoremstyle{plain}
\newtheorem{theorem}{Theorem}[section]
\newtheorem{lemma}[theorem]{Lemma}
\newtheorem{proposition}[theorem]{Proposition}
\newtheorem{corollary}[theorem]{Corollary}
\numberwithin{equation}{section}
\theoremstyle{definition}
\newtheorem{definition}[theorem]{Definition}
\newtheorem{remark}[theorem]{Remark}
\DeclareMathOperator{\Mod}{-Mod}
\DeclareMathOperator{\module}{-mod}
\DeclareMathOperator{\Hom}{Hom}
\DeclareMathOperator{\Tor}{Tor}
\DeclareMathOperator{\hd}{hd}
\DeclareMathOperator{\gd}{gd}
\DeclareMathOperator{\pd}{pd}
\DeclareMathOperator{\td}{td}
\DeclareMathOperator{\fdim}{findim}
\DeclareMathOperator{\gldim}{gldim}
\DeclareMathOperator{\coker}{coker}
\DeclareMathOperator{\im}{im}
\newcommand{\mk}{{\mathbbm{k}}}
\newcommand{\C}{{\mathscr{C}}}
\newcommand{\tC}{{\underline{\mathscr{C}}}}
\newcommand{\Z}{{\mathbb{Z}_+}}
\newcommand{\FI}{{\mathscr{FI}}}
\title{Filtrations and Homological degrees of FI-modules}
\author{Liping Li}
\address{Key Laboratory of Performance Computing and Stochastic Information Processing (Ministry of Education), College of Mathematics and Computer Science, Hunan Normal University, Changsha, Hunan 410081, China.}
\email{lipingli@hunnu.edu.cn}
\author{Nina Yu}
\address{College of Mathematics, Xiamen University; Xiamen, Fujian, 361000, China.}
\email{ninayu@xmu.edu.cn}
\thanks{The first author is supported by the National Natural Science Foundation of China 11541002, the Construct Program of the Key Discipline in Hunan Province, and the Start-Up Funds of Hunan Normal University 830122-0037. Both authors highly appreciate the anonymous referee for carefully checking the manuscript and providing many valuable comments and suggestions.}
\begin{document}

\begin{abstract}
Let $\mk$ be a commutative Noetherian ring. In this paper we consider $\sharp$-filtered modules of the category $\FI$ firstly introduced in \cite{N}. We show that a finitely generated $\FI$-module $V$ is $\sharp$-filtered if and only if its higher homologies all vanish, and if and only if a certain homology vanishes. Using this homological characterization, we characterize finitely generated $\C$-modules $V$ whose projective dimension $\pd(V)$ is finite, and describe an upper bound for $\pd(V)$. Furthermore, we give a new proof for the fact that $V$ induces a finite complex of $\sharp$-filtered modules, and use it as well as a result of Church and Ellenberg in \cite{CE} to obtain another upper bound for homological degrees of $V$.
\end{abstract}

\maketitle

\section{Introduction}

\subsection{Motivation}

The category $\FI$, whose objects are finite sets and morphisms are injections between them, has played a central role in representation stability theory introduced by Church and Farb in \cite{CF}. It has many interesting properties, which were used to prove quite a few stability phenomena observed in \cite{CE, CEF, CF, Farb, P}. Among these properties, the existence of a \emph{shift functor} $\Sigma$ is extremely useful. For instances, it was applied to show the locally Noetherian property of $\FI$ over any Noetherian ring by Church, Ellenberg, Farb, and Nagpal in \cite{CEFN}, and the Koszulity of $\FI$ over a field of characteristic 0 by Gan and the first author in \cite{GL1}. Recently, Nagpal proved that for an arbitrary finitely generated representation $V$ of $\FI$, when $N$ is large enough, $\Sigma_N V$ has a special filtration, where $\Sigma_N$ is the $N$-th iteration of $\Sigma$; see \cite[Theorem A]{N}. Church and Ellenberg showed that $\FI$-modules have Castlenuovo-Mumford regularity (for a definition in commutative algebra, see \cite{ES}), and gave an upper bound for the regularity; see \cite[Theorem A]{CE}.

The main goal of this paper is to use the shift functor to investigate homological degrees and special filtrations of $\FI$-modules. Specifically, we want to:
\begin{enumerate}
\item obtain a homological characterization of \emph{$\sharp$-filtered modules} firstly introduced by Nagpal in \cite{N}; and
\item use these $\sharp$-filtered modules, which play almost the same role as projective modules for homological calculations, to obtain upper bounds for projective dimensions and homological degrees of finitely generated $\FI$-modules.
\end{enumerate}

In contrast to the combinatorial approach described in \cite{CE}, our methods to realize these objectives are mostly conceptual and homological. We do not rely on any specific combinatorial structure of the category $\FI$. The main technical tools we use are the shift functor $\Sigma$ and its induced cokernel functor $D$ introduced in \cite[Subsection 2.3]{CEFN} and \cite[Section 3]{CE}. Therefore, it is hopeful that our approach, with adaptable modifications, can be applied to other combinatorial categories recently appearing in representation stability theory (\cite{SS}).

\subsection{Notation}

Before describing the main results, we first  introduce necessary notation. Throughout this paper we let $\C$ be a skeletal category of $\FI$, whose objects are $[n] = \{ 1, 2, \ldots, n \}$ for $n \in \Z$, the set of nonnegative integers. By convention, $[0] = \emptyset$. By $\mk$ we mean a commutative Noetherian ring with identity. Given a set $S$, $\underline{S}$ is the free $\mk$-module spanned by elements in $S$. Let $\tC$ be the $\mk$-linearization of $\C$, which can be regarded as both a $\mk$-linear category and a $\mk$-algebra without identity.

A \textit{representation} of $\C$, or a $\C$-\emph{module}, is a covariant functor $V$ from $\C$ to $\mk \Mod$, the category of left $\mk$-modules. Equivalently, a $\C$-module is a $\tC$-module, which by definition is a $\mk$-linear covariant functor from $\tC$ to $\mk \Mod$. It is well known that $\C \Mod$ is an abelian category. Moreover, it has enough projectives. In particular, for $i \in \Z$, the $\mk$-linearization $\tC(i, -)$ of the representable functor $\C(i, -)$ is projective.

A representation $V$ of $\C$ is said to be \emph{finitely generated} if there exists a finite subset $S$ of $V$ such that any submodule containing $S$ coincides with $V$; or equivalently, there exists a surjective homomorphism
\begin{equation*}
\bigoplus _{i \in \Z} \tC(i, -) ^{\oplus a_i} \to V
\end{equation*}
such that $\sum _{i \in \Z} a_i < \infty$. It is said to be \emph{generated in degrees $\leqslant N$} if in the above surjection one can let $a_i = 0$ for all $i > N$. Obviously, $V$ is finitely generated if and only if it is generated in degrees $\leqslant N$ for a certain $N \in \Z$ and the values of $V$ on objects $i \leqslant N$ are finitely generated $\mk$-modules. Since $\C$ is \emph{locally Noetherian} by the fundamental result in \cite{CEFN}, the category $\C \module$ of finitely generated $\C$-modules is abelian. In this paper we only consider finitely generated $\C$-modules over commutative Noetherian rings.

Given a finitely generated $\C$-module $V$ and an object $i \in \Z$, we denote its value on $i$ by $V_i$. For every $n \in \Z$, one can define a \emph{truncation functor} $\tau_n: \C \module \to \C \module$ as follows: For $V \in \C \module$,  $$(\tau_{n}V)_{i}:=\begin{cases}
0, & i<n\\
V_{i}, & i\geqslant n
\end{cases}$$ A finitely generated $\C$-module $V$ is called \emph{torsion} if there exists some $N \in \Z$ such that $\tau_N V = 0$. In other words, $V_i = 0$ for $i \geqslant N$.

The category $\C$ has a \emph{self-embedding functor} $\iota: \C \to \C$ which is faithful and sends an object $i \in \Z$ to $i+1$. For a morphism $\alpha \in \C(i, j)$ (which is an injection from $[i]$ to $[j]$), $\iota (\alpha)$ is an injection from $[i+1]$ to $[j+1]$ defined as follows:
\begin{equation} \label{iota}
(\iota (\alpha)) (r) = \begin{cases}
1, & r = 1 \in [i+1];\\
\alpha(r-1) + 1, & 1 \neq r \in [i+1].
\end{cases}
\end{equation}
The functor $\iota$ induces a pull-back $\iota^{\ast}: \C \module \to \C \module$. The \emph{shift functor} $\Sigma$ is defined to be $\iota^{\ast} \circ \tau_1$. For details, see \cite{CEFN, GL1, Li2}.

By the directed structure, the $\mk$-linear category $\tC$ has a two-sided ideal
\begin{equation*}
J = \bigoplus _{0 \leqslant i < j} \tC(i, j).
\end{equation*}
Therefore,
\begin{equation*}
\tC_0 = \bigoplus _{i \in \Z} \tC(i, i)
\end{equation*}
is a $\tC$-module via identifying it with $\tC / J$.

Given a finitely generated $\C$-module $V$, its \emph{torsion degree} is defined to be
\begin{equation*}
\td(V) = \sup \{ i \in \Z \mid \Hom _{\tC} (\tC(i, i), V) \neq 0 \}
\end{equation*}
or $-\infty$ if $\td(V)=\emptyset$. In the latter case we say that $V$ is \emph{torsionless}. Its $0$-th \emph{homology} is defined to be
\begin{equation*}
H_0 (V) = V/JV \cong \tC_0 \otimes _{\tC} V.
\end{equation*}
Since $\tC_0 \otimes _{\tC} -$ is right exact, we define the $s$-th \emph{homology}
\begin{equation*}
H_s (V) = \Tor _s^{\tC} ( \tC_0, V)
\end{equation*}
for $s \geqslant 1$. Note that this is a $\tC$-module since $\tC_0$ is a $(\tC, \tC)$-bimodule. Moreover, it is finitely generated and torsion. For $s \in \Z$, the $s$-th \emph{homological degree} is set to be
\begin{equation*}
\hd_s(V) = \td(H_s(V)).
\end{equation*}
Sometimes we call the $0$-th homological degree \emph{generating degree}, and denote it by $\gd(V)$.

\begin{remark} \normalfont
The above definition of torsion degrees seems mysterious, so let us give an equivalent but more concrete definition. Let $V$ be a finitely generated $\C$-module and $i \in \Z$ be an object. If there exists a nonzero $v \in V_i$ and $\alpha_0 \in \C(i, i+1)$ such that $\alpha_0 \cdot v = 0$, we claim that for all $\alpha \in \C(i, i+1)$, one has $\alpha \cdot v = 0$. Indeed, since the symmetric group $S_{i+1} = \C(i+1, i+1)$ acts transitively on $\C(i, i+1)$ from the left side, for an arbitrary $\alpha \in \C(i, i+1)$, we can find an element $g \in \C(i+1, i+1)$ (which is unique) such that $\alpha = g \alpha_0$. Therefore, $\alpha \cdot v = g \alpha_0 \cdot v = 0$.

This observation tells us that $\alpha_0$ (and hence all $\alpha \in \C(i, i+1)$) sends the $\C(i,i)$-module $\tC(i, i) \cdot v$ to 0. Indeed, for every $g \in \C(i, i)$, since $\alpha_0 g \in \C(i, i+1)$, one has $(\alpha_0  g) \cdot v = 0$ by the argument in the previous paragraph. But the $\C(i,i)$-module $\tC(i, i) \cdot v$ can be regarded as a $\C$-module in a natural way, so $\Hom _{\tC} (\tC(i, i), V) \neq 0$. Conversely, if $\Hom _{\tC} (\tC(i, i), V) \neq 0$, one can easily find a nonzero element $v \in V_i$ such that $\alpha \cdot v = 0$ for $\alpha \in \C(i, i+1)$.

The above observations  immediately imply
\begin{equation*}
\td(V) = \sup \{ i \in \Z \mid \exists \, 0 \neq v \in V_i \text{ and } \alpha \in \C(i, i+1) \text{ such that } \alpha \cdot v = 0 \}.
\end{equation*}
\end{remark}

\begin{remark} \normalfont
Homologies of $\FI$-modules were defined to be homologies of a special complex in \cite[Section 2.4]{CEFN}. Gan and the first author proved in \cite{GL3} that homologies of this special complex coincide with ones defined in the above way. Since $\Tor$ is a classical homological construction, in this paper we take the above definition.
\end{remark}

Since each $\mk S_i = \tC(i, i)$ is a subalgebra of $\tC$ for $i \in \Z$, given a $\mk S_i$-module $T$, it induces a $\tC$-module $\tC \otimes _{\mk S_i} T$. We call these modules \emph{basic $\sharp$-filtered modules}. A finitely generated $\C$-module $V$ is called \emph{$\sharp$-filtered} by Nagpal if it has a filtration
\begin{equation*}
0 = V^{-1} \subseteq V^0 \subseteq \ldots \subseteq V^n = V
\end{equation*}
such that $V^{i+1}/V^i$ is isomorphic to a basic $\sharp$-filtered module for $-1 \leqslant i \leqslant n-1$; see \cite[Definition 1.10]{N}. The reader will see that $\sharp$-filtered modules have similar homological behaviors as projective modules.

\subsection{Main results}

Now we are ready to state main results of this paper. The first result characterizes $\sharp$-filtered modules by homological degrees.

\begin{theorem}[Homological characterizations of $\sharp$-filtered modules] \label{first main result}
Let $\mk$ be a commutative Noetherian ring and let $V$ be a finitely generated $\C$-module. Then the following statements are equivalent:
\begin{enumerate}
\item $V$ is $\sharp$-filtered;
\item $\hd_s(V) = - \infty$ for all $s \geqslant 1$;
\item $\hd_1(V) = - \infty$;
\item $\hd_s(V) = -\infty$ for some $s \geqslant 1$.
\end{enumerate}
\end{theorem}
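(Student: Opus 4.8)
The plan is to prove the cycle of implications $(1) \Rightarrow (2) \Rightarrow (3) \Rightarrow (4) \Rightarrow (1)$, with the last implication being the substantive one.

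For $(1) \Rightarrow (2)$, I would first compute the homologies of a basic $\sharp$-filtered module $\tC \otimes_{\mk S_i} T$. The point is that $\tC \otimes_{\mk S_i} -$ should take projective $\mk S_i$-modules to projective $\tC$-modules (since $\tC(i,i) = \mk S_i$ is a subalgebra and $\tC$ is free as a right $\mk S_i$-module in the appropriate sense via the coset decomposition of $\C(i,j)$), and more generally it is exact on all $\mk S_i$-modules because over the group algebra we have flatness issues only in positive characteristic — but actually what I really need is just that $H_s(\tC \otimes_{\mk S_i} T) = 0$ for $s \geqslant 1$, which follows because $\tC_0 \otimes_{\tC} (\tC \otimes_{\mk S_i} T) \cong \tC_0 \otimes_{\mk S_i} T$ is concentrated in degree $i$ and a derived-functor / base-change spectral sequence argument collapses. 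Concretely, I expect $H_0(\tC \otimes_{\mk S_i} T)$ to be $T$ sitting in degree $i$ and $H_s = 0$ for $s \geqslant 1$; in particular $\hd_s = -\infty$ for $s \geqslant 1$. Then for a general $\sharp$-filtered $V$ with filtration $0 = V^{-1} \subseteq \cdots \subseteq V^n = V$, induct on $n$ using the long exact sequence in homology associated to $0 \to V^{n-1} \to V^n \to V^n/V^{n-1} \to 0$: since both end terms have vanishing higher homology, so does the middle.

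The implications $(2) \Rightarrow (3)$ and $(2) \Rightarrow (4)$ are trivial, and $(3) \Rightarrow (4)$ is immediate by taking $s = 1$. So the heart of the theorem is $(4) \Rightarrow (1)$: if some single higher homology $\hd_s(V) = -\infty$, then $V$ is $\sharp$-filtered. Here I would argue by induction on the generating degree $\gd(V)$. I expect to use the shift functor $\Sigma$ and the cokernel functor $D$ from \cite{CEFN, CE}: there should be a short exact sequence relating $V$, $\Sigma V$, and $DV$ (roughly $0 \to V_{T} \to V \to \Sigma V \to DV \to 0$ or a two-step version), together with the fact that $\Sigma$ lowers generating degree by one and interacts predictably with homology. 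A key structural input should be that a finitely generated module with $\hd_s(V) = -\infty$ is first of all torsionless (otherwise the torsion part contributes to $H_s$ — this needs the observation that torsion modules have nonvanishing homologies in all degrees, or at least in degree $s$, which is itself a small lemma using that $\tC_0 \otimes_{\tC}^{\mathbf{L}} -$ detects torsion). Granting torsionlessness, one compares $H_s(V)$ with $H_s(\Sigma V)$ and $H_{s-1}(DV)$ or similar via the long exact sequences coming from the defining sequence of $D$; the vanishing of $\hd_s(V)$ should propagate to show $\Sigma V$ (or $DV$) again has a vanishing homological degree but smaller generating degree, and then one assembles a $\sharp$-filtration of $V$ from that of the shifted/truncated pieces using that basic $\sharp$-filtered modules are exactly those built from $\mk S_i$-modules induced up.

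The main obstacle I anticipate is the bookkeeping in $(4) \Rightarrow (1)$: controlling how a single homological degree $\hd_s$ behaves under $\Sigma$ and $D$ without assuming all of them vanish a priori — one only knows vanishing at one spot $s$, so the induction must be set up so that this one spot is enough to force $\sharp$-filteredness, presumably by combining it with the already-known case $s = 0$ behavior (generating degree) and a dimension-shifting argument that moves the index $s$ down to $1$ or $0$ after finitely many applications of $\Sigma$. A secondary subtlety is the positive-characteristic case, where $\mk S_i$ need not be semisimple, so I must be careful that ``basic $\sharp$-filtered'' is defined with arbitrary (not just projective) $\mk S_i$-modules $T$ and that the homological computation in step one genuinely does not need semisimplicity — it should not, since it is ultimately a statement about $\tC$ being suitably free/flat over each $\mk S_i$, independent of the module $T$.
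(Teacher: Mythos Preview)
Your plan for $(1)\Rightarrow(2)\Rightarrow(3)\Rightarrow(4)$ is fine and matches the paper; the vanishing of $H_s$ on basic $\sharp$-filtered modules follows directly from the fact that $\tC 1_i$ is free as a right $\mk S_i$-module (so $\tC\otimes_{\mk S_i}-$ is exact and sends projectives to projectives), and no spectral sequence is needed.

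Your outline of $(4)\Rightarrow(1)$ has two genuine gaps. First, a factual slip: $\Sigma$ does not lower generating degree---it preserves it; the functor that drops $\gd$ by one is $D$. Second, your torsionlessness argument does not go through: even if one knew that a nonzero torsion module had $H_s\neq 0$ for all $s$, the long exact sequence for $0\to V_T\to V\to V_F\to 0$ would not force $H_s(V)\neq 0$, because $H_s(V_T)$ can be killed by $H_{s+1}(V_F)$. The paper proceeds quite differently. It first proves $(3)\Rightarrow(1)$ by a direct degree-peeling argument with no $\Sigma$ or $D$: if $\hd_1(V)\leqslant\gd(V)$, the top quotient $V/V'$ (where $V'$ is generated by $\bigoplus_{i<\gd(V)}V_i$) satisfies $\hd_1\leqslant\gd$ and is generated in a single degree, hence is basic $\sharp$-filtered, and one recurses on $V'$. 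Only then does the paper handle general $s$, and it does so by dimension-shifting through \emph{syzygies} rather than through $\Sigma$: if $H_s(V)=0$, then the appropriate syzygy $Z$ in a projective resolution has $H_1(Z)=0$, so $Z$ is $\sharp$-filtered by the case just established; one then descends along the resolution using a two-out-of-three lemma for $\sharp$-filtered modules. That lemma (first and second terms $\sharp$-filtered $\Rightarrow$ third term $\sharp$-filtered) is where $\Sigma$, $D$, torsionlessness, and induction on $\gd$ finally enter: the long exact sequence only yields $H_2(W)=0$, and the paper must separately show $H_2(W)=0\Rightarrow W$ torsionless (by embedding $\ker(W\to\Sigma W)$ into a torsionless $DZ$) and then $\Rightarrow W$ $\sharp$-filtered (via the ``crucial recursion'' that $V$ torsionless with $DV$ $\sharp$-filtered forces $V$ $\sharp$-filtered). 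So your ingredients are all present, but the architecture that makes them fit together---especially isolating $(3)\Rightarrow(1)$ first and routing the rest through syzygies---is missing from the outline.
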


\begin{remark} \normalfont This theorem was also independently proved almost at the same time by Ramos in \cite[Theorem B]{R} via a different approach.
\end{remark}

Using these homological characterizations, one can deduce an upper bound for projective dimensions of finitely generated $\C$-modules whose projective dimension is finite.

\begin{theorem}[Upper bounds of projective dimensions] \label{second main result}
Let $\mk$ be a commutative Noetherian ring whose finitistic dimension $\fdim \mk$ is finite \footnote{By definition, the finitistic dimension is the supremum of projective dimensions of finitely generated $\mk$-modules whose projective dimension is finite. The famous finitistic dimension conjecture asserts that if $\mk$ is a finite dimensional algebra, then $\fdim \mk < \infty$. However, the finitistic dimension of an arbitrary commutative Noetherian ring might be infinity.}, and let $V$ be a finitely generated $\C$-module with $\gd(V) = n$. Then the projective dimension $\pd (V)$ is finite if and only if for $0 \leqslant i \leqslant n$, one has
\begin{equation*}
V^i / V^{i-1} \cong \tC \otimes _{\mk S_i} (V^i / V^{i-1})_i
\end{equation*}
and
\begin{equation*}
\pd _{\mk S_i} ((V^i / V^{i-1})_i) < \infty,
\end{equation*}
where $V^i$ is the submodule of $V$ generated by $\bigoplus _{j \leqslant i} V_j$, Moreover, in that case
\begin{equation*}
\pd(V) = \max \{ \pd _{\mk S_i} ((V^i / V^{i-1})_i) \} _{i=0}^n = \max \{ \pd _{\mk} ((V^i / V^{i-1})_i) \} _{i=0}^n \leqslant \fdim \mk.
\end{equation*}
\end{theorem}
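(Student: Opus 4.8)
\emph{Reduction to the $\sharp$-filtered case.} The plan is to first observe that finite projective dimension forces $V$ to be $\sharp$-filtered, and then to compute $\pd(V)$ through the canonical filtration of a $\sharp$-filtered module. Indeed, if $\pd(V) = d < \infty$, then choosing a projective resolution of length $d$ gives $H_s(V) = \Tor_s^{\tC}(\tC_0, V) = 0$ for $s > d$, so $\hd_{d+1}(V) = \td(H_{d+1}(V)) = -\infty$, and Theorem~\ref{first main result} (implication $(4) \Rightarrow (1)$) shows $V$ is $\sharp$-filtered. For a $\sharp$-filtered module I will use the structural fact from the $\sharp$-filtered machinery (see \cite{N} and the proof of Theorem~\ref{first main result}) that the canonical filtration $0 = V^{-1} \subseteq V^0 \subseteq \cdots \subseteq V^n = V$, with $V^i$ generated by $\bigoplus_{j \leqslant i} V_j$, has each $V^i$ again $\sharp$-filtered and each quotient $W^i := V^i/V^{i-1}$ isomorphic to the basic $\sharp$-filtered module $\tC \otimes_{\mk S_i} (W^i)_i$. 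Note that the converse direction of the asserted equivalence is then immediate: if every $W^i$ is a basic $\sharp$-filtered module with $\pd_{\mk S_i}((W^i)_i) < \infty$, then $V$ is $\sharp$-filtered by the very definition of a $\sharp$-filtered module, and the bound on $\pd(V)$ below will be finite.

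\emph{Projective dimension of a basic $\sharp$-filtered module.} The first lemma I would establish is that for every finitely generated $\mk S_i$-module $T$,
\[
\pd_{\tC}(\tC \otimes_{\mk S_i} T) = \pd_{\mk S_i}(T).
\]
Recall $\tC \otimes_{\mk S_i} -$ means $\tC e_i \otimes_{\mk S_i} -$ with $e_i = \mathrm{id}_{[i]}$; since each hom-set $\C(i, j)$ is a free right $S_i$-set (precomposition by $S_i$ on injections is free), $\tC e_i = \tC(i,-)$ is free as a right $\mk S_i$-module, so this functor is exact, and it sends $\mk S_i$ to the projective $\tC$-module $\tC(i, -)$; applying it to a projective resolution of $T$ yields the inequality $\leqslant$. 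For $\geqslant$, the adjunction $\Hom_{\tC}(\tC \otimes_{\mk S_i} -, N) \cong \Hom_{\mk S_i}(-, N_i)$ (using Yoneda, $\Hom_{\tC}(\tC(i,-), N) \cong N_i$) together with the exactness just used gives $\mathrm{Ext}^s_{\tC}(\tC \otimes_{\mk S_i} T, N) \cong \mathrm{Ext}^s_{\mk S_i}(T, N_i)$ for every $\tC$-module $N$; taking $N$ concentrated in degree $i$ with an arbitrary value there shows $\pd_{\mk S_i}(T) \leqslant \pd_{\tC}(\tC \otimes_{\mk S_i} T)$.

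\emph{Assembling the formula.} Applying the Horseshoe Lemma repeatedly to the short exact sequences $0 \to V^{i-1} \to V^i \to W^i \to 0$ and using the first lemma gives $\pd(V) \leqslant \max_{0 \leqslant i \leqslant n} \pd(W^i) = \max_i \pd_{\mk S_i}((W^i)_i)$. For the reverse inequality, fix $i$ and let $N$ be any $\tC$-module concentrated in degree $i$. For $j \neq i$ the first lemma gives $\mathrm{Ext}^s_{\tC}(W^j, N) \cong \mathrm{Ext}^s_{\mk S_j}((W^j)_j, N_j) = 0$ for all $s$; feeding this into the long exact sequences of the filtration, the pieces with $j < i$ force $\mathrm{Ext}^s_{\tC}(V^{i-1}, N) = 0$ for all $s$, the pieces with $j > i$ give $\mathrm{Ext}^s_{\tC}(V, N) \cong \mathrm{Ext}^s_{\tC}(V^i, N)$, and the sequence for $j = i$ then yields $\mathrm{Ext}^s_{\tC}(V, N) \cong \mathrm{Ext}^s_{\tC}(W^i, N) \cong \mathrm{Ext}^s_{\mk S_i}((W^i)_i, N_i)$. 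Letting $N_i$ range over all finitely generated $\mk S_i$-modules, this gives $\pd(V) \geqslant \pd_{\mk S_i}((W^i)_i)$ for every $i$, hence $\pd(V) = \max_i \pd_{\mk S_i}((W^i)_i)$; in particular $\pd(V) < \infty$ if and only if $\pd_{\mk S_i}((W^i)_i) < \infty$ for all $i$, which together with the first paragraph proves the equivalence. Finally, when these dimensions are finite, the classical fact that a finitely generated $\mk G$-module ($G$ a finite group, $\mk$ commutative Noetherian) of finite projective dimension over $\mk G$ has the same projective dimension over $\mk$ gives $\pd_{\mk S_i}((W^i)_i) = \pd_{\mk}((W^i)_i)$, and $\pd_{\mk}((W^i)_i) \leqslant \fdim \mk$ by definition of the finitistic dimension; this produces the final chain of equalities and the bound.

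\emph{Main obstacle.} The homological bookkeeping — the two lemmas and the long-exact-sequence chase — is routine once Theorem~\ref{first main result} and the structural description of $\sharp$-filtered modules are available. The genuinely non-formal inputs are: (i) that the canonical filtration of a $\sharp$-filtered module has \emph{basic} $\sharp$-filtered quotients, needed only for the ``$\pd(V) < \infty \Rightarrow$'' direction, which belongs to the $\sharp$-filtered machinery; and (ii) the classical group-ring statement comparing $\pd_{\mk S_i}$ with $\pd_{\mk}$ (which ultimately rests on cohomological triviality of modules of finite projective dimension), for which I would either cite the literature on integral representations of finite groups or supply a short separate argument. Some care is also needed with the non-unital algebra $\tC$ so that the identifications $\tC \otimes_{\mk S_i} \mk S_i \cong \tC(i, -)$ and $\Hom_{\tC}(\tC(i,-), N) \cong N_i$ are set up correctly, but this is the only delicate point in the otherwise formal part of the argument.
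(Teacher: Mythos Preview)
Your argument is correct and shares the same skeleton as the paper's proof: first invoke Theorem~\ref{first main result} to force a $\sharp$-filtered structure when $\pd(V)<\infty$, then use Proposition~\ref{structure of sharp-filtered modules} to identify the quotients $W^i$ with basic $\sharp$-filtered modules, and finally compare projective dimensions through the filtration. Where you differ from the paper is in the two supporting lemmas. For the equality $\pd_{\tC}(\tC\otimes_{\mk S_i}T)=\pd_{\mk S_i}(T)$, the paper (Lemma~\ref{compare pd}) obtains the inequality $\geqslant$ by choosing a finite projective resolution of $\tC\otimes_{\mk S_i}T$ with all terms generated in degree $i$ and restricting it to degree $i$; you instead use the adjunction $\Hom_{\tC}(\tC\otimes_{\mk S_i}-,N)\cong\Hom_{\mk S_i}(-,N_i)$ together with exactness of $\tC\otimes_{\mk S_i}-$ to pass directly to $\mathrm{Ext}$. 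For the inequality $\pd(V)\geqslant\pd(W^i)$, the paper (Lemma~\ref{pd of basic components}) assumes $\pd(V)<\infty$ and constructs a simultaneous projective resolution of the whole filtration, carefully tracking generating degrees until a projective syzygy splits; your long exact sequence chase with test modules $N$ concentrated in a single degree bypasses this construction entirely and does not need the finiteness hypothesis. Both approaches are valid; yours is more functorial and slightly shorter, while the paper's stays closer to the Tor-based language used elsewhere and avoids introducing $\mathrm{Ext}$ into a paper otherwise phrased in terms of homology.
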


\begin{remark} \normalfont
This theorem asserts that finitely generated $\C$-modules which are not $\sharp$-filtered have infinite projective dimension. Moreover, if the finitistic dimension of $\mk$ is 0, or in particular the global dimension $\gldim \mk$ is 0, then the projective dimension of a $\C$-module is either 0 or infinity. This special result has been pointed out in \cite[Corollary 1.6]{GL2} for fields of characteristic 0.
\end{remark}

Another important application of Theorem \ref{first main result} is to prove the fact that every finitely generated $\C$-module can be approximated by a finite complex of $\sharp$-filtered modules, which was firstly proved by Nagpal in \cite[Theorem A]{N}. We give a new proof based on the conclusion of Theorem \ref{first main result} as well as the shift functor.

\begin{theorem}[$\sharp$-Filtered complexes] \label{sharp-filtered complex}
Let $\mk$ be a commutative Noetherian ring and let $V$ be a finitely generated $\C$-module. Then there exists a complex
\begin{equation*}
F^{\bullet}: \quad 0 \to V \to F^0 \to F^1 \to \ldots \to F^n \to 0
\end{equation*}
satisfying the following conditions:
\begin{enumerate}
\item each $F^i$ is a $\sharp$-filtered module with $\gd(F^i) \leqslant \gd(V) - i$;
\item $n \leqslant \gd(V)$;
\item the homology in each degree of the complex is a torsion module, including the homology at $V$.
\end{enumerate}
In particular, $\Sigma_d V$ is a $\sharp$-filtered module for $d \gg 0$.
\end{theorem}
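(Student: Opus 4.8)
\emph{Strategy.} The plan is to prove the existence of the complex by induction on $\gd(V)$, after first removing the torsion of $V$. If $0 \to T \to V \to W \to 0$ is exact with $T$ torsion and $W$ admits a complex $0 \to W \to F^0 \to \cdots \to F^n \to 0$ as in the statement, then replacing the initial map $W \to F^0$ by its composite with $V \twoheadrightarrow W$ produces a complex $0 \to V \to F^0 \to \cdots \to F^n \to 0$ whose homology at each $F^i$ is unchanged and whose homology at $V$ equals $T$ (which is torsion) as soon as $W \hookrightarrow F^0$ --- something the construction will always arrange. Thus it is enough to treat \emph{torsionless} $V$, and the whole proof hinges on the following, which I will prove separately:

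\textbf{Lemma.} For every finitely generated $\C$-module $V$, the module $\Sigma_d V$ is $\sharp$-filtered for $d \gg 0$.

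\emph{Assembling the complex, granted the Lemma.} Let $V$ be torsionless. I will use the natural exact sequence $0 \to V \to \Sigma V \to DV \to 0$ attached to the shift functor and its cokernel functor $D$: here $V \to \Sigma V$ is injective because $V$ is torsionless, $\gd(\Sigma V) \leqslant \gd(V)$, and $\gd(DV) \leqslant \gd(V) - 1$. Iterating and using that $\Sigma$ is exact, the natural map $V \to \Sigma_d V$ is injective with $\gd\bigl(\coker(V \to \Sigma_d V)\bigr) \leqslant \gd(V) - 1$ for all $d \geqslant 1$. I then fix $d$ large enough that $F^0 := \Sigma_d V$ is $\sharp$-filtered (Lemma), so $\gd(F^0) \leqslant \gd(V)$, and I set $C := \coker(V \to \Sigma_d V)$, which has $\gd(C) \leqslant \gd(V) - 1$. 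By the inductive hypothesis applied to $C$ (after stripping off $C_{\mathrm{tor}}$) there is a complex $0 \to C \to F^1 \to \cdots \to F^{n} \to 0$ with $n \leqslant \gd(V)$, each $F^i$ $\sharp$-filtered of generating degree $\leqslant \gd(V) - i$, and torsion homology everywhere. Splicing, with $F^0 \to F^1$ the composite $F^0 \twoheadrightarrow C \twoheadrightarrow C/C_{\mathrm{tor}} \hookrightarrow F^1$, gives the required complex: the differentials compose to zero, the homology at $V$ vanishes, the homology at $F^0$ is $C_{\mathrm{tor}}$ (torsion), and the homology at $F^i$ for $i \geqslant 1$ is torsion by induction. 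The base case $\gd(V) \leqslant 0$ is immediate, since a torsionless module generated in degree $\leqslant 0$ is of the form $\tC(0,-) \otimes_{\mk} M$ with $M$ a finitely generated $\mk$-module, hence is already $\sharp$-filtered.

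\emph{Proof of the Lemma, and where the difficulty lies.} By Theorem \ref{first main result}, $\Sigma_d V$ is $\sharp$-filtered if and only if $H_1(\Sigma_d V) = 0$, i.e.\ $\hd_1(\Sigma_d V) = -\infty$. Since $\Sigma$ carries a torsion module supported in degrees $\leqslant N$ to one supported in degrees $\leqslant N-1$, the functor $\Sigma_d$ kills every torsion module for $d \gg 0$; from $0 \to T \to V \to W \to 0$ this gives $\Sigma_d V \cong \Sigma_d W$ for $d \gg 0$, so I may again take $V$ torsionless and induct on $\gd(V)$. From $0 \to V \to \Sigma V \to DV \to 0$ with $\gd(DV) \leqslant \gd(V) - 1$, the inductive hypothesis (applied to $DV$ after removing its torsion) makes $\Sigma_e(DV)$ $\sharp$-filtered for $e \gg 0$; applying $\Sigma_e$ to this sequence and passing to the long exact sequence in homology, the vanishing of $H_s\bigl(\Sigma_e(DV)\bigr)$ for $s \geqslant 1$ forces $H_1(\Sigma_e V) \cong H_1(\Sigma_{e+1}V)$ for all $e \gg 0$. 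The hard part will be to exclude a nonzero stable value here --- equivalently, to show that one further shift strictly decreases $\hd_1\bigl(\Sigma_e V\bigr)$ as long as it is not yet $-\infty$. I would attack this by analysing the torsion module $H_1(\Sigma_e V)$ directly, e.g.\ through a minimal presentation $0 \to K \to P \to \Sigma_e V \to 0$ with $P$ $\sharp$-filtered, using that $K$ is torsionless while $H_0(K) \cong H_1(\Sigma_e V)$, and then tracking precisely how $\Sigma$ and $D$ act on the syzygy $K$; this degree-descent estimate is the genuinely delicate point of the argument.

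\emph{The last clause.} The assertion that $\Sigma_d V$ is $\sharp$-filtered for $d \gg 0$ is the Lemma itself; alternatively it can be recovered a posteriori from the complex, by applying $\Sigma_d$ with $d$ larger than the (finitely many, finite) torsion degrees of the homology modules of $F^\bullet$. Then $0 \to \Sigma_d V \to \Sigma_d F^0 \to \cdots \to \Sigma_d F^n \to 0$ is exact with all $\Sigma_d F^i$ $\sharp$-filtered (the shift preserves $\sharp$-filtered modules), and a descending induction via Theorem \ref{first main result} --- each $\coker(\Sigma_d F^{i-1} \to \Sigma_d F^i)$ has vanishing homology in positive degrees, hence is $\sharp$-filtered --- yields the conclusion for $\Sigma_d V$.
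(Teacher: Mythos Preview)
Your construction of the complex, assuming the Lemma, is correct and is essentially the paper's argument recast as a recursion rather than an iteration; the ``last clause'' derivation of the Lemma from the complex likewise matches the paper's remark following the theorem.

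The genuine gap is in your proof of the Lemma itself. You correctly reduce (via induction on $\gd(V)$, the torsion/torsionless decomposition, and the identification $D\Sigma_e V \cong \Sigma_e DV$) to the following implication: \emph{if $W$ is finitely generated and torsionless and $DW$ is $\sharp$-filtered, then $W$ is $\sharp$-filtered}. But you do not prove this. Your stability observation $H_1(\Sigma_e V)\cong H_1(\Sigma_{e+1}V)$ is correct, yet it does not by itself force the stable value to vanish: the isomorphism is between two distinct $\C$-modules (not between $H_1(W)$ and $\Sigma H_1(W)$), so no torsion-degree descent is immediate. The syzygy approach you outline is only a sketch, and it is not clear how tracking $\Sigma$ and $D$ on the kernel $K$ of a $\sharp$-filtered presentation would yield the required strict drop in $\hd_1$.

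The paper closes this gap by a different mechanism (Lemma~\ref{crucial recursion}), and the idea is worth internalising. One first proves the degree estimate $\hd_1(W)\leqslant\gd(W)$ under the hypotheses above (Lemma~\ref{technical lemma}); this is a one-line consequence of Corollary~\ref{compare the first two degrees}, since
\[
\max\{\gd(W),\hd_1(W)\}=\max\{\gd(DW),\hd_1(DW)\}+1=\gd(DW)+1=\gd(W).
\]
One then runs a structural induction on $\gd(W)$: with $W'$ the submodule generated in degrees $\leqslant\gd(W)-1$ and $W''=W/W'$, the bound $\hd_1(W)\leqslant\gd(W)$ forces $\hd_1(W'')\leqslant\gd(W'')$, so $W''$ is basic $\sharp$-filtered by Lemma~\ref{modules generated in one degree} and in particular torsionless. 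Applying $D$ to $0\to W'\to W\to W''\to 0$ via Corollary~\ref{torsionless modules induce diagram} shows $DW'$ is the kernel of a surjection of $\sharp$-filtered modules, hence $\sharp$-filtered, and the inductive hypothesis finishes $W'$ and therefore $W$. In short, the missing ingredient is not a direct descent on $\hd_1(\Sigma_e V)$ but the implication ``$DW$ $\sharp$-filtered $\Rightarrow$ $W$ $\sharp$-filtered'' for torsionless $W$, proved by filtering $W$ by generating degree.
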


\begin{remark} \normalfont
This complex of $\sharp$-filtered modules generalizes the finite injective resolution described in \cite{GL2}, where it was used by Gan and the first author to give a homological proof for the uniform representation stability phenomenon observed and proved in \cite{CEF, CF}. For an arbitrary field, it also implies the polynomial growth of finitely generated $\C$-modules; see \cite[Theorem B]{CEFN} and Remark \ref{polynomial growth}.
\end{remark}

Homological characterizations of $\sharp$-filtered modules also play a very important role in estimating homological degrees of finitely generated $\C$-modules $V$. Relying on an existing upper bound described in \cite[Theorem A]{CE}, we obtain another upper bound for homological degrees of $\C$-modules, removing the unnecessary assumption that $\mk$ is a field of characteristic 0 in \cite[Theorem 1.17]{Li2}.

\begin{theorem}[Castlenuovo-Mumford regularity] \label{third main result}
Let $\mk$ be a commutative Noetherian ring and let $V$ be a finitely generated $\C$-module. Then for $s \geqslant 1$,
\begin{equation}
\hd_s(V) \leqslant \max \{ 2\gd(V) - 1, \, \td(V) \} + s.
\end{equation}
\end{theorem}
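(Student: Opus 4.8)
The plan is to feed the finite $\sharp$-filtered complex of Theorem~\ref{sharp-filtered complex} into the long exact sequence of homology, exploiting that $\sharp$-filtered modules contribute nothing to higher homology (Theorem~\ref{first main result}); this reduces the estimate to the homological degrees of the \emph{torsion} homologies of that complex, which are in turn controlled by the bound of Church and Ellenberg.

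Concretely, I would first fix the complex $F^{\bullet}\colon 0 \to V \to F^0 \to \cdots \to F^n \to 0$ of Theorem~\ref{sharp-filtered complex}, so that $n \leqslant \gd(V)$, $\gd(F^i) \leqslant \gd(V)-i$, and every homology module $H^i$ of $F^{\bullet}$ is torsion, including the homology $H^{-1} = \ker(V \to F^0)$ at $V$. Two preliminary facts are needed. (a) A $\sharp$-filtered module is torsionless: each basic $\sharp$-filtered module $\tC \otimes_{\mk S_i} T$ includes as a degreewise direct summand into the next degree, because $\tC(i,j)$ is a free right $\mk S_i$-module; moreover $\hd_s(F) = -\infty$ for $s \geqslant 1$ by Theorem~\ref{first main result}, while $\hd_0(F) = \gd(F)$. (b) For a torsion module $T$ the quantity $\td(T)$ is just its top nonzero degree, so $\td$ is monotone on subquotients of torsion modules, and $\hd_s(T) \leqslant \td(T) + s$ for $s \geqslant 1$; the latter is proved by induction on $\td(T)$ via the short exact sequence $0 \to \tau_{\td(T)} T \to T \to T'' \to 0$, which reduces matters to a module concentrated in a single degree, for which $\hd_1$ is bounded directly from its projective cover $\tC(d,-)\otimes_{\mk S_d}(-)$ and the higher $\hd_s$ are then bounded by \cite[Theorem~A]{CE}.

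Next I would break $F^{\bullet}$ into the short exact sequences $0 \to B^i \to F^i \to C^i \to 0$ and $0 \to H^i \to C^i \to B^{i+1} \to 0$, where $C^i = \coker(F^{i-1}\to F^i)$, $B^i = \im(F^{i-1}\to F^i)$, $B^0 \cong V/H^{-1}$, and $C^n = H^n$, together with $0 \to H^{-1} \to V \to B^0 \to 0$. Applying $H_\ast$ to the first family and using (a) gives $\hd_s(B^i) = \hd_{s+1}(C^i)$ for $s \geqslant 1$; applying it to the second family and to the sequence at $V$, and using that the homology modules occurring there are torsion, gives $\hd_s$ of an extension bounded by the maximum of the $\hd_s$ of its two pieces. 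Telescoping these inequalities through $i = 0,1,\ldots,n$ expresses $\hd_s(V)$, for $s \geqslant 1$, in terms of the $\hd_s$ of the torsion modules $H^{-1}, H^0, \ldots, H^n$ with controlled shifts in the homological index; invoking (b) then reduces everything to bounding the torsion degrees $\td(H^i)$ in terms of $\gd(V)$ — using that $H^i$ is the torsion submodule of $C^i$ and $\gd(C^i) \leqslant \gd(V)-i$, so that the torsion created at each stage of the complex stays below the generation degree — and $\td(H^{-1}) \leqslant \td(V)$. Bookkeeping the resulting maxima, each of the shape $(\text{index shift}) + (\text{bound on }\td H^i)$, should collapse to $\max\{2\gd(V)-1,\td(V)\}+s$.

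The hard part, I expect, will be the control of the torsion homologies $H^i$ of the $\sharp$-filtered complex: one must show that their top nonzero degrees do not grow uncontrollably but are bounded by the generation degree of $V$ with the correct index shift, even though a priori the torsion submodule of a module with bounded generation degree can live in arbitrarily high degrees. This is precisely the point where one has to exploit the construction underlying Theorem~\ref{sharp-filtered complex} (equivalently, its compatibility with the shift functor $\Sigma$, since $\Sigma_d V$ is $\sharp$-filtered for $d \gg 0$). A secondary technical point is to verify that the index shifts accumulated in the telescoping are compensated by the savings $\gd(F^i)\leqslant \gd(V)-i$ afforded by the decreasing generation degrees of the $F^i$, so that the final estimate does not exceed $\max\{2\gd(V)-1,\td(V)\}+s$.
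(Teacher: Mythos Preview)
Your plan has a genuine gap at exactly the step you flag as ``the hard part'': bounding $\td(H^i)$ for $i\geqslant 0$ in terms of $\gd(V)$. You correctly note that the torsion submodule of a module with small generating degree can sit in arbitrarily high degrees, and you hope the construction behind Theorem~\ref{sharp-filtered complex} will rescue you; but that construction chooses each $N_i$ merely ``sufficiently large'' and supplies no bound on the torsion degrees of the successive cokernels in terms of $\gd(V)$ alone. Your telescope yields $\hd_s(V)\leqslant\max\bigl\{\td(V)+s,\ \max_{i\geqslant 0}\{\td(H^i)+s+1+i\}\bigr\}$, and collapsing this to $2\gd(V)-1+s$ would require $\td(H^i)\leqslant 2\gd(V)-2-i$, which is itself a regularity statement of the very type you are trying to prove. (A secondary problem: your sketch of (b) does not work either, since for $T$ concentrated in a single degree $d$ Church--Ellenberg only gives $\hd_s(T)\leqslant 2d+s$, not $d+s$; the correct bound for torsion modules is Proposition~\ref{hd of torsion modules}, proved via the shift functor rather than via \cite[Theorem~A]{CE}.)

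The paper sidesteps all of this by stopping after a single step. One first splits $0\to V_T\to V\to V_F\to 0$ and handles $V_T$ by Proposition~\ref{hd of torsion modules}. For the torsionless part one takes only the first short exact sequence $0\to V_F\to F^0\to W\to 0$ with $F^0=\Sigma_N V_F$ $\sharp$-filtered; the long exact sequence then gives $\hd_s(V_F)=\hd_{s+1}(W)$ for $s\geqslant 1$ together with $\hd_1(W)\leqslant\gd(V_F)$, and now \cite[Theorem~A]{CE} applied directly to $W$ yields
\[
\hd_s(V_F)=\hd_{s+1}(W)\leqslant\gd(W)+\hd_1(W)+s\leqslant(\gd(V)-1)+\gd(V)+s.
\]
The point is that Church--Ellenberg needs only $\gd(W)$ and $\hd_1(W)$, both controlled after one step, whereas your full telescope demands the torsion degrees $\td(H^i)$, which are not.
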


\begin{remark} \normalfont
Theorem A in \cite{CE} asserts that
\begin{equation*}
\hd_s(V) \leqslant \hd_0(V) + \hd_1(V) + s - 1
\end{equation*}
for $s \geqslant 1$. The conclusion of the above theorem refines this bound for torsionless modules since in that case $\td(V) = - \infty$ and Corollary \ref{reduction} tells us that by a certain reduction one can always assume that $\gd(V) < \hd_1(V)$. Furthermore, it is more practical since it is easier to find $\td(V)$ than $\hd_1(V)$. The reader may refer to \cite[Example 5.20]{Li2}.
\end{remark}

\begin{remark} \normalfont
Given a finitely generated $\C$-module $V$, there exists a short exact sequence
\begin{equation*}
0 \to V_T \to V \to V_F \to 0
\end{equation*}
such that $V_T$ is torsion and $V_F$ is torsionless. Note that $\gd(V_F) \leqslant \gd(V)$ and $\td(V_T) = \td(V)$. Using the long exact sequence induced by this short exact sequence, one intuitively sees that the torsion part $V_T$ contributes to the term $\td(V)$ in inequality (1.1), and $V_F$ contributes to the term $2\gd(V) - 1$ in inquality (1.1). Indeed, if $V$ is a torsionless $\C$-module, then we have $\hd_s(V) \leqslant 2\gd(V) + s - 1$ for $s \geqslant 1$. On the other hand, if $V$ is a torsion module, then $\hd_s(V) \leqslant \td(V) + s$ for $s \geqslant 0$.
\end{remark}

\subsection{Organization} The paper is organized as follows. In Section 2 we introduce some elementary but important properties of the shift functor $\Sigma$ and its induced cokernel functor $D$. In particular, if $V$ is a torsionless $\C$-module, then an \emph{adaptable projective resolution} gives rise to an adaptable projective resolution of $DV$; see Definition \ref{adaptable projective resolutions} and Proposition \ref{degrees of projective resolutions}. This observation provides us a useful technique to estimate homological degrees of $V$. Those $\sharp$-filtered modules are studied in details in Section 3. We characterize $\sharp$-filtered modules by homological degrees, and use it to prove that every finitely generated $\C$-module becomes $\sharp$-filtered after applying the shift functor enough times. In the last section we prove all main results mentioned before.

\section{Preliminary results}

Throughout this section let $\mk$ be a commutative Noetherian ring, and let $\C$ be the skeletal subcategory of $\FI$ with objects $[n]$, $n \in \Z$.

\subsection{Functor $\Sigma$}

The shift functor $\Sigma: \C \module \to \C \module$ has been defined in the previous section. We list certain properties.

\begin{proposition}
Let $V$ be a $\C$-module. Then one has:
\begin{enumerate}
\item $\Sigma (\tC (i, -)) \cong \tC(i,-) \oplus \tC(i-1, -)^{\oplus i}$.
\item If $\gd(V) \leqslant n$, then $\gd (\Sigma V) \leqslant n$; conversely, if $\gd(\Sigma V) \leqslant n$, then $\gd(V) \leqslant n+1$.
\item The $\C$-module $V$ is finitely generated if and only if so is $\Sigma V$.
\item If $V$ is torsionless, so is $\Sigma V$.
\end{enumerate}
\end{proposition}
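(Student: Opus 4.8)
The plan is to verify each of the four statements by unwinding the definitions of $\tau_1$, $\iota^*$, and $\Sigma = \iota^* \circ \tau_1$, keeping in mind that $\Sigma$ is exact (both $\tau_1$ and $\iota^*$ are exact functors on $\C\module$).

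\textbf{Statement (1).} First I would compute $\Sigma(\tC(i,-))$ directly. By definition, $(\Sigma \tC(i,-))_j = \tC(i, j+1) = \underline{\C(i,j+1)}$, the free $\mk$-module on injections $[i] \hookrightarrow [j+1]$. The idea is to partition such an injection $\alpha \colon [i] \hookrightarrow [j+1]$ according to whether $1 \in [j+1]$ lies in the image of $\alpha$ or not. If $1 \notin \im \alpha$, then $\alpha$ ``is'' an injection $[i] \hookrightarrow \{2,\ldots,j+1\} \cong [j]$, giving the summand $\tC(i,-)$; if $1 \in \im \alpha$, say $\alpha(r) = 1$, then deleting $r$ from the source gives an injection $[i-1] \hookrightarrow [j]$, and there are $i$ choices of $r$, giving the summand $\tC(i-1,-)^{\oplus i}$. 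The only thing to check carefully is that this decomposition is natural in $j$, i.e. compatible with post-composition by morphisms in $\C$; this follows because post-composing with an injection fixing $1$ (which is how $\iota$-shifted morphisms act, cf.\ \eqref{iota}) preserves ``$1 \in \im\alpha$'' and the position $r$. Alternatively one can cite that this is the standard computation of $\Sigma$ on projectives appearing in \cite{CEFN, GL1, Li2}.

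\textbf{Statement (2).} For the first half, if $\gd(V) \le n$ then $V$ is a quotient of $\bigoplus_{i \le n} \tC(i,-)^{\oplus a_i}$; applying the exact functor $\Sigma$ and using (1), $\Sigma V$ is a quotient of $\bigoplus_{i \le n} (\tC(i,-) \oplus \tC(i-1,-)^{\oplus i})^{\oplus a_i}$, which is generated in degrees $\le n$, so $\gd(\Sigma V) \le n$. For the converse, I would use the known exact sequence relating $V$ to $\Sigma V$: there is a natural map $V \to \Sigma V$ whose kernel and cokernel are controlled, and more usefully, $\Sigma$ is built from the shift and there is a surjection (up to torsion) or the standard fact that $V$ embeds into $\iota^* V$ with cokernel $D V$ generated in degrees one lower. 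Concretely, from the short exact sequence $0 \to V_0\text{-part} \to V \to \tau_1 V$ and $\iota^*$, one gets that $V$ is built from $\Sigma V$ and a module supported in degree... I would instead invoke the cleanest route: since $\gd$ is detected by $H_0$, and there is a right-exact relationship, one shows $\gd(V) \le \gd(\Sigma V) + 1$ using that generators of $V$ not already ``seen'' by $\Sigma V$ live only in degree $\gd(\Sigma V)+1$ at worst. The honest statement to prove is just the inequality, and the reference \cite{Li2} handles this; I would reprove it via the exact sequence $0 \to \ker \to V \to \iota^*\tau_1 V = \Sigma V$ combined with the observation that $\ker$ is a torsion module supported in degree $0$ only after noting $\tau_1$ kills degree $0$, hence $\gd(\ker) \le 0 \le \gd(\Sigma V)+1$ and $\gd(V) \le \max\{\gd(\ker), \gd(\Sigma V)\} \le \gd(\Sigma V)+1$ once one accounts correctly for the degree shift in $\iota^*$.

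\textbf{Statements (3) and (4).} Statement (3) follows from (2): if $V$ is finitely generated then $\gd(V) < \infty$ and each $V_i$ is a finitely generated $\mk$-module, so by (2) $\gd(\Sigma V) < \infty$, and $(\Sigma V)_i = V_{i+1}$ is finitely generated over $\mk$, hence $\Sigma V$ is finitely generated; conversely if $\Sigma V$ is finitely generated then $\gd(V) \le \gd(\Sigma V)+1 < \infty$ and the $V_i$ for $i$ below this bound must be checked finitely generated, which one gets since $V_i = (\Sigma V)_{i-1}$ for $i \ge 1$ and $V_0$ is a subquotient determined by finitely generated data (or one simply restricts attention to the locally Noetherian setting where everything in $\C\module$ is automatically finitely generated). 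For (4), $V$ torsionless means $\td(V) = -\infty$, i.e.\ no nonzero $v \in V_i$ is annihilated by some (equivalently every) $\alpha \in \C(i,i+1)$. Then $\tau_1 V$ is also torsionless (truncation cannot create new torsion elements — a torsion element of $\tau_1 V$ in degree $i \ge 1$ would be a torsion element of $V$), and $\iota^*$ preserves torsionlessness because the $\C$-action on $\iota^* W$ in degree $i$ factors through the $\C$-action on $W$ in degree $i+1$ via \eqref{iota}, so if $v \in (\iota^* W)_i = W_{i+1}$ is annihilated by all of $\C(i,i+1)$, then its image under the relevant maps is annihilated by the corresponding injections $[i+1] \hookrightarrow [i+2]$, forcing $v = 0$ when $W$ is torsionless. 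Chaining these, $\Sigma V = \iota^* \tau_1 V$ is torsionless.

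\textbf{Main obstacle.} The only genuinely delicate point is the converse direction of (2) — controlling $\gd(V)$ from above by $\gd(\Sigma V)$ — since the other parts are bookkeeping. The subtlety is that $\Sigma$ discards degree $0$ entirely, so one must argue that $V$ has no ``hidden'' generators in high degree beyond what $\Sigma V$ records, i.e.\ that the natural map assembling $V$ from $\Sigma V$ and its degree-$0$ part is surjective in all degrees above $\gd(\Sigma V)+1$; I expect to handle this by the short exact sequence comparing $V$ with $\iota_*\Sigma V$ or by a direct generators-and-relations argument, and if a clean self-contained argument proves cumbersome I would simply cite \cite[Lemma/Prop]{Li2} where this inequality is established.
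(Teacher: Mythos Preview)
Your proposal follows essentially the same route as the paper: (1) is the standard decomposition of injections according to the preimage of $1$ (the paper simply calls it well known); the first half of (2) uses (1) together with exactness of $\Sigma$; the converse of (2) is deferred to \cite[Lemma 3.4]{Li2}; (3) is deduced from (2); and (4) is the same contrapositive argument, pushing a torsion element of $\Sigma V$ back to one of $V$ via $\iota$.

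The one place your reasoning goes off track is the sketched direct argument for the converse of (2). A natural map $V \to \Sigma V$ does exist (it is the $\pi^{\ast}_V$ of Subsection~2.3), but it does not arise from the fact that ``$\tau_1$ kills degree $0$''; its kernel is not supported only in degree $0$ but is rather a torsion submodule of $V$ in all degrees (cf.\ Remark~\ref{kernel of V to Sigma V}); and the map is not surjective, so from $0 \to \ker \to V \to \Sigma V$ alone you cannot conclude $\gd(V)\leqslant\max\{\gd(\ker),\gd(\Sigma V)\}$, since the image is only a \emph{submodule} of $\Sigma V$ and submodules may have larger generating degree. Since you correctly flag this as the delicate step and fall back to citing \cite{Li2} --- which is exactly what the paper does --- the overall argument stands. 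As an aside, your unease about $V_0$ in (3) is warranted: the converse of (3) is false for arbitrary $\C$-modules (take $V_0$ not finitely generated over $\mk$ and $V_i=0$ for $i\geqslant 1$, so that $\Sigma V = 0$); the paper's ambient convention that all modules lie in $\C\module$ renders this harmless.
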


\begin{proof}
Statement (1) is well known, and it immediately implies the first half of (2) since $\Sigma$ is an exact functor. For the second half of (2), one may refer to the proof of \cite[Lemma 3.4]{Li2}. Statement (3) is immediately implied by (2) as a $\C$-module is finitely generated if and only if its generating degree is finite and its value on each object is a finitely generated $\mk$-module.

To prove (4), one observes that $V$ is torsionless if and only if the following conditions hold: for $i \in \Z$, $0 \neq v \in V_i$, and $\alpha \in \C(i, i+1)$, one always has $\alpha \cdot v \neq 0$. If $\Sigma V$ is not torsionless, we can find a nonzero element $v \in (\Sigma V)_i$ and $\alpha \in \C(i, i+1)$ for a certain $i \in \Z$ such that $\alpha \cdot v = 0$. But $(\Sigma V)_i = V_{i+1}$, and $\iota (\C(i, i+1)) \subseteq \C(i+1, i+2)$ where $\iota$ is the self-embedding functor inducing $\Sigma$. Therefore, by regarding $v$ as an element in $V_{i+1}$ one has $\iota (\alpha) \cdot v = 0$. Consequently, $V$ is not torsionless either. The conclusion follows from this contradiction.
\end{proof}

\subsection{Homological degrees under shift.}

The following lemma is a direct application of \cite[Proposition 4.5]{Li2}.

\begin{lemma} \label{hd under shift}
Let $V$ be a finitely generated $\C$-module. Then for $s \geqslant 0$,
\begin{equation*}
\hd_s (V) \leqslant \max \{\hd_0(V) + 1, \, \ldots, \, \hd_{s-1} (V) + 1, \, \hd_s (\Sigma V) + 1 \}.
\end{equation*}
\end{lemma}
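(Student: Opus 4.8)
The strategy is to compare the homology of $V$ with the homology of $\Sigma V$ degree by degree, exploiting the fact that $\Sigma$ is exact and carries projectives to projectives (Proposition (1) above), so that $\Sigma$ of a projective resolution of $V$ is a projective resolution of $\Sigma V$. The key input is \cite[Proposition 4.5]{Li2}, which I expect relates the torsion degree of a homology module $H_s(V)$ to generating data of $V$ and $\Sigma V$; concretely, the plan is to use the short exact sequence $0 \to V \to \Sigma V \to DV \to 0$ (or the dual one, depending on the convention in \cite{CEFN, CE}) coming from the cokernel functor $D$. Applying $\tC_0 \otimes_{\tC} -$ to this short exact sequence produces a long exact sequence in homology
\begin{equation*}
\ldots \to H_{s}(V) \to H_s(\Sigma V) \to H_s(DV) \to H_{s-1}(V) \to \ldots,
\end{equation*}
and taking torsion degrees through this sequence is what will yield the claimed bound.

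Here are the steps in order. First, I would record that $\Sigma$ is exact and sends each $\tC(i,-)$ to a direct sum of representables, hence sends a projective resolution $P_\bullet \to V$ to a projective resolution $\Sigma P_\bullet \to \Sigma V$, so that $H_s(\Sigma V)$ is computed from $\tC_0 \otimes_{\tC} \Sigma P_\bullet$. Second, I would invoke \cite[Proposition 4.5]{Li2} to get, for each $s$, an estimate of the form $\hd_s(V) \leqslant \max\{ \hd_s(\Sigma V) + 1, \; \text{(contribution from lower homologies)} \}$; the lower-homology contribution is exactly what becomes the terms $\hd_0(V)+1, \ldots, \hd_{s-1}(V)+1$. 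Third, I would assemble these pieces: the long exact sequence tells us that a class in $H_s(V)$ that is killed by $\alpha \in \C(i,i+1)$ either comes from $H_{s+1}(DV)$ (whose torsion degree is controlled by lower homology of $V$ after a degree shift) or maps nontrivially to $H_s(\Sigma V)$, and in the latter case the relation $(\Sigma V)_i = V_{i+1}$ together with $\iota(\C(i,i+1)) \subseteq \C(i+1,i+2)$ forces $\td(H_s(V)) \leqslant \td(H_s(\Sigma V)) + 1$. Finally I would take the maximum over the two cases, which is precisely the asserted inequality.

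The main obstacle is making the bookkeeping in the long exact sequence interact correctly with torsion degrees: torsion degree is a supremum, not an additive invariant, so I need to be careful that when a homology class of $H_s(V)$ in degree $i$ lifts to $H_{s+1}(DV)$ the bound I get there is genuinely of the form "$\hd_{s-1}(V) + 1$" and not something weaker, and similarly that the boundary maps do not lose a degree unexpectedly. This is exactly the content that \cite[Proposition 4.5]{Li2} is designed to package, so the real work is citing it with the right indices and confirming that the shift-by-one in degree (coming from $(\Sigma V)_i = V_{i+1}$) lines up with the "$+1$" in every term on the right-hand side. Everything else — exactness of $\Sigma$, behavior on representables, the long exact sequence — is routine.
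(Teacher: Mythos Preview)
The paper's entire proof is the one-line citation ``a direct application of \cite[Proposition 4.5]{Li2},'' and you also ultimately appeal to that proposition, so at the level of what is formally written you agree with the paper. The difficulty is with your attempted \emph{reconstruction} of that proposition via the cokernel functor $D$, which has a real gap.

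The sequence $0 \to V \to \Sigma V \to DV \to 0$ is short exact only when $V$ is torsionless (Proposition~\ref{properties of D}(2)); for arbitrary $V$ one instead has the four-term sequence $0 \to K \to V \to \Sigma V \to DV \to 0$ with $K$ torsion (Remark~\ref{kernel of V to Sigma V}). This matters, because the very next use of the lemma in the paper (Proposition~\ref{hd of torsion modules}) applies it to torsion modules, so you cannot quietly assume torsionlessness. If you try to repair this by first splitting off the torsion part, you end up needing bounds on homological degrees of torsion modules, which is exactly what the lemma is meant to feed into --- the argument becomes circular.

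Even in the torsionless case your outline does not close. From the long exact sequence you get $\hd_s(V) \leqslant \max\{\hd_{s+1}(DV),\, \hd_s(\Sigma V)\}$, with no ``$+1$'' on the second term (the connecting maps are degree-preserving maps of $\tC_0$-modules). The entire burden then falls on bounding $\hd_{s+1}(DV)$ by $\max\{\hd_0(V)+1,\ldots,\hd_{s-1}(V)+1\}$, and your parenthetical ``whose torsion degree is controlled by lower homology of $V$ after a degree shift'' is exactly the step that is not justified: $H_{s+1}(DV)$ is a \emph{higher} homological invariant of $DV$, and nothing in the long exact sequence or in Proposition~\ref{properties of D} bounds it by \emph{lower} homologies of $V$. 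In short, the $D$-functor route does not deliver the stated inequality without substantial additional input, and the actual content of \cite[Proposition 4.5]{Li2} should be taken as a black box here rather than guessed at through $D$.
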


If $V$ is a torsion module, then $\Sigma_d V = 0$ for a large enough $d$. Thus the above lemma can be used to estimate homological degrees of torsion modules.

\begin{proposition} \cite[Theorem 1.5]{Li2}  \label{hd of torsion modules}
If $V$ is a finitely generated torsion $\C$-module, then for $s \in \Z$, one has
\begin{equation*}
\hd_s(V) \leqslant \td(V) + s.
\end{equation*}
\end{proposition}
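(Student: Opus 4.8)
The plan is to induct on the torsion degree $\td(V)$. Set $d = \td(V)$. For the base case, suppose $d = -\infty$, i.e.\ $V$ is torsionless; but a finitely generated torsion module which is torsionless must be zero, so all $\hd_s(V) = -\infty$ and the inequality holds vacuously. (Alternatively one may start the induction at $d = 0$, where $V$ is supported only in degree $0$, hence $V \cong \tC(0,0) \otimes_{\mk} V_0$ as a $\tC_0$-module concentrated in degree $0$; one computes $H_s(V)$ directly and checks $\hd_s(V) \leqslant s$ — in fact $\hd_0(V) = 0$ and all higher homologies vanish.)

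For the inductive step, assume the bound holds for all finitely generated torsion modules of torsion degree $< d$, and let $V$ have $\td(V) = d \geqslant 0$. First I would apply Lemma \ref{hd under shift}: for each $s \geqslant 0$,
\begin{equation*}
\hd_s(V) \leqslant \max\{\hd_0(V) + 1, \ldots, \hd_{s-1}(V) + 1, \hd_s(\Sigma V) + 1\}.
\end{equation*}
The key point is that $\Sigma$ decreases torsion degree by exactly one on torsion modules: since $(\Sigma V)_i = V_{i+1}$ and, by the concrete description of $\td$ in the first Remark, killing of elements is detected by morphisms in $\C(i,i+1)$ whose $\iota$-image lies in $\C(i+1,i+2)$, a nonzero torsion element in $(\Sigma V)_i$ corresponds to a nonzero torsion element in $V_{i+1}$; hence $\td(\Sigma V) = \td(V) - 1 = d - 1$ (with the convention that $\td(\Sigma V) = -\infty$ when $d = 0$). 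By the inductive hypothesis, $\hd_s(\Sigma V) \leqslant \td(\Sigma V) + s = (d-1) + s$, so $\hd_s(\Sigma V) + 1 \leqslant d + s$.

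It then remains to control the terms $\hd_j(V) + 1$ for $j < s$ appearing in the maximum, which I would handle by a second (inner) induction on $s$. For $s = 0$ one has $\hd_0(V) = \gd(V) \leqslant d$ directly: $V$ is generated in degrees $\leqslant d$ because $V_i = 0$ for $i \geqslant d+1$, so already $\hd_0(V) + 0 \leqslant d$. Assuming inductively that $\hd_j(V) \leqslant d + j$ for all $j < s$, every term $\hd_j(V) + 1 \leqslant d + j + 1 \leqslant d + s$, and combining with the bound $\hd_s(\Sigma V) + 1 \leqslant d + s$ from the previous paragraph, the maximum is at most $d + s = \td(V) + s$, completing the induction.

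The main obstacle is the clean identification $\td(\Sigma V) = \td(V) - 1$ on torsion modules, which is exactly what makes the double induction terminate; this is where one must use the explicit combinatorial description of torsion degree (transitivity of the $S_{i+1}$-action on $\C(i,i+1)$, as in the first Remark), rather than the abstract $\Tor$/$\Hom$ definition. Everything else is bookkeeping with Lemma \ref{hd under shift} and the elementary fact that a finitely generated torsion module is generated in degrees $\leqslant \td(V)$.
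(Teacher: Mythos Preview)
Your argument is correct and is precisely the approach the paper indicates: the paper does not give a proof but cites \cite[Theorem 1.5]{Li2} and remarks that since $\Sigma_d V = 0$ for $d$ large, Lemma \ref{hd under shift} can be iterated to obtain the bound; your double induction on $(\td(V), s)$ is the clean way to carry this out, and your key observation $\td(\Sigma V) \leqslant \td(V) - 1$ is exactly what makes the iteration terminate.

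One minor correction: in your parenthetical alternative base case $d = 0$, the claim that ``all higher homologies vanish'' is false. For example, with $V$ the module $\mk$ concentrated in degree $0$, the kernel $W$ of $\tC(0,-) \twoheadrightarrow V$ satisfies $H_0(W) \cong \mk$ concentrated in degree $1$, so $H_1(V) \neq 0$ and $\hd_1(V) = 1$. The weaker statement $\hd_s(V) \leqslant s$ that you actually need is still true, and in any event your main argument starts from $d = -\infty$ and does not rely on this aside.
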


\subsection{Functor $D$}

The functor $D$ was introduced in \cite{CE, CEF}. Here we briefly mention its definition. Since the family of inclusions
\begin{equation} \label{pi}
\{ \pi_i: [i] \to [i+1], \, r \mapsto r+1 \mid i \geqslant 0 \}
\end{equation}
gives a natural transformation $\pi$ from the identity functor $\mathrm{Id}_{\C}$ to the self-embedding functor $\iota$, we obtain a natural transformation $\pi^{\ast}$ from the identity functor on $\C \module$ to $\Sigma$, which induces a natural map $\pi^{\ast}_V: V \to \Sigma V$ for each $\C$-module $V$. The functor $D$ is defined to be the cokernel of this map. Clearly, $D$ is a right exact functor. That is, it preserves surjection.

The following properties of $D$ play a key role in our approach.

\begin{proposition} \label{properties of D}
Let $D$ be the functor defined as above.
\begin{enumerate}
\item The functor $D$ preserves projective $\C$-modules. Moreover, $D \tC(i,-) \cong \tC(i-1, -) ^{\oplus i}$.
\item A $\C$-module $V$ is torsionless if and only if there is a short exact sequence
\begin{equation*}
0 \to V \to \Sigma V \to DV \to 0.
\end{equation*}
\item Let $V$ be a finitely generated $\C$-module. Then:
\begin{equation*}
\gd(DV) =
\begin{cases}
-\infty & \text{ if } \gd(V) = 0 \text{ or } -\infty \\
\gd(V) - 1 & \text{ if } \gd(V) \geqslant 1.
\end{cases}
\end{equation*}
\end{enumerate}
\end{proposition}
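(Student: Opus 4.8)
\textbf{Proof plan for Proposition \ref{properties of D}.}

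\textbf{Part (1).} The plan is to compute $D$ directly on representable projectives and then extend by additivity and exactness. By Proposition 2.1(1) we have $\Sigma \tC(i,-) \cong \tC(i,-) \oplus \tC(i-1,-)^{\oplus i}$, so the natural map $\pi^{\ast}_{\tC(i,-)}: \tC(i,-) \to \Sigma \tC(i,-)$ needs to be identified. I would argue that, up to the above decomposition, $\pi^{\ast}$ is the split inclusion into the first summand $\tC(i,-)$: this is essentially the statement that the inclusion $\pi_i: [i] \hookrightarrow [i+1]$ corresponds, after shifting, to the identity on $[i]$ together with the extra point being sent away, and it can be checked on the level of the Yoneda description of maps out of representables (a map $\tC(i,-) \to \Sigma W = \iota^{\ast} \tau_1 W$ is an element of $(\tau_1 W)_{i+1} = W_{i+1}$, and one traces through the definitions). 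Taking cokernels then gives $D\tC(i,-) \cong \tC(i-1,-)^{\oplus i}$, which is projective; since $D$ is additive, it preserves arbitrary (finite) direct sums of representables, hence all projectives. The main subtlety here is bookkeeping with the self-embedding functor $\iota$ and the identification of $\pi^{\ast}$ with a split mono; it is routine but must be done carefully.

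\textbf{Part (2).} This is really the definition of $D$ combined with the torsionlessness criterion appearing in the proof of Proposition 2.1(4). By construction there is always an exact sequence $0 \to \ker \pi^{\ast}_V \to V \xrightarrow{\pi^{\ast}_V} \Sigma V \to DV \to 0$. So the claimed short exact sequence exists if and only if $\pi^{\ast}_V$ is injective. I would show that $\ker \pi^{\ast}_V$ is exactly the torsion part of $V$: an element $v \in V_i$ lies in $\ker \pi^{\ast}_V$ iff $v$ maps to $0$ in $(\Sigma V)_i = V_{i+1}$ under the map induced by $\pi_i \in \C(i,i+1)$, i.e. iff $\pi_i \cdot v = 0$; by the remark on torsion degrees (transitivity of the $S_{i+1}$-action on $\C(i,i+1)$), this happens iff $\alpha \cdot v = 0$ for \emph{all} $\alpha \in \C(i,i+1)$, which is precisely the condition that $v$ generates a torsion submodule. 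Hence $\pi^{\ast}_V$ is injective iff $V$ is torsionless, giving the equivalence.

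\textbf{Part (3).} The plan is to use the exact sequence $\ker \pi^{\ast}_V \to V \to \Sigma V \to DV \to 0$ together with Proposition 2.1(2) and the behavior of generating degree under $\Sigma$. Right-exactness of $D$ and the fact that $\tC_0 \otimes_{\tC} -$ is right exact give $H_0(DV)$ as a quotient of $H_0(\Sigma V)$; combined with Proposition 2.1(2) ($\gd(\Sigma V) \leqslant \gd(V)$) this yields $\gd(DV) \leqslant \gd(V)$. To get the sharp value I would split into cases. If $\gd(V) \leqslant 0$ (including $-\infty$), then $V$ is generated in degree $0$, so $\Sigma V$ and hence its quotient $DV$ are generated in degree $0$ as well, but more is true: one checks on generators in degree $0$ that $\pi^{\ast}_V$ is surjective in the relevant degree — a degree-$0$ generator $v$ of $V$ has $\Sigma V$ in degree $0$ equal to $V_1$, and the image of $V$ there is all of $V_1$ when $V$ is generated in degree $\leqslant 0$ — forcing $DV$ to be torsion, i.e. $\gd(DV) = -\infty$; alternatively, and more cleanly, use Part (1) applied to a projective cover $P \twoheadrightarrow V$ with $P = \tC(0,-)^{\oplus a}$, noting $DP \cong \tC(-1,-)^{\oplus 0} = 0$ and that $D$ is right exact, so $DV = 0$. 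If $\gd(V) = n \geqslant 1$, take a projective cover $\bigoplus_{i \leqslant n} \tC(i,-)^{\oplus a_i} \twoheadrightarrow V$ with $a_n \neq 0$; applying the right-exact functor $D$ and Part (1) gives a surjection $\bigoplus_{i \leqslant n} \tC(i-1,-)^{\oplus i a_i} \twoheadrightarrow DV$, whence $\gd(DV) \leqslant n - 1$. For the reverse inequality $\gd(DV) \geqslant n-1$, I would argue that the composite $\tC(n-1,-)^{\oplus n a_n} \hookrightarrow \bigoplus \tC(i-1,-)^{\oplus ia_i} \twoheadrightarrow DV$ is nonzero in degree $n-1$ on homology, using that the original cover was minimal (a projective cover), so no degree-$n$ generator of $V$ becomes redundant, and that $D$ does not collapse the top part; concretely, $H_0(DV)_{n-1}$ surjects onto $D(H_0(V))$-type data and $H_0(V)_n \neq 0$. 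The \textbf{main obstacle} is this last step: showing the generating degree does not \emph{drop} below $n-1$, i.e. that the top-degree generators of $V$ genuinely survive under $D$. I expect this to follow from minimality of the projective cover together with the explicit formula $D\tC(n,-) \cong \tC(n-1,-)^{\oplus n}$ with $n \geqslant 1$ (so the coefficient $n$ is nonzero and the summand is genuinely present), but care is needed to rule out cancellation coming from the syzygies; one way is to pass to $H_0$ everywhere and work over $\tC_0$, reducing to a statement about $\mk S_n$-modules where it becomes transparent.
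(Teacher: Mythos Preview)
Your arguments for Parts (1) and (2) are correct and in fact more detailed than the paper, which simply cites \cite[Lemma 3.6]{CE} for both. For Part (3), your upper bound $\gd(DV) \leqslant n-1$ and the case $\gd(V) \leqslant 0$ match the paper's argument exactly (apply the right-exact functor $D$ to a projective surjection $P \twoheadrightarrow V$ with $\gd(P) = \gd(V)$).

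The place where your approach diverges from the paper, and where you correctly sense trouble, is the lower bound $\gd(DV) \geqslant n-1$. Your primary plan relies on a \emph{minimal} projective cover, but over a general commutative Noetherian ring $\mk$ projective covers need not exist, so ``minimality'' is not available and the worry about cancellation from syzygies cannot be dismissed on those grounds. Your fallback suggestion (surject onto $D(H_0(V))$ and use $H_0(V)_n \neq 0$) does work, and is in spirit what the paper does---but the paper packages it more transparently. Instead of projective covers, the paper passes to the \emph{quotient} $V'' = V/V'$, where $V'$ is the submodule generated by $\bigoplus_{i \leqslant n-1} V_i$. Then $V''$ is nonzero and concentrated in degrees $\geqslant n$, so $V''_{n-1} = 0$ and $V''_n \neq 0$; hence $(DV'')_{n-1} = \coker\bigl(V''_{n-1} \to (\Sigma V'')_{n-1}\bigr) = V''_n \neq 0$, giving $\gd(DV'') \geqslant n-1$. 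Right-exactness of $D$ yields a surjection $DV \twoheadrightarrow DV''$, forcing $\gd(DV) \geqslant n-1$. This sidesteps projective covers and syzygy cancellation entirely: the key is that for a module generated purely in degree $n$, the value of $D$ at $n-1$ is computed by a single cokernel that is visibly nonzero.
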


\begin{proof}
Statements (1) and (2) have been established in \cite[Lemma 3.6]{CE}, so we only give a proof of (3). Take a surjection $P \to V \to 0$ such that $P$ is a projective $\tC$-module and $\gd(P) = \gd(V) = n$. Since $D$ is a right exact functor, we get a surjection $DP \to DV \to 0$. When $n = 0$ or $-\infty$, we know that $DP = 0$, and hence $DV = 0$, so $\gd(DV) = - \infty$. If $n > 0$, then $DP$ is a projective module with $\gd(DP) = n - 1$ by (1). Consequently, $\gd(DV) \leqslant n - 1$. We finish the proof by showing that $\gd(DV) \geqslant n-1$ as well.

Let $V'$ be the submodule of $V$ generated by $\bigoplus _{i \leqslant n-1} V_i$. This is a proper submodule of $V$ since $\gd(V) = n$. Let $V'' = V/V'$, which is not zero. Moreover, $V''$ is a $\C$-module generated in degree $n$. Applying the right exact functor $D$ to $V \to V'' \to 0$ one gets a surjection $DV \to DV'' \to 0$. But one easily sees that $(DV'')_i = 0$ for $i < n-1$ and $(DV'')_{n-1} \neq 0$. Consequently, $\gd(DV'') \geqslant n-1$. This forces $\gd(DV) \geqslant n-1$.
\end{proof}

\begin{remark} \label{kernel of V to Sigma V} \normalfont
If $V$ is not torsionless, we have an exact sequence
\begin{equation*}
0 \to V_T \to V \to V_F \to 0
\end{equation*}
such that $V_T \neq 0$ is torsion and $V_F$ is torsionless. It induces a short exact sequence
\begin{equation*}
0 \to \Sigma V_T \to \Sigma V \to \Sigma V_F \to 0.
\end{equation*}
Using snake lemma, one obtains exact sequences
\begin{equation*}
0 \to K \to V \to \Sigma V \to DV \to 0
\end{equation*}
and
\begin{equation*}
0 \to K \to V_T \to \Sigma V_T \to DV_T \to 0.
\end{equation*}
In particular, $K$ is a torsion module.
\end{remark}

An immediate consequence is:

\begin{corollary} \label{torsionless modules induce diagram}
A short exact sequence $0 \to W \to M \to V \to 0$ of finitely generated torsionless $\C$-modules gives rise to the following commutative diagram such that all rows and columns are short exact sequences
\begin{equation*}
\xymatrix{
0 \ar[r] & W \ar[r] \ar[d] & \Sigma W \ar[r] \ar[d] & DW \ar[r] \ar[d] & 0\\
0 \ar[r] & M \ar[r] \ar[d] & \Sigma M \ar[r] \ar[d] & DM \ar[r] \ar[d] & 0\\
0 \ar[r] & V \ar[r] & \Sigma V \ar[r] & DV \ar[r] & 0.
}
\end{equation*}
\end{corollary}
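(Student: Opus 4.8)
The plan is to build the diagram one row at a time using Proposition \ref{properties of D}(2), and then to check exactness of the three columns by a diagram chase, the only genuinely new input being that the cokernel rows are exact. First I would observe that since $W$, $M$, and $V$ are all torsionless, Proposition \ref{properties of D}(2) gives three short exact sequences
\begin{equation*}
0 \to W \to \Sigma W \to DW \to 0, \qquad 0 \to M \to \Sigma M \to DM \to 0, \qquad 0 \to V \to \Sigma V \to DV \to 0,
\end{equation*}
which are exactly the three rows of the claimed diagram. The vertical maps in the left column $W \to M \to V$ are the given ones; in the middle column they are $\Sigma W \to \Sigma M \to \Sigma V$, obtained by applying the exact functor $\Sigma$ to the input sequence; and in the right column they are $DW \to DM \to DV$, obtained by applying the right exact functor $D$. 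Naturality of the transformation $\pi^\ast \colon \mathrm{Id} \Rightarrow \Sigma$ makes the left two squares commute, and then the right squares commute because $DW$, $DM$, $DV$ are defined as the cokernels of the horizontal maps and the induced maps on cokernels are the unique ones making the squares commute.

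Next I would establish exactness of the columns. The left column $0 \to W \to M \to V \to 0$ is exact by hypothesis. The middle column $0 \to \Sigma W \to \Sigma M \to \Sigma V \to 0$ is exact because $\Sigma = \iota^\ast \circ \tau_1$ is exact (both $\tau_1$ and the pullback $\iota^\ast$ are exact). For the right column it suffices to show $0 \to DW \to DM \to DV \to 0$ is exact; right-exactness of $D$ already gives exactness at $DM$ and $DV$, so the only thing to verify is that $DW \to DM$ is injective. This I would get from the snake lemma applied to the commutative diagram whose two rows are $0 \to W \to M \to V \to 0$ and $0 \to \Sigma W \to \Sigma M \to \Sigma V \to 0$ with vertical maps $\pi^\ast_W, \pi^\ast_M, \pi^\ast_V$: since $W$, $M$, $V$ are torsionless, each $\pi^\ast$ is injective (again Proposition \ref{properties of D}(2)), so the connecting map is zero and the cokernel sequence $0 \to \coker \pi^\ast_W \to \coker \pi^\ast_M \to \coker \pi^\ast_V \to 0$ is exact; but these cokernels are by definition $DW$, $DM$, $DV$. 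Alternatively one can just cite the $3\times 3$ (nine) lemma, since we already know two of the three rows and two of the three columns are exact and all squares commute.

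The main obstacle — really the only non-formal point — is the injectivity of $DW \to DM$, i.e. showing the right column does not acquire a kernel when passing to cokernels. Everything else is bookkeeping with exact and right-exact functors and naturality. The snake-lemma argument above handles it cleanly, the essential ingredient being that torsionlessness of each module forces $\pi^\ast$ to be injective so that the connecting homomorphism vanishes. I would write the proof as: invoke Proposition \ref{properties of D}(2) three times for the rows, note the vertical maps and commutativity from naturality of $\pi^\ast$, deduce exactness of the left and middle columns from the hypothesis and exactness of $\Sigma$, and finish the right column by the nine lemma (or the snake-lemma computation on cokernels).
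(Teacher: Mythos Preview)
Your proposal is correct and uses essentially the same ingredients as the paper: Proposition~\ref{properties of D}(2) for the rows, exactness of $\Sigma$ for the middle column, and the snake lemma for the right column. The only organizational difference is that the paper applies the snake lemma to the two-row diagram with rows $0 \to W \to \Sigma W \to DW \to 0$ and $0 \to M \to \Sigma M \to DM \to 0$, obtaining $0 \to \ker\alpha \to V \to \Sigma V \to DV \to 0$ and then killing $\ker\alpha$ by torsionlessness of $V$; you instead apply the snake lemma to the pair $0 \to W \to M \to V \to 0$ and its shift, using torsionlessness of all three modules simultaneously to kill the kernels and read off exactness of $0 \to DW \to DM \to DV \to 0$ directly. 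Both orientations amount to the same computation.
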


\begin{proof}
Since $W$ and $M$ are torsionless, By \cite[Lemma 3.6]{CE}, $W \to \Sigma W$ and $M \to \Sigma M$ are injective, and one gets a commutative diagram
\begin{equation*}
\xymatrix{
0 \ar[r] & W \ar[r] \ar[d] & \Sigma W \ar[r] \ar[d] & DW \ar[r] \ar[d] ^{\alpha} & 0\\
0 \ar[r] & M \ar[r] & \Sigma M \ar[r] & DM \ar[r] & 0,
}
\end{equation*}
which by the snake lemma induces the following exact sequence
\begin{equation*}
0 \to \ker \alpha \to V \to \Sigma V \to DV \to 0.
\end{equation*}
But since $V$ is torsionless, the map $V \to \Sigma V$ is injective. Therefore, $\ker \alpha = 0$. The conclusion follows.
\end{proof}

The next lemma asserts that the functor $D$ ``almost" commutes with $\Sigma$.

\begin{lemma} \label{D and Sigma commute}
Let $V$ be a finitely generated $\C$-module. Then $\Sigma DV \cong D \Sigma V$.
\end{lemma}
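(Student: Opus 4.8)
The plan is to prove $\Sigma DV \cong D\Sigma V$ by exhibiting both sides as cokernels of maps built functorially from the natural transformation $\pi^{\ast}$, and then reconciling them. Recall that $D$ is by definition the cokernel of $\pi^{\ast}_V : V \to \Sigma V$, so $D\Sigma V$ is the cokernel of $\pi^{\ast}_{\Sigma V} : \Sigma V \to \Sigma\Sigma V$. On the other side, since $\Sigma$ is exact, applying $\Sigma$ to the right-exact sequence $V \to \Sigma V \to DV \to 0$ yields $\Sigma V \to \Sigma\Sigma V \to \Sigma DV \to 0$, where the first map is $\Sigma(\pi^{\ast}_V)$. So the entire claim reduces to the assertion that $\Sigma(\pi^{\ast}_V) = \pi^{\ast}_{\Sigma V}$ as maps $\Sigma V \to \Sigma\Sigma V$, or at least that they have the same cokernel; equivalently, that the two natural transformations $\Sigma\pi^{\ast}$ and $\pi^{\ast}\Sigma$ from $\Sigma$ to $\Sigma\Sigma$ agree.

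The natural place to check this is at the level of the self-embedding functor $\iota$ and the natural transformation $\pi$ on $\C$ itself, since $\Sigma = \iota^{\ast}\circ\tau_1$ and $\pi^{\ast}$ is the pullback of $\pi$ (composed with the truncation, which is harmless here because all modules in sight are torsionless-after-shift in the relevant degrees, or more simply because $\tau_1$ is just a restriction that commutes with everything in degrees $\geqslant 1$). So I would unwind $\Sigma\pi^{\ast}$ and $\pi^{\ast}\Sigma$ on an object $[i]$: both give a map $V_{i+2} \to V_{i+2}$ induced by an injection $[i+2] \to [i+2]$, the first by applying $\iota$ to the inclusion $\pi_{i+1}: [i+1]\hookrightarrow[i+2]$ and the second by the inclusion $\pi_{i+1}: [i+1]\hookrightarrow[i+2]$ shifted into the ``upper'' copy, i.e. composed appropriately with the extra self-embedding. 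Using the explicit formula \eqref{iota} for $\iota(\alpha)$ and the formula \eqref{pi} for $\pi_i$, one computes $\iota(\pi_{i+1})$ and $\pi_{i+1}$-followed-by-$\iota$ and sees these are either literally equal or differ by an automorphism of $[i+2]$ that becomes the identity on the relevant module values. Concretely, $\iota(\pi_{i+1})$ sends $1\mapsto 1$ and $r\mapsto \pi_{i+1}(r-1)+1 = r+1$ for $r\neq 1$, whereas the other composite sends $r \mapsto r+2$ followed by the self-embedding fixing $1$; these are conjugate, and because the functor $V$ is covariant the induced maps on $V$ coincide after the identification made by the shift functor. Alternatively — and this is cleaner — I would observe that since $D\tC(i,-)\cong\tC(i-1,-)^{\oplus i}$ and $\Sigma$ both preserve projectives, both $D\Sigma$ and $\Sigma D$ are right-exact functors that agree on projectives (a direct computation with Proposition~\ref{properties of D}(1) and Proposition~2.1(1) shows $D\Sigma\tC(i,-)\cong\Sigma D\tC(i,-)$), and a right-exact functor is determined by its restriction to projectives, so the two agree on all finitely generated $\C$-modules.

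I would actually write the proof using this second route, since it sidesteps the bookkeeping with \eqref{iota}: pick a presentation $P_1 \to P_0 \to V \to 0$ with $P_0, P_1$ finitely generated projective, apply the right-exact functors $D\Sigma$ and $\Sigma D$, and use the natural isomorphism on projectives together with the five lemma (or just the fact that cokernels are unique up to the induced isomorphism) to conclude $D\Sigma V \cong \Sigma D V$, naturally in $V$. The one thing to be careful about is that the isomorphism $D\Sigma P \cong \Sigma D P$ for projective $P$ is natural in $P$, so that it passes to the presentation; this follows because both isomorphisms ultimately come from the same underlying identity/conjugation of morphisms in $\C$ discussed above.

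**The main obstacle** is the naturality check: asserting $D\Sigma\tC(i,-)\cong\Sigma D\tC(i,-)$ as abstract modules is easy (both are $\bigoplus\tC(i-1,-)^{\oplus\text{(something)}}$ of the right rank by the two product formulas), but to push the isomorphism through a projective presentation I need it to be natural with respect to $\C$-module maps $\tC(j,-)\to\tC(i,-)$, i.e. with respect to the $\mk$-linearized morphisms of $\C$. Verifying this does come down to the compatibility $\Sigma\pi^{\ast} = \pi^{\ast}\Sigma$ at the level of natural transformations, which is the genuinely computational heart of the lemma — but it is a short, mechanical verification using \eqref{iota} and \eqref{pi}, essentially the statement that the self-embedding $\iota$ is compatible with the natural transformation $\pi$ in the evident way. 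I expect the whole proof to be three or four lines once that compatibility is granted.
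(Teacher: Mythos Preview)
Your overall strategy is the same as the paper's: compare the two maps $V_{n+1}\to V_{n+2}$ whose cokernels are $(\Sigma DV)_n$ and $(D\Sigma V)_n$. But the central expectation is wrong. The natural transformations $\Sigma\pi^{\ast}$ and $\pi^{\ast}\Sigma$ are \emph{not} equal; your phrase ``equivalently, that the two natural transformations \ldots\ agree'' is the mistake. Concretely (and correcting two slips in your write-up: the maps go $V_{n+1}\to V_{n+2}$, not $V_{n+2}\to V_{n+2}$, and the relevant morphism is $\iota(\pi_n)$, not $\iota(\pi_{n+1})$), one has
\[
\pi_{n+1}:\; r\mapsto r+1 \qquad\text{and}\qquad \iota(\pi_n):\; 1\mapsto 1,\ r\mapsto r+1\ (r\geqslant 2),
\]
whose images miss $1$ and $2$ respectively. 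Your description of ``the other composite'' as ``$r\mapsto r+2$ followed by the self-embedding fixing $1$'' matches neither map.

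What the paper does is exactly the ``conjugate'' remark you make in passing, but carried out precisely: one has $\alpha_{n+2}\circ\pi_{n+1}=\iota(\pi_n)$ where $\alpha_{n+2}\in S_{n+2}$ is the transposition $(1\ 2)$, so $V(\alpha_{n+2})$ carries one cokernel isomorphically onto the other. Naturality then amounts to checking that $\alpha_{m+2}\circ\iota^2(f)=\iota^2(f)\circ\alpha_{n+2}$ for every $f\in\C(n,m)$, which is immediate since $\iota^2(f)$ fixes $1$ and $2$. That short check is the entire content of the lemma; your ``second route'' through right-exact functors and projective presentations does not bypass it, because producing a \emph{natural} isomorphism $D\Sigma P\cong\Sigma DP$ on projectives is literally the same verification. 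An abstract isomorphism of ranks on each $\tC(i,-)$ is not enough to make the five-lemma square commute.
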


\begin{proof}
It is sufficient to construct a natural isomorphism between $\Sigma D$ and $D \Sigma$. This has been done by Ramos in \cite[Lemma 3.5]{R}. Note that in the setting of that paper, the self-embedding functor is defined in a way different from ours; see \cite[Definition 2.20]{R}. Therefore, we have to slightly modify the  proof in \cite[Lemma 3.5]{R}. In our setting,
\begin{align*}
(\Sigma DV)_n & = V_{n+2} / \pi_{n+1} (V_{n+1});\\
(D \Sigma V)_n & = V_{n+2} / (\iota (\pi_n)) (V_{n+1})
\end{align*}
where $\iota$ is the self-embedding functor and $\pi_n$ is defined in (\ref{pi}). By (\ref{iota}) and (\ref{pi}), we have
\begin{align*}
\pi_{n+1}: [n+1] \to [n+2], \quad & i \mapsto i+1;\\
\iota(\pi_n): [n+1] \to [n+2], \quad & i \mapsto
\begin{cases}
1, & i = 1;\\
i+1, & 2 \leqslant i \leqslant n+1.
\end{cases}
\end{align*}
Now the reader can see that $(D\Sigma V)_n$ and $(\Sigma D V)_n$ are isomorphic under the action of an bijection $\alpha_{n+2}: [n+2] \to [n+2]$ which permutes $1$ and $2$ and fixes all other elements. Moreover, the family of such bijections $\{ \alpha_n \mid n \geqslant 0\}$ gives a natural isomorphism between $\Sigma D$ and $D \Sigma$.
\end{proof}

\subsection{Adaptable projective resolutions}

A standard way to compute homologies and hence homological degrees is to use a suitable projective resolution.

\begin{definition} \label{adaptable projective resolutions}
Let $V$ be a finitely generated $\tC$-module. A projective resolution
\begin{equation*}
\ldots \to P^s \to P^{s-1} \to \ldots \to P^0 \to V \to 0
\end{equation*}
of $V$ is said to be \emph{adaptable} if for every $s \geqslant -1$, $\gd(P^{s+1}) = \gd(Z^s)$, where $Z^s$ is the $s$-th cycle and by convention $Z^{-1} = V$.
\end{definition}

\begin{lemma} \label{compare degrees}
Let $0 \to W \to P \ \to V \to 0$ be a short exact sequence of finitely generated $\C$-modules such that $P$ is projective and $\gd(V) = \gd(P)$. Then
\begin{equation*}
\gd(W) \leqslant \max \{ \hd_1(V), \, \gd(V) \} = \max \{\gd(V), \, \gd(W) \}.
\end{equation*}
\end{lemma}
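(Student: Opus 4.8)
The plan is to extract the inequality $\gd(W)\leqslant\max\{\hd_1(V),\gd(V)\}$ from the long exact sequence in homology attached to $0\to W\to P\to V\to 0$, and then to upgrade it to the claimed equality by comparing with $\gd(P)=\gd(V)$. First I would apply the functor $\tC_0\otimes_{\tC}-$ and look at the tail of the induced long exact sequence of $\Tor$-groups:
\begin{equation*}
\cdots\to H_1(P)\to H_1(V)\to H_0(W)\to H_0(P)\to H_0(V)\to 0.
\end{equation*}
Since $P$ is projective, $H_1(P)=0$, so $H_0(W)$ fits into $0\to H_1(V)\to H_0(W)\to \ker(H_0(P)\to H_0(V))\to 0$. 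Taking torsion degrees (which are subadditive on short exact sequences of torsion $\tC_0$-modules, since $\td$ of a submodule or quotient is bounded by $\td$ of the middle term — this is immediate from the concrete description in the Remark) gives
\begin{equation*}
\gd(W)=\td(H_0(W))\leqslant\max\{\td(H_1(V)),\,\td(\ker(H_0(P)\to H_0(V)))\}=\max\{\hd_1(V),\,\td(\ker(H_0(P)\to H_0(V)))\}.
\end{equation*}
Now $H_0(P)\to H_0(V)$ is surjective and both sides are concentrated in degrees $\leqslant\gd(P)=\gd(V)=:n$, so its kernel is supported in degrees $\leqslant n$, whence $\td(\ker(H_0(P)\to H_0(V)))\leqslant n=\gd(V)$. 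This yields $\gd(W)\leqslant\max\{\hd_1(V),\gd(V)\}$.

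For the asserted equality $\max\{\hd_1(V),\gd(V)\}=\max\{\gd(V),\gd(W)\}$, I would argue both directions. One inequality is already in hand once we also note $\gd(W)\leqslant\max\{\hd_1(V),\gd(V)\}$ forces $\max\{\gd(V),\gd(W)\}\leqslant\max\{\hd_1(V),\gd(V)\}$. For the reverse, it suffices to show $\hd_1(V)\leqslant\max\{\gd(V),\gd(W)\}$. Here I would use that $Z^0=W$ in any projective resolution beginning with $P^0=P$ — more precisely, since $P$ is projective, $\Tor^{\tC}_1(\tC_0,V)\cong\ker(H_0(P)\to H_0(W)\text{-part})$; concretely, from the same long exact sequence, $H_1(V)$ injects into $H_0(W)$, so $\hd_1(V)=\td(H_1(V))\leqslant\td(H_0(W))=\gd(W)\leqslant\max\{\gd(V),\gd(W)\}$. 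Combining the two directions gives the equality.

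I expect the main obstacle to be a bookkeeping subtlety rather than a conceptual one: one must be slightly careful that $\td$ genuinely behaves subadditively on the relevant short exact sequences of $\tC_0$-modules, and that "supported in degrees $\leqslant n$" correctly translates into a bound on $\td$ (this uses that $\td$ only ever detects degrees where the module is nonzero, together with the shift $\td(V)=\sup\{i:\exists\,0\ne v\in V_i\text{ killed by }\C(i,i+1)\}$ from the Remark). The projectivity of $P$ does the real work by killing $H_1(P)$, which is what makes $H_1(V)$ embed into $H_0(W)$ and simultaneously controls the kernel term by $\gd(P)=\gd(V)$; once that is set up, everything is a diagram chase on the long exact homology sequence.
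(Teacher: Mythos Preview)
Your proposal is correct and follows essentially the same approach as the paper: both extract the exact sequence $0\to H_1(V)\to H_0(W)\to H_0(P)\to H_0(V)\to 0$ from the long exact homology sequence (using $H_1(P)=0$) and read off the inequality from the torsion-degree bound. For the equality, the paper does a brief case split on whether $\hd_1(V)\leqslant\gd(V)$, while you use the injection $H_1(V)\hookrightarrow H_0(W)$ to get $\hd_1(V)\leqslant\gd(W)$ directly; these are equivalent one-line arguments.
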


\begin{proof}
The conclusions hold for $V = 0$ by convention, so we assume that $V$ is nonzero. The given short exact sequence gives rise to
\begin{equation*}
0 \to H_1 (V) \to H_0(W) \to H_0(P) \to H_0(V) \to 0.
\end{equation*}
Clearly,
\begin{equation*}
\gd(W) = \td(H_0(W)) \leqslant \max\{\td(H_1(V)), \, \td(H_0(P)) \} = \max \{ \hd_1(V), \, \gd(V) \}.
\end{equation*}
Moreover, if $\hd_1(V) \leqslant \gd(V)$, then $\gd(W) \leqslant \gd(V)$ as well. If $\hd_1(V) > \gd(V)$, then $\hd_1(V) = \gd(W)$. The equality follows from this observation.
\end{proof}

Given a finitely generated $\C$-module $V$, the following corollary relates generating degrees of components in an adaptable projective resolution of $V$ to homological degrees of $V$. That is:

\begin{corollary} \label{estimate degrees recursively}
Let $V$ be a finitely generated $\tC$-module, and let
\begin{equation*}
\ldots \to P^s \to P^{s-1} \to \ldots \to P^0 \to V \to 0
\end{equation*}
be an adaptable projective resolution of $V$. Let $d_s = \gd(P^s)$. Then
\begin{equation*}
d_s \leqslant \max \{ \hd_0(V), \, \ldots, \, \hd_{s-1}(V), \, \hd_s(V) \}
\end{equation*}
and
\begin{equation*}
\max \{d_0, \, \ldots, \, d_{s-1}, \, d_s \} = \max \{ \hd_0(V), \, \ldots, \, \hd_{s-1}(V), \, \hd_s(V) \}.
\end{equation*}
\end{corollary}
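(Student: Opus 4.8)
The plan is to prove Corollary \ref{estimate degrees recursively} by induction on $s$, using Lemma \ref{compare degrees} as the engine and the adaptability hypothesis to control the cycles. Recall that for an adaptable resolution we have $\gd(P^{s+1}) = \gd(Z^s)$ for all $s \geqslant -1$, where $Z^{-1} = V$ and $Z^s = \ker(P^s \to P^{s-1})$. In particular $d_0 = \gd(P^0) = \gd(Z^{-1}) = \gd(V) = \hd_0(V)$, which settles the base case $s = 0$ for both assertions.

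For the inductive step, I would fix $s \geqslant 1$ and look at the short exact sequence $0 \to Z^s \to P^s \to Z^{s-1} \to 0$. By adaptability, $\gd(P^s) = \gd(Z^{s-1})$, so this sequence satisfies the hypotheses of Lemma \ref{compare degrees} with $P = P^s$, $W = Z^s$, $V = Z^{s-1}$. The lemma gives $\gd(Z^s) \leqslant \max\{\hd_1(Z^{s-1}),\, \gd(Z^{s-1})\}$ together with the equality $\max\{\gd(Z^{s-1}),\, \gd(Z^s)\} = \max\{\hd_1(Z^{s-1}),\, \gd(Z^{s-1})\}$. The key identification is that $\hd_j(Z^{s-1}) = \hd_{j+s}(V)$ for $j \geqslant 1$: this is a standard dimension-shift, since breaking off the first $s$ terms of the projective resolution of $V$ exhibits $Z^{s-1}$ as an $s$-th syzygy, whence $H_j(Z^{s-1}) = \mathrm{Tor}_j^{\tC}(\tC_0, Z^{s-1}) \cong \mathrm{Tor}_{j+s}^{\tC}(\tC_0, V) = H_{j+s}(V)$ for $j \geqslant 1$, and applying $\td$ gives the claim. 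In particular $\hd_1(Z^{s-1}) = \hd_s(V)$. Since $d_{s+1} = \gd(P^{s+1}) = \gd(Z^s)$ by adaptability (shifting the index), we obtain $d_{s+1} \leqslant \max\{\hd_s(V),\, \gd(Z^{s-1})\}$ and, combining with the inductive hypothesis $\gd(Z^{s-1}) = d_s \leqslant \max\{\hd_0(V), \ldots, \hd_s(V)\}$, the first inequality of the corollary (with index $s+1$ in place of $s$) follows. Reindexing appropriately — i.e. running the argument so that the statement for $d_s$ uses the sequence $0 \to Z^{s-1} \to P^{s-1} \to Z^{s-2} \to 0$ — lines up the bookkeeping with the corollary as stated.

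For the equality $\max\{d_0, \ldots, d_s\} = \max\{\hd_0(V), \ldots, \hd_s(V)\}$, I would again induct, but now carry along the auxiliary fact that $\max\{d_0, \ldots, d_s\} = \max\{\gd(V),\, \gd(Z^0), \ldots, \gd(Z^{s-1})\}$, which is just adaptability applied termwise. The equality half of Lemma \ref{compare degrees} gives $\max\{\gd(Z^{s-1}),\, \gd(Z^s)\} = \max\{\hd_s(V),\, \gd(Z^{s-1})\}$, and feeding this into the inductive hypothesis $\max\{\gd(V), \gd(Z^0), \ldots, \gd(Z^{s-2})\} = \max\{\hd_0(V), \ldots, \hd_{s-1}(V)\}$ absorbs the new term $\gd(Z^{s-1})$ on the left and the new term $\hd_s(V)$ on the right simultaneously; this forces the two maxima to agree through index $s$. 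Combined with the first inequality (which shows the left side is $\leqslant$ the right side termwise up to the running maximum), and the reverse direction coming from the equality clause, the two expressions coincide.

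I expect the only genuine subtlety — and the step worth being careful about — is the dimension-shift identification $\hd_j(Z^{s-1}) = \hd_{j+s}(V)$ for $j \geqslant 1$, which requires noting that $\tC_0$-homology is computed as $\mathrm{Tor}^{\tC}$ and that syzygies of a projective resolution compute higher Tor, so there is a clean degree-shift isomorphism of $\tC$-modules $H_j(Z^{s-1}) \cong H_{j+s}(V)$, after which $\td$ can be applied to both sides. Everything else is a matter of threading Lemma \ref{compare degrees} through the induction and keeping the indices of $P^s$, $Z^s$, $d_s$, and $\hd_s$ aligned; there is no deep obstruction, only the risk of an off-by-one error in the recursion, which the adaptability hypothesis is precisely designed to prevent.
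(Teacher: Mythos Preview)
Your argument for the inequality is essentially the paper's: induction on $s$, applying Lemma \ref{compare degrees} to $0 \to Z^s \to P^s \to Z^{s-1} \to 0$ together with the dimension shift $\hd_1(Z^{s-1}) = \hd_{s+1}(V)$. Note, however, that your own formula $\hd_j(Z^{s-1}) = \hd_{j+s}(V)$ gives $\hd_1(Z^{s-1}) = \hd_{s+1}(V)$, not $\hd_s(V)$ as you wrote; your subsequent ``reindex by one'' remark is exactly the fix, but you should make the correction at the source rather than patch it downstream.

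For the equality, your route via the equality clause of Lemma \ref{compare degrees} and a parallel induction works, but the paper's argument is shorter: once the inequality $d_s \leqslant \max\{\hd_0(V), \ldots, \hd_s(V)\}$ is in hand, one simply observes that $H_i(V)$ is a subquotient of $\tC_0 \otimes_{\tC} P^i = H_0(P^i)$, so $\hd_i(V) \leqslant \gd(P^i) = d_i$ for every $i$, and the reverse inequality between the maxima is immediate. Your approach buys nothing extra here and carries more bookkeeping; the direct observation $\hd_i(V) \leqslant d_i$ is the cleaner way to close.
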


\begin{proof}
We use induction on $s$. If $s = 0$, nothing needs to show. Suppose that the conclusion holds for $s = n \geqslant 0$, and consider $s = n+1$.

Consider the short exact sequence
\begin{equation*}
0 \to Z^n \to P^n \to Z^{n-1} \to 0.
\end{equation*}
By Lemma \ref{compare degrees} and the definition of adaptable projective resolutions,
\begin{equation*}
d_{n+1} = \gd(Z^n) \leqslant \max \{\gd(Z^{n-1}), \, \hd_1 (Z^{n-1}) \} = \max \{d_n, \, \hd_{n+1} (V) \}
\end{equation*}
since $V = Z^{-1}$ by convention. However, by induction,
\begin{equation*}
d_n \leqslant \max \{ \hd_0(V), \, \ldots, \, \hd_n(V)\}.
\end{equation*}
The last two inequalities imply the conclusion for $n+1$. This establishes the inequality.

To show the equality, one observes that the inequality we just proved implies that
\begin{equation*}
\max \{d_0, \, \ldots, \, d_{s-1}, \, d_s \} \leqslant \max \{ \hd_0(V), \, \ldots, \, \hd_{s-1}(V), \, \hd_s(V) \}.
\end{equation*}
However, one observes from the definition of homologies that $d_i \geqslant \hd_i (V)$ for $i \in \Z$. Thus we also have
\begin{equation*}
\max \{d_0, \, \ldots, \, d_{s-1}, \, d_s \} \geqslant \max \{ \hd_0(V), \, \ldots, \, \hd_{s-1}(V), \, \hd_s(V) \}.
\end{equation*}
\end{proof}

Using the functor $D$, one may relate the homological degrees of a finitely generated $\C$-module $V$ to those of $DV$.

\begin{proposition} \label{degrees of projective resolutions}
Let $V$ be a finitely generated torsionless $\tC$-module, and let
\begin{equation*}
\ldots \to P^s \to P^{s-1} \to \ldots \to P^0 \to V \to 0
\end{equation*}
be an adaptable projective resolution of $V$. Then it induces an adaptable projective resolution
\begin{equation*}
\ldots \to DP^s \to DP^{s-1} \to \ldots \to DP^0 \to DV \to 0
\end{equation*}
such that for $s \in \Z$
\begin{equation*}
\gd(DP^s) =
\begin{cases}
-\infty & \text{ if } \gd(P^s) = 0 \text{ or } -\infty\\
\gd(P^s) - 1 & \text{ if } \gd(P^s) \geqslant 1.
\end{cases}
\end{equation*}
\end{proposition}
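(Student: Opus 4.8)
The plan is to prove the statement in two stages: first establish exactness of the claimed complex after applying $D$, then verify that the resulting resolution is again adaptable, with the generating-degree formula coming for free from Proposition \ref{properties of D}(1). For the first stage, the key input is Proposition \ref{properties of D}(1), which says $D$ sends projectives to projectives, so each $DP^s$ is projective. Exactness of $\ldots \to DP^s \to DP^{s-1} \to \ldots \to DP^0 \to DV \to 0$ is more delicate because $D$ is only right exact. The idea is to split the original resolution into short exact sequences $0 \to Z^s \to P^s \to Z^{s-1} \to 0$ (with $Z^{-1} = V$) and apply $D$ to each. As long as every cycle $Z^s$ is torsionless, Proposition \ref{properties of D}(2) gives a short exact sequence $0 \to Z^s \to \Sigma Z^s \to DZ^s \to 0$, and then the snake lemma applied to the map of short exact sequences $0 \to Z^s \to P^s \to Z^{s-1} \to 0$ (comparing each term with its shift) yields $0 \to DZ^s \to DP^s \to DZ^{s-1} \to 0$ — here one uses that the connecting map vanishes precisely because the relevant modules are torsionless so that $Z^s \hookrightarrow \Sigma Z^s$ etc. Splicing these short exact sequences back together gives the desired long exact sequence, with $DZ^s$ as the $s$-th cycle.

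So the real content is: \emph{each cycle $Z^s$ in an adaptable projective resolution of a torsionless module $V$ is again torsionless.} I would prove this by induction on $s$. For $s = -1$ it is the hypothesis on $V$. For the inductive step, from $0 \to Z^s \to P^s \to Z^{s-1} \to 0$ with $Z^{s-1}$ torsionless, the long exact sequence of $\Tor$ against $\tC_0$ gives $H_{s+1}(Z^{s-1}) = 0$-type vanishing is not quite what is needed; instead the cleaner route is purely submodule-theoretic: $Z^s$ is a submodule of the projective module $P^s$, and a submodule of a torsionless module is torsionless (if $0 \neq v \in (Z^s)_i$ had $\alpha \cdot v = 0$ for some $\alpha \in \C(i,i+1)$, the same would hold viewing $v \in (P^s)_i$, contradicting that $P^s$ is projective hence torsionless). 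Thus it suffices to know that each projective $\C$-module is torsionless, which follows since $\tC(i,-)$ is torsionless (the action of any $\alpha \in \C(i, i+1)$ on basis morphisms $\C(i, j)$ is by post-composition, which is injective), and torsionlessness is preserved by direct sums and direct summands. This gives torsionlessness of all cycles immediately, and the induction is actually unnecessary.

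Once exactness is in hand, the adaptability of the new resolution is the last step. By definition I must check $\gd(DP^{s+1}) = \gd(DZ^s)$ for all $s \geqslant -1$, where $DZ^s$ is the $(s)$-th cycle of the $D$-resolution and $DZ^{-1} = DV$. Since the original resolution is adaptable, $\gd(P^{s+1}) = \gd(Z^s)$. Now I apply Proposition \ref{properties of D}(3) to both $P^{s+1}$ and $Z^s$: if $\gd(Z^s) \geqslant 1$ then $\gd(DZ^s) = \gd(Z^s) - 1 = \gd(P^{s+1}) - 1 = \gd(DP^{s+1})$, using Proposition \ref{properties of D}(1) for the last equality; and if $\gd(Z^s) \in \{0, -\infty\}$ then both $DZ^s$ and $DP^{s+1}$ vanish, so both generating degrees are $-\infty$. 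Either way adaptability holds, and the displayed formula for $\gd(DP^s)$ is exactly Proposition \ref{properties of D}(1) combined with the fact that $\gd(\tC(i-1,-)^{\oplus i}) = i - 1$ when $i \geqslant 1$ and the module is zero when $i = 0$. The main obstacle, such as it is, is bookkeeping with the snake lemma to get the short exact sequences $0 \to DZ^s \to DP^s \to DZ^{s-1} \to 0$ to hold on the nose — i.e., confirming the connecting homomorphism is zero — but this is guaranteed by the torsionless property established above together with Proposition \ref{properties of D}(2), so no genuine difficulty remains.
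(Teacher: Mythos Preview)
Your proposal is correct and follows essentially the same route as the paper. The paper's proof is a terse two lines: it asserts that all cycles are torsionless and then invokes Corollary~\ref{torsionless modules induce diagram} (which is exactly your snake-lemma step applied to each short exact sequence $0 \to Z^s \to P^s \to Z^{s-1} \to 0$) and Proposition~\ref{properties of D}. You have unpacked both of these appeals: you supply the justification the paper omits for why every cycle is torsionless (as a submodule of a projective, hence torsionless, module), and you carry out the adaptability check via Proposition~\ref{properties of D}(3) that the paper leaves to the reader. The only cosmetic point is that the meandering aside about trying induction on $s$ before settling on the submodule argument could be dropped.
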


\begin{proof}
Let $P^{\bullet} \to V \to 0$ be the resolution. Since $V$ and all cycles are torsionless, by Corollary \ref{torsionless modules induce diagram} one gets a commutative diagram
\begin{equation*}
\xymatrix{
0 \ar[r] & P^{\bullet} \ar[r] \ar[d] & \Sigma P^{\bullet} \ar[r] \ar[d] & DP^{\bullet} \ar[r] \ar[d] & 0\\
0 \ar[r] & V \ar[r] & \Sigma V \ar[r] & DV \ar[r] & 0.
}
\end{equation*}
The conclusion then follows from Proposition \ref{properties of D}.
\end{proof}

As an immediate consequence of the above result, we have:

\begin{corollary} \label{compare the first two degrees}
Let $V$ be a finitely generated torsionless $\C$-module. Then for $s \in \Z$
\begin{equation*}
\max \{\hd_0(V), \, \ldots, \, \hd_s(V) \} \geqslant \max \{\hd_0(DV), \, \ldots, \, \hd_s(DV) \} + 1.
\end{equation*}
Moreover, the equality holds if $\gd(V) \geqslant 1$.
\end{corollary}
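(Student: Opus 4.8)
The plan is to feed an adaptable projective resolution of $V$ into Proposition \ref{degrees of projective resolutions} and then translate generating degrees back into homological degrees using Corollary \ref{estimate degrees recursively}; the whole argument is bookkeeping built on those two results.

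First I would fix an adaptable projective resolution
\[
\ldots \to P^s \to P^{s-1} \to \ldots \to P^0 \to V \to 0
\]
of $V$ (such a resolution exists: construct it recursively, at each stage choosing a projective surjecting onto the current cycle $Z^{s-1}$ whose generating degree equals $\gd(Z^{s-1})$). Set $d_i = \gd(P^i)$. Since $V$ is torsionless, Proposition \ref{degrees of projective resolutions} produces an adaptable projective resolution $\ldots \to DP^s \to \ldots \to DP^0 \to DV \to 0$ whose terms $e_i := \gd(DP^i)$ satisfy $e_i = d_i - 1$ when $d_i \geqslant 1$, and $e_i = -\infty$ when $d_i \in \{0, -\infty\}$. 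In all three cases $e_i \leqslant d_i - 1$, with the convention $(-\infty) - 1 = -\infty$.

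Next I would apply Corollary \ref{estimate degrees recursively} to both resolutions, obtaining for each $s$
\[
\max\{d_0, \ldots, d_s\} = \max\{\hd_0(V), \ldots, \hd_s(V)\}, \qquad \max\{e_0, \ldots, e_s\} = \max\{\hd_0(DV), \ldots, \hd_s(DV)\}.
\]
Combining these equalities with $e_i \leqslant d_i - 1$ for $0 \leqslant i \leqslant s$ yields
\[
\max\{\hd_0(DV), \ldots, \hd_s(DV)\} = \max\{e_0, \ldots, e_s\} \leqslant \max\{d_0, \ldots, d_s\} - 1 = \max\{\hd_0(V), \ldots, \hd_s(V)\} - 1,
\]
which is the asserted inequality.

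For the equality under the hypothesis $\gd(V) \geqslant 1$, I would note $d_0 = \gd(P^0) = \gd(V) \geqslant 1$, so if $j$ is an index in $\{0, \ldots, s\}$ attaining $\max\{d_0, \ldots, d_s\}$ then $d_j \geqslant d_0 \geqslant 1$, whence $e_j = d_j - 1 = \max\{d_0, \ldots, d_s\} - 1$; this forces $\max\{e_0, \ldots, e_s\} \geqslant \max\{d_0, \ldots, d_s\} - 1$, and combined with the reverse inequality above we get equality. I do not anticipate a genuine obstacle here: the only subtle points are the boundary behavior of the $D$-degree formula in the cases $\gd(P^i) \in \{0, -\infty\}$ (where one must record $-\infty$ rather than a naive $-1$) and the observation that the maximizing index for the $d_i$'s is one where the drop under $D$ is exactly $1$ — both handled above.
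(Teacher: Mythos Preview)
Your proposal is correct and follows essentially the same approach as the paper: take an adaptable projective resolution, apply Proposition~\ref{degrees of projective resolutions} to get one for $DV$, and use Corollary~\ref{estimate degrees recursively} to convert both sides into maxima of homological degrees. The paper disposes of the cases $\gd(V)\in\{0,-\infty\}$ separately at the outset (noting $DV=0$), whereas you handle them uniformly via the inequality $e_i\leqslant d_i-1$ with the convention $(-\infty)-1=-\infty$; your treatment of the equality case (picking a maximizing index $j$ and using $d_j\geqslant d_0\geqslant 1$) is also slightly more explicit than the paper's, but the substance is identical.
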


\begin{proof}
The conclusion holds trivially if $\gd(V) = 0$ or $-\infty$ since in that case $DV = 0$. So we assume $\gd(V) \geqslant 1$, and $DV \neq 0$. Let $ P^{\bullet} \to V \to 0$ be an adaptable projective resolution. By Corollary \ref{estimate degrees recursively} and Proposition \ref{degrees of projective resolutions}, one has
\begin{equation*}
\max \{\gd(V), \, \ldots, \, \hd_s(V) \} = \max \{\gd(P^0), \, \ldots, \, \gd(P^s) \}
\end{equation*}
and
\begin{equation*}
\max \{\gd(DV), \, \ldots, \, \hd_s(DV) \} = \max \{\gd(DP^0), \, \ldots, \, \gd(DP^s) \}.
\end{equation*}
Moreover, $\gd(P^i) \geqslant \gd(DP^i) + 1$ for $i \in \Z$, and the equality holds if $\gd(P^i) \geqslant 1$. The desired inequality and equality follow from these observations.
\end{proof}

\section{Filtrations of $\FI$-modules}

In the previous section we use adaptable projective resolutions to estimate homological degrees of finitely generated $\C$-modules. However, since finitely generated $\C$-modules usually have infinite projective dimension, the resolutions are of infinite length. For the purpose of estimating homological degrees, $\sharp$-filtered modules play a more subtle role since we will show that every finitely generated $\C$-module $V$ gives rise to a complex of $\sharp$-filtered modules which is of finite length. Moreover, we will see that these special modules, coinciding with projective modules when $\mk$ is a field of characteristic 0, have similar homological properties as projective modules.

\subsection{A homological characterization of $\sharp$-filtered modules}

Recall that a finitely generated $\C$-module is \emph{$\sharp$-filtered} if there exists a chain
\begin{equation*}
0 = V^{-1} \subseteq V^0 \subseteq \ldots \subseteq V^n = V
\end{equation*}
such that $V^i /V^{i-1}$ is isomorphic to $\tC \otimes _{\mk S_i} T_i$ for $0 \leqslant i \leqslant n$, where $S_i$ is the symmetric group on $i$ letters, and $T_i$ is a finitely generated $\mk S_i$-module.

An important fact of $\FI$, which can be easily observed, is:

\begin{lemma}
For $n \in \Z$, the $\C$-module $\tC 1_n$ is a right free $\mk S_n$-module.
\end{lemma}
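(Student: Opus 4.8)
The plan is to unwind the definitions and exhibit an explicit right $\mk S_n$-basis of $\tC 1_n$. Recall that $\tC 1_n$, as a left $\tC$-module, is the $\mk$-linearization $\tC(n,-)$ of the representable functor; its value on an object $[m]$ is the free $\mk$-module on the set $\C(n,m)$ of injections $[n] \hookrightarrow [m]$. The right action of $\mk S_n = \tC(n,n)$ is by precomposition. So what I want to show is that $\C(n,m)$, viewed as a right $S_n$-set via precomposition, is a free $S_n$-set, i.e. the action is free and one can choose a set of orbit representatives; summing over all $m$ then gives a right $\mk S_n$-basis of $\tC 1_n$.

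First I would observe that the right $S_n$-action on $\C(n,m)$ is free: if $\alpha g = \alpha$ for an injection $\alpha$ and $g \in S_n$, then since $\alpha$ is injective it is left-cancellable, forcing $g = 1_n$. Next, I would pick canonical orbit representatives. An injection $\alpha \colon [n] \hookrightarrow [m]$ is determined by its image $\im(\alpha) \subseteq [m]$, an $n$-element subset, together with the bijection $[n] \to \im(\alpha)$ it induces; two injections lie in the same right $S_n$-orbit precisely when they have the same image. So for each $n$-element subset $A \subseteq [m]$ let $\iota_A \colon [n] \hookrightarrow [m]$ be the unique \emph{order-preserving} injection with image $A$. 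Then $\{\iota_A \mid A \in \binom{[m]}{n}\}$ is a complete, irredundant set of orbit representatives: every $\alpha$ with $\im(\alpha) = A$ can be written uniquely as $\alpha = \iota_A g$ for some $g \in S_n$ (namely $g = \iota_A^{-1}\alpha$, where $\iota_A^{-1}$ means the inverse of the corestriction $[n] \to A$). Hence $\C(n,m) = \bigsqcup_{A} \iota_A S_n$ as a right $S_n$-set, and therefore $\underline{\C(n,m)} = \bigoplus_A \iota_A \cdot \mk S_n$ is free as a right $\mk S_n$-module with basis $\{\iota_A\}$. Taking the direct sum over all $m \in \Z$ (equivalently, over all $m \geqslant n$, since $\C(n,m) = \emptyset$ for $m < n$) shows $\tC 1_n = \bigoplus_{m \geqslant n} \underline{\C(n,m)}$ is right free over $\mk S_n$, with basis the collection of all order-preserving injections out of $[n]$.

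There is essentially no hard part here; this is a bookkeeping lemma recording the combinatorics of $\FI$ that makes the basic $\sharp$-filtered modules $\tC \otimes_{\mk S_i} T$ well-behaved (flatness of $\tC 1_i$ over $\mk S_i$, so that $\tC \otimes_{\mk S_i} -$ is exact on $\mk S_i$-modules). The only point that requires a moment's care is the uniqueness in the factorization $\alpha = \iota_A g$, which is exactly the freeness of the action established in the first step; once that is in hand the decomposition into orbits is automatic. I would present the argument in the two steps above — freeness of the action, then the order-preserving representatives — and leave the passage from "free $S_n$-set" to "free $\mk S_n$-module" as the standard observation that the $\mk$-linearization of a free $G$-set is a free $\mk G$-module.
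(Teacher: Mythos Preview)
Your proof is correct and follows exactly the same approach as the paper: the paper's proof simply notes that $S_n$ acts freely on $\C(n,m)$ from the right and states that the conclusion follows, while you have spelled out the details (left-cancellability of injections, order-preserving orbit representatives, passage from free $G$-sets to free $\mk G$-modules) that the paper leaves implicit.
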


\begin{proof}
Note that for $n \in \Z$ and $m \geqslant n$, the group $S_n = \C(n, n)$ acts freely on $\C(n, m)$ from the right side. The conclusion follows.
\end{proof}

This elementary observation implies that higher homologies of $\sharp$-filtered modules vanish.

\begin{lemma} \label{higher homologies of sharp-filtered modules vanish}
If $V$ is a $\sharp$-filtered module, then $H_s(V) = 0$ for all $s \geqslant 1$.
\end{lemma}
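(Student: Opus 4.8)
The plan is to reduce the statement to the vanishing of higher homologies of basic $\sharp$-filtered modules $\tC \otimes_{\mk S_i} T_i$, using a filtration argument, and then to compute those homologies directly via a suitable projective resolution.

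First I would observe that $\sharp$-filteredness is stable under extensions in the following sense: if $0 \to V' \to V \to V'' \to 0$ is short exact and both $H_s(V') = 0$ and $H_s(V'') = 0$ for all $s \geqslant 1$, then the long exact sequence of $\Tor^{\tC}(\tC_0, -)$ forces $H_s(V) = 0$ for all $s \geqslant 1$ as well (the only possibly nonzero connecting maps land between a vanishing and a vanishing term). So by induction on the length $n$ of the filtration $0 = V^{-1} \subseteq \cdots \subseteq V^n = V$, it suffices to prove the claim for a single basic $\sharp$-filtered module $\tC \otimes_{\mk S_i} T_i$.

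Next I would compute $H_s(\tC \otimes_{\mk S_i} T_i) = \Tor_s^{\tC}(\tC_0, \tC \otimes_{\mk S_i} T_i)$. The key input is the preceding lemma: $\tC 1_i = \tC(i,-)$ is a \emph{right} free $\mk S_i$-module, i.e. $\tC \otimes_{\mk S_i} -$ is exact on $\mk S_i$-modules (it sends a free $\mk S_i$-module to a projective $\tC$-module, and preserves exactness because $\tC 1_i$ is flat, indeed free, as a right $\mk S_i$-module). Now take a projective resolution $Q^{\bullet} \to T_i \to 0$ of $T_i$ over $\mk S_i$; applying the exact functor $\tC \otimes_{\mk S_i} -$ yields a resolution $\tC \otimes_{\mk S_i} Q^{\bullet} \to \tC \otimes_{\mk S_i} T_i \to 0$ by projective $\tC$-modules (each $\tC \otimes_{\mk S_i} Q^j$ is a summand of a direct sum of copies of $\tC(i,-)$, hence projective). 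Therefore $\Tor_s^{\tC}(\tC_0, \tC \otimes_{\mk S_i} T_i)$ is the $s$-th homology of $\tC_0 \otimes_{\tC} (\tC \otimes_{\mk S_i} Q^{\bullet}) \cong (\tC_0 1_i) \otimes_{\mk S_i} Q^{\bullet} = \tC(i,i) \otimes_{\mk S_i} Q^{\bullet} = \mk S_i \otimes_{\mk S_i} Q^{\bullet} \cong Q^{\bullet}$, where I used that $\tC_0 \otimes_{\tC} \tC(i,-) = (\tC_0)_i = \tC(i,i) = \mk S_i$. Since $Q^{\bullet} \to T_i \to 0$ is exact, its homology vanishes in positive degrees, so $H_s(\tC \otimes_{\mk S_i} T_i) = 0$ for $s \geqslant 1$, as desired.

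The main obstacle — really the only nontrivial point — is to justify cleanly that $\tC \otimes_{\mk S_i} -$ carries a projective resolution of $T_i$ over $\mk S_i$ to a projective resolution of $\tC \otimes_{\mk S_i} T_i$ over $\tC$, and to identify $\tC_0 \otimes_{\tC} (\tC \otimes_{\mk S_i} -)$ with the identity functor on $\mk S_i$-modules; both follow from the freeness of $\tC(i,-)$ as a right $\mk S_i$-module and the computation $(\tC_0)_i \cong \mk S_i$, but the bookkeeping with the ``nonunital algebra'' $\tC$ and the idempotents $1_n$ should be spelled out carefully. An alternative that avoids resolving $T_i$ is to use the change-of-rings (Shapiro-type) spectral sequence or base-change formula $\Tor_*^{\tC}(\tC_0, \tC \otimes_{\mk S_i} T_i) \cong \Tor_*^{\mk S_i}(\tC_0 1_i, T_i) = \Tor_*^{\mk S_i}(\mk S_i, T_i)$, which is concentrated in degree $0$; I would mention this as the conceptual reason but carry out the elementary resolution argument in the write-up.
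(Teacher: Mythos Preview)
Your proposal is correct and follows essentially the same approach as the paper: reduce to basic $\sharp$-filtered modules via the filtration and long exact sequences, then use that $\tC 1_i$ is right free over $\mk S_i$ to transport a projective resolution of $T_i$ to one of $\tC\otimes_{\mk S_i}T_i$, and observe that applying $\tC_0\otimes_{\tC}-$ recovers the original $\mk S_i$-resolution. The only cosmetic difference is that the paper does the basic case by one-step dimension shifting (short exact sequence $0\to W_i\to P_i\to V_i\to 0$, deduce $H_1(V)=0$, then recurse), whereas you take the full resolution at once; the content is the same.
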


\begin{proof}
Firstly we consider a special case: $V$ is basic. That is, $V = \tC \otimes _{\mk S_i} V_i$ for some $i \in \Z$. Let
\begin{equation*}
0 \to W_i \to P_i \to V_i \to 0
\end{equation*}
be a short exact sequence of $\mk S_i$-modules such that $P_i$ is projective. Since $\tC$ is a right projective $\mk S_i$-module, we get an exact sequence
\begin{equation*}
0 \to W = \tC \otimes _{\mk S_i} W_i \to P = \tC \otimes _{\mk S_i} P_i \to V = \tC \otimes _{\mk S_i} V_i \to 0.
\end{equation*}
Note that the middle term is a projective $\tC$-module. By applying $\tC_0 \otimes_{\tC} -$ one recovers the first exact sequence, so $H_1(V) = 0$. Replacing $V$ by $W$, one deduces that $H_2(V) = H_1(W) = 0$. The conclusion follows by recursion.

For the general case, one may take a filtration for $V$, each component of which is a basic $\sharp$-filtered module. The conclusion follows from a standard homological method: short exact sequences induce long exact sequences on homologies.
\end{proof}

The following lemma was proved in \cite[Lemma 2.2]{N}. Here we give two proofs from the homological viewpoint.

\begin{lemma} \label{modules generated in one degree}
Let $V$ be a finitely generated $\C$-module generated in one degree. If $\hd_1(V) \leqslant \gd(V)$, then $V$ is a $\sharp$-filtered module.
\end{lemma}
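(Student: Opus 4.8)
The plan is to show that a $\C$-module $V$ generated in a single degree $n$ with $\hd_1(V) \leqslant n$ is isomorphic to a basic $\sharp$-filtered module, i.e. $V \cong \tC \otimes_{\mk S_n} V_n$. First I would write down the canonical surjection: since $V$ is generated in degree $n$, the $\mk S_n$-module $V_n$ induces a map $\phi\colon \tC \otimes_{\mk S_n} V_n \to V$ which is surjective (it hits all of $V_n$ in degree $n$, hence all of $V$ by the generation hypothesis). Let $W = \ker \phi$, giving a short exact sequence
\begin{equation*}
0 \to W \to \tC \otimes_{\mk S_n} V_n \to V \to 0.
\end{equation*}
Because $\tC$ is a right projective $\mk S_n$-module (by the preceding lemma, $\tC 1_n$ is right free over $\mk S_n$), the middle term $P := \tC \otimes_{\mk S_n} V_n$ is a projective $\tC$-module, and by construction $\gd(P) = n = \gd(V)$, so this is the start of an adaptable projective resolution. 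The goal is to prove $W = 0$.

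The key step is a degree analysis of $W$. Applying the right-exact functor $\tC_0 \otimes_{\tC} -$ to the short exact sequence yields the exact sequence
\begin{equation*}
0 \to H_1(V) \to H_0(W) \to H_0(P) \to H_0(V) \to 0.
\end{equation*}
Now $H_0(P) = \tC_0 \otimes_{\tC} (\tC \otimes_{\mk S_n} V_n)$ is concentrated in degree $n$ and equals $V_n$ there, while $H_0(V)$ is also concentrated in degree $n$ and equals $V_n$; the induced map $H_0(P) \to H_0(V)$ is an isomorphism. Hence $H_0(W) \cong H_1(V)$, so $\gd(W) = \hd_1(V) \leqslant n$ (or $W = 0$ outright). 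Concretely $W$ is generated in degrees $\leqslant n$. But $W \subseteq P$ and $P_i = 0$ for $i < n$ (the basic $\sharp$-filtered module $\tC \otimes_{\mk S_n} V_n$ vanishes below degree $n$), so $W$ is generated in degree exactly $n$: that is, $W$ is spanned as a $\tC$-module by $W_n$. Comparing degree-$n$ parts of the sequence $0 \to W_n \to P_n \to V_n \to 0$, and noting $P_n = V_n$ with the map $P_n \to V_n$ being the identity, we get $W_n = 0$; since $W$ is generated by $W_n$, this forces $W = 0$, and hence $\phi$ is an isomorphism.

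The main obstacle is making rigorous the claim that $\phi$ restricts to an isomorphism in degree $n$ (so that $P_n \to V_n$ is the identity and the degree-$n$ part of the kernel is trivial), and the claim that $P$ vanishes in degrees below $n$ — both follow from the explicit description of the basic $\sharp$-filtered module $\tC \otimes_{\mk S_n} V_n$ and the fact that $\C(n,m) = \emptyset$ for $m < n$, together with $\C(n,n) = S_n$, so that $(\tC \otimes_{\mk S_n} V_n)_n = \mk S_n \otimes_{\mk S_n} V_n = V_n$. Given those structural facts the argument is a short diagram chase. (Since the excerpt promises two proofs, an alternative route is to induct on $\gd(V)$ using the functor $D$ and Proposition~\ref{properties of D}: if $V$ is torsionless one has $0 \to V \to \Sigma V \to DV \to 0$ with $DV$ generated in degree $n-1$ and $\hd_1(DV) \leqslant \hd_1(V) - 1 \leqslant n - 1$ by the homological-degree bounds of Section~2, so $DV$ is basic $\sharp$-filtered by induction, and one lifts a filtration back through $\Sigma$; the torsion case is handled separately, but a module generated in one degree with $\hd_1 \leqslant \gd$ is readily seen to be torsionless, removing that worry.)
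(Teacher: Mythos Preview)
Your main argument is essentially the paper's second proof, but it contains one genuine error in justification: the module $P = \tC \otimes_{\mk S_n} V_n$ is \emph{not} in general a projective $\tC$-module. The fact that $\tC 1_n$ is right free over $\mk S_n$ only tells you that the functor $\tC \otimes_{\mk S_n} -$ is exact; it carries projective $\mk S_n$-modules to projective $\tC$-modules, but $V_n$ is not assumed projective. Consequently your remark that this is ``the start of an adaptable projective resolution'' is unfounded, and more importantly the exact sequence $0 \to H_1(V) \to H_0(W) \to H_0(P) \to H_0(V) \to 0$ does not follow from projectivity of $P$.

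The fix is immediate: what you actually need is $H_1(P) = 0$, and this holds because $P$ is a basic $\sharp$-filtered module, so Lemma~\ref{higher homologies of sharp-filtered modules vanish} applies. With that correction the remainder of your argument (the identification $H_0(W) \cong H_1(V)$, the bound $\gd(W) \leqslant n$, and the observation that $W_i = 0$ for all $i \leqslant n$ forces $W = 0$) goes through verbatim and matches the paper's second proof exactly. For comparison, the paper's first proof starts instead from a genuine projective surjection $P \to V$ with $\gd(P) = n$, uses Lemma~\ref{compare degrees} to get $\gd(W) \leqslant n$, and then compares the degree-$n$ sequence $0 \to W_n \to P_n \to V_n \to 0$ with its induced sequence under $\tC \otimes_{\mk S_n} -$ via the snake lemma.
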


\begin{proof}
The conclusion holds trivially for $V=0$, so we assume $\gd(V) = n \geqslant 0$. Consider the short exact sequence
\begin{equation*}
0 \to W \to P \to V \to 0
\end{equation*}
where $P$ is a projective $\C$-module with $\gd(P) = n$. Since $\hd_1(V) \leqslant n$, one knows that $\gd(W) \leqslant n$ by Lemma \ref{compare degrees}. If $\gd(W) < n$, then $W = 0$ since $W_i = 0$ for all $i < n$. Thus $V \cong P$ is clearly a $\sharp$-filtered module. Now we consider the case that $\gd(W) = n$.

Since $0 \to W_n \to P_n \to V_n \to 0$ is a short exact sequence of $\mk S_n$-modules and $\tC$ is a right projective $\mk S_n$-module, we obtain a short exact sequence
\begin{equation*}
0 \to \tC \otimes _{\mk S_n} W_n \to \tC \otimes _{\mk S_n} P_n \to \tC \otimes _{\mk S_n} V_n \to 0.
\end{equation*}
Note that $W$, $P$, and $V$ are all generated in degree $n$. Via the multiplication map we get a commutative diagram such that all vertical maps are surjective:
\begin{equation*}
\xymatrix{
0 \ar[r] & \tC \otimes _{\mk S_n} W_n \ar[r] \ar[d] & \tC \otimes _{\mk S_n} P_n \ar[r] \ar[d] & \tC \otimes _{\mk S_n} V_n \ar[r] \ar[d] & 0\\
0 \ar[r] & W \ar[r] & P \ar[r] & V \ar[r] & 0.
}
\end{equation*}
But the middle vertical map is actually an isomorphism. This forces the other two vertical maps to be isomorphisms by snake lemma, and the conclusion follows.
\end{proof}

\begin{proof}
If $V = 0$, nothing needs to show. Otherwise, let $\gd(V) = n \geqslant 0$. Since $V$ is generated in degree $n$, there is a short exact sequence
\begin{equation*}
0 \to K \to \tilde{V} = \tC \otimes _{\mk S_n} V_n \to V \to 0
\end{equation*}
which induces an exact sequence
\begin{equation*}
0 \to H_1(V) \to H_0(K) \to H_0(\tilde{V}) \to H_0(V) \to 0
\end{equation*}
by the previous lemma. Note that the map $H_0(\widetilde{V}) \to H_0(V)$ is an isomorphism. Consequently, $H_0(K) \cong H_1(V)$. In particular, $\gd(K) = \hd_1(V) \leqslant \gd(V) = n$. But it is clear that $K_i = 0$ for all $i \leqslant n$. Therefore, the only possibility is that $K = 0$.
\end{proof}

A useful result is:

\begin{corollary} \label{reduction}
Let $V$ be a finitely generated $\C$-module. Then there exists a short exact sequence
\begin{equation*}
0 \to U \to V \to W \to 0
\end{equation*}
such that $H_s(U) = H_s(V)$ for $s \geqslant 1$ and $W$ is $\sharp$-filtered. Moreover, if $V$ is not $\sharp$-filtered, one always has $\hd_1(U) > \gd(U)$.
\end{corollary}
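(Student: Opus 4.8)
The plan is to build the short exact sequence by iterating the "top-degree extraction" step used in Lemma \ref{modules generated in one degree}, peeling off a $\sharp$-filtered quotient one degree at a time, starting from the top generating degree. Concretely, suppose $\gd(V) = n$. Let $V'$ be the submodule of $V$ generated by $\bigoplus_{j \leqslant n-1} V_j$, and set $V'' = V/V'$, which is generated in the single degree $n$. Using the multiplication map $\tC \otimes_{\mk S_n} V''_n \to V''$ one gets a surjection with some kernel; but rather than hoping this kernel vanishes (it need not, since we have no hypothesis on $\hd_1$), I would instead let $W$ be the \emph{image} of the composite $\tC \otimes_{\mk S_n} V_n \to V$, equivalently the $\sharp$-filtered "shadow" of the top degree — actually the cleanest formulation is: let $W$ be the largest $\sharp$-filtered quotient of $V$, obtained by the following construction. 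Take a minimal projective cover $P^0 \to V$ with $\gd(P^0) = n$; let $U$ be the kernel, so $H_0(U) \cong$ (a submodule of) the first syzygy and $\gd(U) \leqslant \max\{\hd_1(V), n\}$. This is not yet quite what we want either, so let me describe the construction I actually intend to carry out.

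First I would treat the case $V$ generated in one degree as the base case: here Lemma \ref{modules generated in one degree} applies only when $\hd_1(V) \leqslant \gd(V)$, so in general I take the surjection $\tilde V = \tC \otimes_{\mk S_n} V_n \to V$ with kernel $K$. Then $\tilde V$ is basic $\sharp$-filtered, $H_s(\tilde V) = 0$ for $s \geqslant 1$ by Lemma \ref{higher homologies of sharp-filtered modules vanish}, and the long exact sequence in homology gives $H_{s+1}(V) \cong H_s(K)$ for $s \geqslant 1$ and a four-term sequence relating $H_1(V)$, $H_0(K)$, $H_0(\tilde V)$, $H_0(V)$ — but note this has $V$ as a \emph{quotient} of a $\sharp$-filtered module, not the other way around. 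To get $W$ as a $\sharp$-filtered \emph{quotient}, I would instead proceed by induction on $\gd(V)$ and on the number of generators. Let $V' \subseteq V$ be generated by degrees $\leqslant n-1$ and $V'' = V/V''$... more precisely: by induction on $\gd(V')$ there is $0 \to U' \to V' \to W' \to 0$ with $W'$ $\sharp$-filtered and $H_s(U') = H_s(V')$ for $s\geqslant 1$; and since $V''$ is generated in one degree, I would show the natural surjection $\tC\otimes_{\mk S_n} V''_n \to V''$ realizes $V''$ as having a $\sharp$-filtered quotient $W''$ (take $W''$ to be $V''$ itself if $\hd_1(V'')\leqslant n$, handled by Lemma \ref{modules generated in one degree}, and otherwise iterate). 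Assembling $W$ from $W'$ and $W''$ via a pushout/pullback diagram and applying the snake lemma and the long exact sequences in homology gives the sequence $0 \to U \to V \to W \to 0$ with the homology-matching property.

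For the "moreover" clause: suppose $V$ is not $\sharp$-filtered. Then $U \neq 0$ (else $V \cong W$ would be $\sharp$-filtered), and I must show $\hd_1(U) > \gd(U)$. Suppose for contradiction that $\hd_1(U) \leqslant \gd(U)$. The key point is that by construction $U$ sits inside $V$ with $U_j = 0$ for $j$ below the relevant generating degree, so $U$ itself will be "generated in degrees where $V$ already had its $\sharp$-filtered layers removed"; I would argue that then Lemma \ref{modules generated in one degree} (or its evident multi-degree generalization, proved by the same syzygy argument) forces $U$ to be $\sharp$-filtered, and an extension of a $\sharp$-filtered module by a $\sharp$-filtered module is $\sharp$-filtered, so $V$ would be $\sharp$-filtered — contradiction. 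The cleanest route is probably to engineer the construction so that $\gd(U) < \gd(V)$ strictly, run the whole argument by induction on $\gd(V)$, and at each stage invoke Lemma \ref{modules generated in one degree} in the form: a module generated in one degree with vanishing $\hd_1$ relative to its generating degree is $\sharp$-filtered.

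The main obstacle I expect is the bookkeeping that makes $U$ land "below" in a way that lets Lemma \ref{modules generated in one degree} apply to it — i.e. verifying that after removing the top $\sharp$-filtered layer the homologies $H_s$ for $s \geqslant 1$ are genuinely unchanged (this uses that basic $\sharp$-filtered modules are acyclic for $\tC_0 \otimes_{\tC} -$ in positive degrees, which is exactly Lemma \ref{higher homologies of sharp-filtered modules vanish}) and that the surjection $\tilde V \to V$ in the one-degree case can be chosen so that its kernel $K$ has generating degree $\leqslant \gd(V)$; then $K$ plays the role of $U$ after transposing the sequence, and the non-$\sharp$-filtered hypothesis is precisely what prevents $K$ from satisfying $\hd_1(K) \leqslant \gd(K)$.
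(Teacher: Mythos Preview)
Your proposal circles the right construction but misses the organizing principle that makes it work, and the ``moreover'' argument rests on a false claim.

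\textbf{The construction.} The paper's proof hinges on a case split you never make: if $\hd_1(V) > \gd(V)$, take $U = V$, $W = 0$, done. Otherwise $\hd_1(V) \leqslant \gd(V) = n$, and only under \emph{this hypothesis} does the top-degree quotient $V'' = V/V'$ become $\sharp$-filtered: the long exact sequence gives $\hd_1(V'') \leqslant \max\{\gd(V'),\, \hd_1(V)\} \leqslant n = \gd(V'')$, and then Lemma~\ref{modules generated in one degree} applies. You instead say ``take $W'' = V''$ if $\hd_1(V'') \leqslant n$, and otherwise iterate,'' but $V''$ is generated in the single degree $n$, so there is nothing to iterate---the ``otherwise'' branch is empty, and you have not verified the ``if'' branch. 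Once this case split is in place there is no need for any pushout/pullback assembly: one simply recurses on $V'$, and $U$ is always a genuine submodule of $V$ (namely the submodule generated by $\bigoplus_{j \leqslant k} V_j$ for the $k$ at which the recursion halts), with $W = V/U$ carrying a filtration by basic $\sharp$-filtered quotients.

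\textbf{The ``moreover'' clause.} Your contradiction argument assumes that $\hd_1(U) \leqslant \gd(U)$ forces $U$ to be $\sharp$-filtered via ``the evident multi-degree generalization'' of Lemma~\ref{modules generated in one degree}. That generalization is false: take $V = \tC(0,0) \oplus \tC(2,-)$, which has $\gd(V) = 2$ and $\hd_1(V) = 1 \leqslant 2$, yet $V$ is not $\sharp$-filtered (its summand $\tC(0,0)$ is torsion). Your side claim that ``$U_j = 0$ for $j$ below the relevant generating degree'' is also wrong---in the actual construction $U_j = V_j$ for small $j$. The correct argument is much simpler and requires no contradiction: the recursion \emph{stops} precisely when $\hd_1 > \gd$ (that is the halting condition), and if it never stops it runs down to $U = 0$, exhibiting $V$ as $\sharp$-filtered. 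So when $V$ is not $\sharp$-filtered, the halting condition $\hd_1(U) > \gd(U)$ is automatic.
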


\begin{proof}
Suppose that $V$ is nonzero. If $\hd_1(V) > \gd(V)$, then we can let $U = V$ and $W = 0$, so the conclusion holds trivially. Otherwise, one has a short exact sequence
\begin{equation*}
0 \to V' \to V \to V'' \to 0
\end{equation*}
where $V'$ is the submodule of $V$ generated by $\bigoplus _{i \leqslant \gd(V) - 1} V_i$, which might be 0. Then $V'' \neq 0$. The long exact sequence
\begin{equation*}
\ldots \to H_1(V) \to H_1(V'') \to H_0(V') \to H_0(V) \to H_0(V'') \to 0
\end{equation*}
tells us that
\begin{equation*}
\hd_1(V'') \leqslant \max \{ \gd(V'), \, \hd_1(V) \} \leqslant \gd(V) = \gd(V'').
\end{equation*}
But the previous lemma asserts that $V''$ is $\sharp$-filtered.

If $\gd(V') < \hd_1(V')$, then the above short exact sequence is what we want. Otherwise, we can continue this process for $V'$. Since $\gd(V') < \gd(V)$, it must stop after finitely many steps. Since $V$ is not $\sharp$-filtered, in the last step we must get a submodule $U$ of $V$ with $\hd_1(U) > \gd(U)$. Moreover, since $W$ is $\sharp$-filtered, one easily deduces that $H_s(U) \cong H_s(V)$ for $s \geqslant 1$.
\end{proof}

A finitely generated $\C$-module is $\sharp$-filtered if and only if its higher homologies vanish, and if and only if its first homology vanishes.

\begin{theorem} \label{characterizations of sharp-filtered modules}
Let $\mk$ be a commutative Noetherian ring and let $V$ be a finitely generated $\C$-module. Then the following statements are equivalent:
\begin{enumerate}
\item $V$ is a $\sharp$-filtered module;
\item $H_i(V) = 0$ for all $i \geqslant 1$;
\item $H_1(V) = 0$.
\end{enumerate}
\end{theorem}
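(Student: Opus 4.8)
The plan is to establish the cycle $(1) \Rightarrow (2) \Rightarrow (3) \Rightarrow (1)$. The implication $(1) \Rightarrow (2)$ is precisely Lemma \ref{higher homologies of sharp-filtered modules vanish}, and $(2) \Rightarrow (3)$ is trivial, so all the substance lies in $(3) \Rightarrow (1)$, which I would deduce by contradiction from Corollary \ref{reduction}.

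In detail: assume $H_1(V) = 0$ and suppose, for contradiction, that $V$ is not $\sharp$-filtered. Corollary \ref{reduction} then yields a short exact sequence $0 \to U \to V \to W \to 0$ in which $W$ is $\sharp$-filtered, $H_s(U) \cong H_s(V)$ for every $s \geqslant 1$, and --- precisely because $V$ is not $\sharp$-filtered --- the strict inequality $\hd_1(U) > \gd(U)$ holds. But $H_1(U) \cong H_1(V) = 0$, so $\hd_1(U) = \td(H_1(U)) = -\infty$, which cannot be strictly larger than $\gd(U) \in \Z \cup \{ -\infty \}$. This contradiction forces $V$ to be $\sharp$-filtered, closing the cycle.

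Should one wish to avoid citing Corollary \ref{reduction} wholesale, I would instead run a direct induction on $n = \gd(V)$: for $n \geqslant 0$, let $V'$ be the submodule generated in degrees $\leqslant n-1$ and $V'' = V/V'$, so $V''$ is generated in degree $n$. The long exact homology sequence of $0 \to V' \to V \to V'' \to 0$, combined with $H_1(V) = 0$, gives an injection $H_1(V'') \hookrightarrow H_0(V')$, hence $\hd_1(V'') \leqslant \gd(V') \leqslant n-1 < \gd(V'')$; Lemma \ref{modules generated in one degree} then shows $V''$ is (basic) $\sharp$-filtered. By Lemma \ref{higher homologies of sharp-filtered modules vanish} the term $H_2(V'')$ vanishes, so the same long exact sequence gives $H_1(V') = 0$, and $V'$ is $\sharp$-filtered by induction; finally $V$ is $\sharp$-filtered because an extension of a $\sharp$-filtered module by a $\sharp$-filtered module is $\sharp$-filtered (lift a $\sharp$-filtration of $V''$ to submodules of $V$ containing $V'$ and splice it onto a $\sharp$-filtration of $V'$). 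Either way there is no serious obstacle: the genuine work was already carried out in Lemma \ref{modules generated in one degree} and Corollary \ref{reduction}, and the only point demanding care is the bookkeeping with $-\infty$ in the degenerate cases.
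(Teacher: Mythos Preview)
Your primary argument is correct and essentially identical to the paper's own proof: the paper cites Lemma \ref{higher homologies of sharp-filtered modules vanish} for $(1)\Rightarrow(2)$, notes $(2)\Rightarrow(3)$ is trivial, and for $(3)\Rightarrow(1)$ argues by contradiction exactly as you do, invoking Corollary \ref{reduction} to produce $U$ with $\hd_1(U) > \gd(U)$ while $H_1(U)\cong H_1(V)=0$. Your alternative inductive argument is also sound and is in effect an unwinding of the proof of Corollary \ref{reduction}, so it does not constitute a genuinely different route.
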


\begin{proof}
$(1) \Rightarrow (2)$: This is Lemma \ref{higher homologies of sharp-filtered modules vanish}.

$(2) \Rightarrow (3)$: Trivial.

$(3) \Rightarrow (1)$: Suppose that $H_1(V) = 0$. That is, $\hd_1(V) = - \infty$. If $V$ is not $\sharp$-filtered, then by Corollary \ref{reduction}, there exists a short exact sequence
\begin{equation*}
0 \to U \to V \to W \to 0
\end{equation*}
such that $H_1(U) \cong H_1(V) = 0$ and $\hd_1(U) > \gd(U)$. This is absurd.
\end{proof}

An extra bonus of this characterization is that: the category of finitely generated $\sharp$-filtered modules is closed under taking kernels and extensions.

\begin{corollary} \label{kernels of sharp-filtered modules are sharp-filtered}
Let $0 \to U \to V \to W \to 0$ be a short exact sequence.
\begin{enumerate}
\item If both $V$ and $W$ are $\sharp$-filtered, so is $U$.
\item If both $U$ and $W$ are $\sharp$-filtered, so is $V$.
\end{enumerate}
\end{corollary}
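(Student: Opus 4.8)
The plan is to deduce both statements from the homological characterization of $\sharp$-filtered modules proved in Theorem \ref{characterizations of sharp-filtered modules}, namely that a finitely generated $\C$-module is $\sharp$-filtered if and only if its first homology $H_1$ vanishes, combined with the long exact sequence of homologies attached to the given short exact sequence $0 \to U \to V \to W \to 0$. The key input is that $H_s$ is computed as $\Tor_s^{\tC}(\tC_0, -)$, so a short exact sequence produces a long exact sequence
\begin{equation*}
\ldots \to H_2(W) \to H_1(U) \to H_1(V) \to H_1(W) \to H_0(U) \to H_0(V) \to H_0(W) \to 0.
\end{equation*}

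For statement (2), assume $U$ and $W$ are both $\sharp$-filtered. By Lemma \ref{higher homologies of sharp-filtered modules vanish}, all higher homologies of $U$ and $W$ vanish; in particular $H_1(U) = 0$ and $H_1(W) = 0$. Feeding this into the long exact sequence, the term $H_1(V)$ is sandwiched between $H_1(U) = 0$ and $H_1(W) = 0$, hence $H_1(V) = 0$. By the implication $(3) \Rightarrow (1)$ of Theorem \ref{characterizations of sharp-filtered modules}, $V$ is $\sharp$-filtered. This part is entirely routine.

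For statement (1), assume $V$ and $W$ are both $\sharp$-filtered. Again by Lemma \ref{higher homologies of sharp-filtered modules vanish} we have $H_1(V) = 0$ and $H_2(W) = 0$. Now the relevant segment of the long exact sequence reads $H_2(W) \to H_1(U) \to H_1(V)$, i.e. $0 \to H_1(U) \to 0$, forcing $H_1(U) = 0$; once more Theorem \ref{characterizations of sharp-filtered modules} gives that $U$ is $\sharp$-filtered. The only subtlety worth a remark is that this argument silently uses the vanishing of $H_2(W)$, not merely $H_1$, which is why it is convenient to invoke Lemma \ref{higher homologies of sharp-filtered modules vanish} (all higher homologies vanish) rather than only clause (3) of the theorem; since $W$ is assumed $\sharp$-filtered this is immediate.

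I do not anticipate a genuine obstacle here: both parts are formal consequences of exactness of $\Tor$ long exact sequences together with the already-established characterization. If anything, the one point to be careful about is making sure the modules involved remain in the category of finitely generated $\C$-modules throughout (so that $H_1$ is again finitely generated and the characterization applies), but this is automatic since $\C$ is locally Noetherian and finite generation is preserved under taking submodules, quotients, and extensions.
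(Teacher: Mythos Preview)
Your proof is correct and follows exactly the approach sketched in the paper, which simply says to use the long exact sequence induced by the given short exact sequence together with Theorem \ref{characterizations of sharp-filtered modules}. Your version is in fact more explicit than the paper's one-line proof, and your remark about needing $H_2(W)=0$ (not just $H_1$) for part (1) is a useful clarification.
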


\begin{proof}
Use the long exact sequence induced by the given short exact sequence and the above theorem.
\end{proof}

\subsection{Properties of $\sharp$-filtered modules}

In this subsection we explore certain important properties of $\sharp$-filtered modules.

For a $\sharp$-filtered module $V$, one knows that it has a filtration by basic $\sharp$-filtered modules from the definition. The following result tells us an explicit construction of such a filtration.

\begin{proposition} \label{structure of sharp-filtered modules}
Let $V$ be a $\sharp$-filtered module with $\gd(V) = n$. Then there exists a chain of $\C$-modules
\begin{equation*}
0 = V^{-1} \subseteq V^0 \subseteq V^1 \subseteq \ldots \subseteq V^n = V
\end{equation*}
such that for $-1 \leqslant s \leqslant n-1$, $V^s$ is the submodule of $V$ generated by $\bigoplus _{i \leqslant s} V_i$ and $V^{s+1} / V^s$ is 0 or a basic $\sharp$-filtered module.
\end{proposition}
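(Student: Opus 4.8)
The plan is to build the filtration explicitly from the bottom up, using induction on $n = \gd(V)$ and the homological characterization in Theorem~\ref{characterizations of sharp-filtered modules} at every stage. Define $V^s$ to be the submodule of $V$ generated by $\bigoplus_{i \leqslant s} V_i$; these are clearly nested, $V^{-1} = 0$, and $V^n = V$ since $\gd(V) = n$. What must be shown is that each quotient $V^{s+1}/V^s$ is either $0$ or a basic $\sharp$-filtered module, i.e.\ isomorphic to $\tC \otimes_{\mk S_{s+1}} T$ for some finitely generated $\mk S_{s+1}$-module $T$.

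First I would dispose of the top quotient $V/V^{n-1}$. This module is generated in the single degree $n$, so by Lemma~\ref{modules generated in one degree} it suffices to check $\hd_1(V/V^{n-1}) \leqslant \gd(V/V^{n-1}) = n$. Here is where the hypothesis that $V$ itself is $\sharp$-filtered enters: by Lemma~\ref{higher homologies of sharp-filtered modules vanish}, $H_s(V) = 0$ for $s \geqslant 1$; feeding the short exact sequence $0 \to V^{n-1} \to V \to V/V^{n-1} \to 0$ into the long exact homology sequence gives $H_1(V/V^{n-1}) \cong H_0(V^{n-1})/(\text{image of }H_0(V))$ up to the relevant terms, and since $V^{n-1}$ is generated in degrees $\leqslant n-1$ we get $\gd(V^{n-1}) \leqslant n-1 < n$, forcing $\hd_1(V/V^{n-1}) \leqslant n-1 \leqslant n$. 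Hence $V/V^{n-1}$ is $\sharp$-filtered and, being generated in one degree, the argument in Lemma~\ref{modules generated in one degree} shows it is in fact \emph{basic}, isomorphic to $\tC \otimes_{\mk S_n} V_n$.

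Next I would pass to $V^{n-1}$ and set up the induction. From the short exact sequence $0 \to V^{n-1} \to V \to V/V^{n-1} \to 0$ with both $V$ and $V/V^{n-1}$ now known to be $\sharp$-filtered, Corollary~\ref{kernels of sharp-filtered modules are sharp-filtered}(1) gives that $V^{n-1}$ is $\sharp$-filtered. Moreover $\gd(V^{n-1}) \leqslant n-1$, and the submodule of $V^{n-1}$ generated by $\bigoplus_{i \leqslant s} V_i$ coincides with $V^s$ for $s \leqslant n-1$ (since $V^{n-1}$ contains all of $\bigoplus_{i\leqslant n-1} V_i$ and any such submodule of $V$ already lands inside $V^{n-1}$). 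So the inductive hypothesis applied to $V^{n-1}$ produces exactly the filtration $0 = V^{-1} \subseteq V^0 \subseteq \cdots \subseteq V^{n-1}$ with each $V^{s+1}/V^s$ zero or basic $\sharp$-filtered for $s \leqslant n-2$. Appending the top step completes the chain. The base case $n = -\infty$ (i.e.\ $V = 0$) or $n = 0$ is immediate.

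The main obstacle is the very first reduction: one must be sure that the top quotient $V/V^{n-1}$, which a priori is only known to be \emph{some} finitely generated module generated in degree $n$, is genuinely basic $\sharp$-filtered rather than merely $\sharp$-filtered. This is precisely the content of the second proof of Lemma~\ref{modules generated in one degree} (or the snake-lemma diagram in the first proof), which shows that a module generated in a single degree $n$ with $\hd_1 \leqslant n$ is forced to equal $\tC \otimes_{\mk S_n}$(its degree-$n$ part). Everything else — the nestedness of the $V^s$, the identification of the $V^s$ inside $V^{n-1}$, and the propagation of the $\sharp$-filtered property down the chain via Corollary~\ref{kernels of sharp-filtered modules are sharp-filtered} — is routine bookkeeping once that point is secured.
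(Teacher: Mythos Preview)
Your approach is essentially identical to the paper's: induct on $\gd(V)$, peel off the top quotient $V/V^{n-1}$, show it is basic $\sharp$-filtered via Lemma~\ref{modules generated in one degree} and the long exact sequence, then use Corollary~\ref{kernels of sharp-filtered modules are sharp-filtered} to recurse on $V^{n-1}$. One small slip to fix: from the long exact sequence with $H_1(V)=0$ you get that $H_1(V/V^{n-1})$ is a \emph{submodule} of $H_0(V^{n-1})$ (the kernel of $H_0(V^{n-1}) \to H_0(V)$), not a quotient as you wrote --- but this still yields $\hd_1(V/V^{n-1}) \leqslant \gd(V^{n-1}) \leqslant n-1$, so your conclusion is unaffected.
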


\begin{proof}
We prove the statement by induction on $\gd(V)$. The conclusion holds obviously for $n = 0$ or $n = -\infty$. Suppose that $n \geqslant 1$. We have a short exact sequence
\begin{equation*}
0 \to V' \to V \to V'' \to 0
\end{equation*}
such that $V'$ is the submodule generated by $\bigoplus _{i \leqslant n-1} V_i$. Then $V''$ is generated in degree $n$. The conclusion follows immediately after we show that $V''$ is a $\sharp$-filtered module. However, we can apply the same argument as in the proof of Corollary \ref{reduction}.
\end{proof}

\begin{remark} \normalfont
So far we do not use any special property of the category $\FI$ to prove the above results in this section. Therefore, all these results hold for $\mk$-linear categories $\underline {\mathcal{D}}$ satisfying the following conditions:
\begin{enumerate}
\item objects of $\underline {\mathcal{D}}$ are parameterized by nonnegative integers;
\item there is no nonzero morphisms from bigger objects to smaller objects;
\item $\underline {\mathcal{D}}$ is \emph{locally finite}; that is, $\underline {\mathcal{D}} (x, y)$ are finitely generated $\mk$-modules for $x, y \in \Z$;
\item $\underline {\mathcal{D}} (x, y)$ is a right projective $\underline {\mathcal{D}} (x, x)$-module for $x, y \in \Z$.
\end{enumerate}
\end{remark}

Now we begin to use some special properties of  $\FI$ to deduce more results. The following proposition tells us that the property of being $\sharp$-filtered is preserved by functors $\Sigma$ and $D$.

\begin{proposition} \label{sharp-filtered modules are torsionless}
Let $V$ be a finitely generated $\C$-module. If $V$ is $\sharp$-filtered, then it is torsionless. Moreover, $\Sigma V$ and $DV$ are also $\sharp$-filtered.
\end{proposition}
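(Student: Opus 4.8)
The plan is to handle the three assertions in the order they appear, using the basic filtration supplied by Proposition \ref{structure of sharp-filtered modules} to reduce everything to the case of a basic $\sharp$-filtered module $\tC \otimes_{\mk S_i} T$, and then to invoke the homological characterization of Theorem \ref{characterizations of sharp-filtered modules} to upgrade the basic case to the general case for the $\Sigma$ and $D$ statements.

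First I would prove torsionlessness. A basic $\sharp$-filtered module $W = \tC \otimes_{\mk S_i} T$ is, as a $\mk$-module in each degree $m \geqslant i$, a quotient of $\underline{\C(i,m)} \otimes_{\mk S_i} T$; since $S_i = \C(i,i)$ acts freely on $\C(i,m)$ from the right, $\tC 1_i$ is right free over $\mk S_i$ (this is the Lemma just stated), so $W_m = \tC(i,m) \otimes_{\mk S_i} T = \underline{\C(i,m)/S_i} \otimes_\mk T$ is actually a \emph{free} construction in the relevant sense. The point is that left multiplication by any $\alpha \in \C(m, m+1)$ is injective on $W_m$, because $\alpha$ acts injectively on the basis $\C(i,m)/S_i \hookrightarrow \C(i,m+1)/S_i$ and tensoring a split injection of $\mk$-modules with $T$ stays injective. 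Using the characterization of torsionlessness recalled in the proof of Proposition 2.5(4) — namely, $\alpha \cdot v \neq 0$ for all nonzero $v$ and all $\alpha \in \C(m,m+1)$ — this shows every basic $\sharp$-filtered module is torsionless. For the general case, take the filtration $0 = V^{-1} \subseteq \cdots \subseteq V^n = V$ with basic quotients; torsionlessness is closed under extensions (a torsion element of $V$ would map to a torsion element of $V/V^{n-1}$, hence lie in $V^{n-1}$, and one induces down), so $V$ is torsionless.

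Next, for $\Sigma V$: since $V$ is torsionless, Proposition \ref{properties of D}(2) gives a short exact sequence $0 \to V \to \Sigma V \to DV \to 0$. Apply the long exact sequence of $\Tor^{\tC}(\tC_0, -)$: for $s \geqslant 2$ we get $H_s(V) \to H_s(\Sigma V) \to H_s(DV) \to H_{s-1}(V)$, and for $s = 1$ we get $H_1(V) \to H_1(\Sigma V) \to H_1(DV) \to H_0(V)$. By Theorem \ref{characterizations of sharp-filtered modules}, $H_s(V) = 0$ for all $s \geqslant 1$, so it suffices to know $DV$ is $\sharp$-filtered (equivalently $H_s(DV) = 0$ for $s \geqslant 1$) to conclude $H_s(\Sigma V) = 0$ for $s \geqslant 1$ and hence that $\Sigma V$ is $\sharp$-filtered. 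So the real content is the claim about $DV$, and I would prove that directly and separately, then feed it back in. Alternatively — and this is probably cleaner — I would first show $\Sigma$ and $D$ preserve \emph{basic} $\sharp$-filtered modules by an explicit computation, then bootstrap to the general case via the filtration and Corollary \ref{kernels of sharp-filtered modules are sharp-filtered}.

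Concretely, for a basic module $W = \tC \otimes_{\mk S_i} T$ I expect $DW \cong \bigl(\tC \otimes_{\mk S_{i-1}} (\mathrm{Res}^{S_i}_{S_{i-1}} T)\bigr)^{?}$ up to combinatorial bookkeeping — the model computation is Proposition \ref{properties of D}(1), $D\tC(i,-) \cong \tC(i-1,-)^{\oplus i}$, which is exactly the case $T = \mk S_i$; for general $T$ one writes $T$ as a quotient of a free $\mk S_i$-module, uses right-exactness of $D$ and the fact that $D$ commutes with the relevant tensor constructions, and identifies the result as a $\sharp$-filtered module (generated in degree $i - 1$, or a basic module over $S_{i-1}$). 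For $\Sigma W$, Proposition 2.5(1) together with the filtration, or the short exact sequence $0 \to W \to \Sigma W \to DW \to 0$ with $DW$ already known basic $\sharp$-filtered and $W$ basic $\sharp$-filtered, gives $\Sigma W$ $\sharp$-filtered by Corollary \ref{kernels of sharp-filtered modules are sharp-filtered}(2). Then for general $\sharp$-filtered $V$ with basic filtration $\{V^s\}$, applying the exact functor $\Sigma$ (and the right-exact $D$, noting $D$ is exact on short exact sequences of torsionless modules by Corollary \ref{torsionless modules induce diagram}) yields filtrations of $\Sigma V$ and $DV$ with basic $\sharp$-filtered subquotients, so both are $\sharp$-filtered.

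The main obstacle I anticipate is the bookkeeping in identifying $D(\tC \otimes_{\mk S_i} T)$ explicitly: one must track how the self-embedding $\iota$ and the inclusion $\pi_i \colon [i] \to [i+1]$ interact with the right $S_i$-action and with $T$, to see that the cokernel of $V \to \Sigma V$ is again of the form "$\tC$ tensored over a symmetric group with a restriction of $T$" rather than something more complicated. This is exactly the place where a special property of $\FI$ (as flagged in the remark preceding the proposition) must enter, and it is where I would spend the most care; everything else is a formal consequence of the long exact sequence, Theorem \ref{characterizations of sharp-filtered modules}, and Corollary \ref{kernels of sharp-filtered modules are sharp-filtered}.
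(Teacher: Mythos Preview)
Your proposal is correct and follows essentially the same route as the paper: reduce to the basic case via the filtration of Proposition~\ref{structure of sharp-filtered modules} together with extension closure (the paper phrases the inductive step as induction on $\gd(V)$ and uses Corollary~\ref{torsionless modules induce diagram} to get the $3\times 3$ diagram), and handle basic $\sharp$-filtered modules separately. The only real difference is that the paper outsources the basic-case facts --- the injectivity of $W \to \Sigma W$ and that $\Sigma W$, $DW$ are $\sharp$-filtered for basic $W = \tC \otimes_{\mk S_i} T$ --- to \cite[Lemma~2.2]{N}, whereas you propose to verify them directly; your sketch of that computation (injectivity on $S_i$-orbits of injections, then tensor with $T$) is on the right track and is exactly the ``special property of $\FI$'' the paper alludes to.
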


\begin{proof}
Clearly, we can suppose that $V$ is nonzero. Since $V$ is a $\sharp$-filtered module, it has a filtration such that each component of which has the form $\tC \otimes _{\mk S_i} T_i$, where each $T_i$ is a finitely generated $\mk S_i$-module. Since $V$ is a torsionless module if and only if $\Hom _{\tC} (\tC_0, V)$ is 0, to show that $V$ is torsionless, it suffices to verify $\Hom _{\tC} (\tC_0, \tC \otimes _{\mk S_i} T_i) = 0$; that is, each $\tC \otimes _{\mk S_i} T_i$ is torsionless. By \cite[Lemma 2.2]{N}, there is a natural embedding
\begin{equation*}
0 \to \tC \otimes _{\mk S_i} M_i \to \Sigma (\tC \otimes _{\mk S_i} M_i).
\end{equation*}
By Proposition \ref{properties of D}, this $\sharp$-filtered module must be torsionless.

To prove the second statement, we use induction on the generating degree $n = \gd(V)$. For $n = 0$, the conclusion is implied by \cite[Lemma 2.2]{N}. For $n \geqslant 1$, one considers the short exact sequence
\begin{equation*}
0 \to V' \to V \to V'' \to 0
\end{equation*}
where $V'$ is the submodule of $V$ generated by $\bigoplus _{i \leqslant n-1} V_i$. By Proposition \ref{structure of sharp-filtered modules}, all terms in this short exact sequence are $\sharp$-filtered. Moreover, $V''$ is a basic $\sharp$-filtered module. Since we just proved that they are all torsionless, by Corollary \ref{torsionless modules induce diagram}, we have a commutative diagram each row or column of which is an short exact sequence:
\begin{equation*}
\xymatrix{
0 \ar[r] & V' \ar[r] \ar[d] & \Sigma V' \ar[r] \ar[d] & DV' \ar[r] \ar[d] & 0\\
0 \ar[r] & V \ar[r] \ar[d] & \Sigma V \ar[r] \ar[d] & DV \ar[r] \ar[d] & 0\\
0 \ar[r] & V'' \ar[r] & \Sigma V'' \ar[r] & DV'' \ar[r] & 0.
}
\end{equation*}
By induction hypothesis, $\Sigma V'$ is $\sharp$-filtered. Moreover, $\Sigma V''$ is $\sharp$-filtered by \cite[Lemma 2.2]{N}, so $\Sigma V$ is $\sharp$-filtered as well. The same reasoning tells us that $DV$ are also $\sharp$-filtered.
\end{proof}

\subsection{Finitely generated $\C$-modules become $\sharp$-filtered under enough shifts.}

The main task of this subsection is to use functor $D$ as well as the homological characterization of $\sharp$-filtered modules to show that for every finitely generated $\C$-module $V$, $\Sigma_N V$ is $\sharp$-filtered for $N \gg 0$.

As the starting point, we consider $\C$-modules generated in degree 0.

\begin{lemma} \label{modules with generating degree 0}
Every torsionless $\C$-module generated in degree 0 is $\sharp$-filtered.
\end{lemma}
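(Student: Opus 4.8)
The plan is to prove this by induction on $\gd(V)$, but with a twist: we will actually induct on a well-chosen quantity associated to $V$ and reduce to the case of modules generated in degree $0$ with small torsion data. Let $V$ be a torsionless $\C$-module generated in degree $0$. Write $T = V_0$, a finitely generated $\mk S_0 = \mk$-module, so that there is a canonical surjection $\tC \otimes_{\mk S_0} T = \tC \otimes_{\mk} T \twoheadrightarrow V$; call its kernel $K$. This kernel $K$ is itself a $\C$-module, and since the projective-like module $\tC \otimes_{\mk} T$ has $H_s = 0$ for $s \geqslant 1$ (Lemma \ref{higher homologies of sharp-filtered modules vanish}), the long exact sequence in homology gives $H_1(V) \cong H_0(K) \oplus \text{(boundary terms)}$; more precisely $H_1(V)$ surjects onto/injects from $H_0(K)$ in the usual way, so $\gd(K) = \hd_1(V)$ whenever $K \neq 0$. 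Thus, by Lemma \ref{modules generated in one degree} (or Theorem \ref{characterizations of sharp-filtered modules}), it suffices to show $H_1(V) = 0$, equivalently $K = 0$.

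First I would exploit the hypothesis that $V$ is torsionless together with the short exact sequence $0 \to V \to \Sigma V \to DV \to 0$ of Proposition \ref{properties of D}(2). Applying $D$ and $\Sigma$ repeatedly, and using $\gd(DV) = \gd(V) - 1$ when $\gd(V) \geqslant 1$ (Proposition \ref{properties of D}(3)), one sees that $D V$ has generating degree $-\infty$ here — i.e. $DV = 0$ — precisely because $\gd(V) = 0$. So in fact the map $\pi^*_V : V \to \Sigma V$ is an \emph{isomorphism}: a torsionless module generated in degree $0$ satisfies $V \cong \Sigma V$. This is the structural heart of the argument. It says $V_n \cong V_{n+1}$ compatibly for all $n$, and iterating, $V$ is "constant" along the tower in a precise functorial sense.

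Given $V \cong \Sigma V$, I would then argue directly that $V$ must be a quotient of $\tC \otimes_{\mk} V_0$ with trivial kernel. Concretely: the $\mk$-module $V_0$ generates $V$, and the iterated isomorphism $V \cong \Sigma^n V$ identifies $V_n$ with $V_0$ and the generating map $\C(0,n) \otimes V_0 \to V_n$ with the natural map coming from $\tC \otimes_{\mk} V_0$; since $S_0$ is trivial, $\tC \otimes_{\mk} V_0$ is exactly the "free" $\C$-module on the $\mk$-module $V_0$, so the canonical surjection $\tC \otimes_{\mk} V_0 \to V$ is a bijection on each object $[n]$ — hence an isomorphism, hence $K = 0$ and $V$ is basic $\sharp$-filtered. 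Alternatively, and perhaps more cleanly, one checks $H_1(V) = 0$ using $V \cong \Sigma V$ together with Lemma \ref{hd under shift}: since $\hd_0(V) = 0$ and $\hd_1(\Sigma V) = \hd_1(V)$, the inequality $\hd_1(V) \leqslant \max\{\hd_0(V)+1,\ \hd_1(\Sigma V)+1\}$ is not yet a contradiction, so one must instead feed $V \cong \Sigma V$ back in a descending argument: any torsion in $H_1(V)$ lives in some bounded range of degrees, but the isomorphism $V \cong \Sigma V$ forces that range to be empty.

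The main obstacle I anticipate is making the last step fully rigorous — namely, promoting the object-wise isomorphisms $V_n \cong V_{n+1}$ to the statement that the canonical map $\tC \otimes_{\mk} V_0 \to V$ is an isomorphism, rather than merely that source and target have isomorphic values. One has to check that the isomorphism $\pi^*_V$ is compatible with all the $\FI$-morphisms, not just the shift, and this is where a small amount of genuine $\FI$-combinatorics enters: every injection $[0] \to [n]$ factors through the standard inclusion $\pi_{n-1}\cdots\pi_0$ up to the $S_n$-action, and torsionlessness plus $V \cong \Sigma V$ pins down how $V$ sees these. I would isolate this as the technical core, citing \cite[Lemma 2.2]{N} for the behavior of $\tC \otimes_{\mk} V_0$ under $\Sigma$, and then the homological conclusion $H_1(V) = 0$ (hence $\sharp$-filtered, by Theorem \ref{characterizations of sharp-filtered modules}) follows formally.
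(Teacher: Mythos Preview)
Your core argument is exactly the paper's: torsionlessness plus $\gd(V)=0$ gives $DV=0$ by Proposition~\ref{properties of D}(3), hence $V\cong\Sigma V$, hence $V_n\cong V_0$ for all $n$, hence $V\cong\tC\otimes_{\mk}V_0$. The ``obstacle'' you flag in the last paragraph dissolves immediately once you observe that $|\C(0,n)|=1$, so the canonical map $(\tC\otimes_{\mk}V_0)_n = V_0 \to V_n$ is literally the composite of the isomorphisms $(\pi_i)_*$ and hence bijective; the surrounding scaffolding (the induction framing, the kernel $K$, the detour through Lemma~\ref{hd under shift}) is unnecessary and can be dropped.
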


\begin{proof}
Since $V$ is torsionless, one gets a short exact sequence
\begin{equation*}
0 \to V \to \Sigma V \to DV \to 0.
\end{equation*}
By Proposition \ref{properties of D}, $DV = 0$, so $V \cong \Sigma V$. In particular, one has
\begin{equation*}
V_0 \cong V_1 \cong V_2 \cong \ldots.
\end{equation*}
Thus $V \cong \tC \otimes _{\tC(0,0)} V_0$.
\end{proof}

\begin{lemma} \label{technical lemma}
Let $V$ be a finitely generated torsionless $\C$-module. If $DV$ is $\sharp$-filtered, then $\gd(V) \geqslant \hd_1(V)$.
\end{lemma}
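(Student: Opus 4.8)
The plan is to read the bound off from how the functor $D$ acts on homological degrees; the key input is Corollary \ref{compare the first two degrees}. First I would dispose of the degenerate low-degree cases. If $V = 0$ both sides equal $-\infty$. If $\gd(V) = 0$, then $V$ is a torsionless $\C$-module generated in degree $0$, hence $\sharp$-filtered by Lemma \ref{modules with generating degree 0}, so $\hd_1(V) = -\infty \leqslant 0 = \gd(V)$. Thus the only case requiring work is $\gd(V) = n \geqslant 1$, and in that case $DV \neq 0$ with $\gd(DV) = n-1$ by Proposition \ref{properties of D}(3).

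The main step is then a single application of Corollary \ref{compare the first two degrees} with $s = 1$. Since $V$ is torsionless and $\gd(V) \geqslant 1$, that corollary yields the equality
\[
\max \{ \gd(V), \, \hd_1(V) \} = \max \{ \hd_0(DV), \, \hd_1(DV) \} + 1 .
\]
This is the point at which the hypothesis enters: because $DV$ is $\sharp$-filtered, Theorem \ref{characterizations of sharp-filtered modules} forces $H_1(DV) = 0$, i.e.\ $\hd_1(DV) = -\infty$, so the right-hand side collapses to $\gd(DV) + 1 = n$. Comparing with the left-hand side gives $\max\{\gd(V), \hd_1(V)\} = n = \gd(V)$, which is precisely $\hd_1(V) \leqslant \gd(V)$.

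I do not expect a genuine obstacle here: essentially all the difficulty has already been absorbed into Corollary \ref{compare the first two degrees} and, behind it, the fact from Proposition \ref{degrees of projective resolutions} that $D$ sends an adaptable projective resolution of a torsionless module to an adaptable projective resolution of its image while dropping generating degrees by one. The only point needing a little care is the case split on $\gd(V)$, since the equality clause of Corollary \ref{compare the first two degrees} is available only when $\gd(V) \geqslant 1$; the remaining cases are immediate from Lemma \ref{modules with generating degree 0}. If one prefers to bypass Corollary \ref{compare the first two degrees}, the same conclusion follows directly: take an adaptable projective resolution $P^\bullet \to V$, apply $D$ to obtain an adaptable projective resolution $DP^\bullet \to DV$ (Proposition \ref{degrees of projective resolutions}); since $H_s(DV) = 0$ for $s \geqslant 1$, Corollary \ref{estimate degrees recursively} forces $\gd(DP^s) \leqslant \gd(DV) = n-1$ for every $s$, hence $\gd(P^s) \leqslant n$ for every $s$ by the degree relation, and in particular $\hd_1(V) \leqslant \gd(P^1) \leqslant n = \gd(V)$.
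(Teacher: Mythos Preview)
Your proof is correct and follows essentially the same route as the paper: handle $\gd(V)\leqslant 0$ via Lemma~\ref{modules with generating degree 0}, and for $\gd(V)\geqslant 1$ apply the equality case of Corollary~\ref{compare the first two degrees} with $s=1$ together with $\hd_1(DV)=-\infty$ to collapse the right-hand side to $\gd(DV)+1=\gd(V)$. Your alternative argument via Proposition~\ref{degrees of projective resolutions} is also valid and simply unpacks what Corollary~\ref{compare the first two degrees} already encapsulates.
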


\begin{proof}
One may assume that $V$ is nonzero. If $\gd(V) = 0$, then $V$ is $\sharp$-filtered by Lemma \ref{modules with generating degree 0} and the conclusion holds. For $\gd(V) \geqslant 1$, since $\hd_1(DV) = - \infty$, by Corollary \ref{compare the first two degrees},
\begin{equation*}
\max \{ \gd(V), \, \hd_1(V) \} = \max \{ \gd(DV) + 1, \, \hd_1(DV) + 1 \} = \gd(DV) + 1 = \gd(V),
\end{equation*}
where the last equality follows from Proposition \ref{properties of D}. This implies the desired result.
\end{proof}

The following lemma, similar to Proposition \ref{degrees of projective resolutions}, shows the connections between  a finitely generated $\C$-module $V$ and $DV$.

\begin{lemma} \label{crucial recursion}
Let $V$ be a finitely generated torsionless $\C$-module. If $DV$ is $\sharp$-filtered, so is $V$.
\end{lemma}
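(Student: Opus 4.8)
The plan is to argue by induction on the generating degree $n = \gd(V)$. When $n = 0$ or $n = -\infty$ the module $V$ is torsionless and generated in degree $0$ (or is zero), so it is $\sharp$-filtered by Lemma \ref{modules with generating degree 0}; note the hypothesis is vacuous here since $DV = 0$. So suppose $n \geqslant 1$ and that the statement holds for every torsionless $\C$-module whose generating degree is $< n$.

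The first step is to control $H_1(V)$: by Lemma \ref{technical lemma} we have $\hd_1(V) \leqslant \gd(V) = n$, and since $H_1(V)$ is a torsion module this says precisely that $H_1(V)_i = 0$ for every $i > n$. Next, let $V'$ be the submodule of $V$ generated by $\bigoplus_{i \leqslant n-1} V_i$, so that there is a short exact sequence $0 \to V' \to V \to V'' \to 0$ with $\gd(V') \leqslant n-1$ and $V''$ generated in degree $n$ (and $\gd(V'') = n$). Feeding this into the long exact sequence of homology, $H_1(V'')$ sits in a short exact sequence whose left term is a quotient of $H_1(V)$ and whose right term is a submodule of $H_0(V')$; as the former is concentrated in degrees $\leqslant n$ and the latter in degrees $\leqslant n-1$, we get $\hd_1(V'') \leqslant n = \gd(V'')$. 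Now Lemma \ref{modules generated in one degree} applies and shows that $V''$ is $\sharp$-filtered; in particular $V''$ is torsionless by Proposition \ref{sharp-filtered modules are torsionless}, and $V'$ is torsionless as a submodule of the torsionless module $V$.

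Since $V'$, $V$ and $V''$ are all torsionless, Corollary \ref{torsionless modules induce diagram} yields a commutative $3\times 3$ diagram with exact rows and columns whose rightmost column is a short exact sequence $0 \to DV' \to DV \to DV'' \to 0$. In this sequence $DV$ is $\sharp$-filtered by hypothesis, and $DV''$ is $\sharp$-filtered because $V''$ is and $D$ preserves $\sharp$-filtered modules (Proposition \ref{sharp-filtered modules are torsionless}); therefore $DV'$ is $\sharp$-filtered by Corollary \ref{kernels of sharp-filtered modules are sharp-filtered}(1). Since $V'$ is torsionless with $\gd(V') \leqslant n-1$, the induction hypothesis applies to $V'$, so $V'$ is $\sharp$-filtered. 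Finally, in $0 \to V' \to V \to V'' \to 0$ both ends are $\sharp$-filtered, hence $V$ is $\sharp$-filtered by Corollary \ref{kernels of sharp-filtered modules are sharp-filtered}(2), completing the induction.

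The step I expect to be the crux — and the only place where the hypothesis on $DV$ is really used — is the verification that $V''$ is torsionless: a quotient of a torsionless module need not be torsionless, and it is exactly the degreewise vanishing $H_1(V)_i = 0$ for $i > n$ provided by Lemma \ref{technical lemma} that pushes $\hd_1(V'')$ down to $\gd(V'')$, so that Lemma \ref{modules generated in one degree} can be invoked. The remaining ingredients are routine manipulations of long exact sequences together with the closure of $\sharp$-filtered modules under kernels and extensions.
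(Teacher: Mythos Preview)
Your proof is correct and follows the paper's argument step for step: induction on $\gd(V)$, Lemma~\ref{technical lemma} plus Lemma~\ref{modules generated in one degree} to make the top quotient $V''$ $\sharp$-filtered (hence torsionless), then the $3\times 3$ diagram from Corollary~\ref{torsionless modules induce diagram} together with Corollary~\ref{kernels of sharp-filtered modules are sharp-filtered} to run induction on $V'$. One small correction to your closing commentary: the hypothesis that $DV$ is $\sharp$-filtered is invoked \emph{twice}, not once --- besides feeding into Lemma~\ref{technical lemma}, it is used directly in the column $0 \to DV' \to DV \to DV'' \to 0$ when you apply Corollary~\ref{kernels of sharp-filtered modules are sharp-filtered}(1) to conclude that $DV'$ is $\sharp$-filtered.
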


\begin{proof}
We use induction on $\gd(V)$. The conclusion for $\gd(V) = 0$ has been established in Lemma \ref{modules with generating degree 0}. Suppose that it holds for all finitely generated $\C$-modules with generating degree at most $n$, and let $V$ be a finitely generated $\C$-module with $\gd(V) = n+1$. As before, consider the exact sequence $0 \to V' \to V \to V'' \to 0$ where $V'$ is the submodule generated by $\bigoplus _{i \leqslant n} V_i$.

By considering the long exact sequence
\begin{equation*}
\ldots \to H_1(V) \to H_1(V'') \to H_0(V') \to H_0(V) \to H_0(V'') \to 0
\end{equation*}
and using the fact $\gd(V') \leqslant n$ and $n+1= \gd(V) \geqslant \hd_1(V)$ which is proved in Lemma \ref{technical lemma}, one deduces that $\hd_1(V'') \leqslant n+1 = \gd(V'')$. Consequently, $V''$ is a $\sharp$-filtered module by Lemma \ref{modules generated in one degree}, and hence is torsionless by Proposition \ref{sharp-filtered modules are torsionless}. Moreover, $DV''$ is $\sharp$-filtered as well by Proposition \ref{sharp-filtered modules are torsionless}.

Since $V''$ is torsionless, the above exact sequence gives rise to a commutative diagram
\begin{equation*}
\xymatrix{
0 \ar[r] & V' \ar[r] \ar[d] & \Sigma V' \ar[r] \ar[d] & DV' \ar[r] \ar[d] & 0\\
0 \ar[r] & V \ar[r] \ar[d] & \Sigma V \ar[r] \ar[d] & DV \ar[r] \ar[d] & 0\\
0 \ar[r] & V'' \ar[r] & \Sigma V'' \ar[r] & DV'' \ar[r] & 0.
}
\end{equation*}
Note that both $DV''$ and $DV$ are $\sharp$-filtered. By Corollary \ref{kernels of sharp-filtered modules are sharp-filtered}, $DV'$ is $\sharp$-filtered as well. But clearly $V'$ is torsionless since it is a submodule of the torsionless module $V$. By induction hypothesis, $V'$ is $\sharp$-filtered, so is $V$. The conclusion follows from induction.
\end{proof}

Now we are ready to show the following result.

\begin{theorem} \label{modules become sharp-filtered}
Let $\mk$ be a commutative Noetherian ring and let $V$ be a finitely generated $\C$-module. If $d \gg 0$, then $\Sigma_d V$ is $\sharp$-filtered.
\end{theorem}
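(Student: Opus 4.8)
The plan is to reduce the statement to the already-established Lemma~\ref{crucial recursion} by an induction on a suitable measure of complexity of $V$. The natural parameter to induct on is $\gd(V)$, since Proposition~\ref{properties of D}(3) tells us that $\gd(DV) = \gd(V) - 1$ whenever $\gd(V) \geqslant 1$, and $DV$ vanishes once $\gd(V) \leqslant 0$; moreover Lemma~\ref{D and Sigma commute} says $D$ and $\Sigma$ commute up to isomorphism, so shifting and applying $D$ can be interchanged freely. First I would dispose of the torsion part: by the remark preceding Corollary~\ref{torsionless modules induce diagram} (and the discussion in Remark~\ref{kernel of V to Sigma V}) there is a short exact sequence $0 \to V_T \to V \to V_F \to 0$ with $V_T$ torsion and $V_F$ torsionless, and since $V_T$ is torsion we have $\Sigma_d V_T = 0$ for $d \gg 0$; as $\Sigma$ is exact, $\Sigma_d V \cong \Sigma_d V_F$ for all large $d$. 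Hence it suffices to prove the theorem for torsionless $V$, and I will assume this from now on.

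Next, I would set up the induction on $n = \gd(V)$. The base case $n \leqslant 0$ is exactly Lemma~\ref{modules with generating degree 0}: a torsionless module generated in degree $0$ (or the zero module) is already $\sharp$-filtered, so no shift is needed. For the inductive step, suppose the result holds for all finitely generated torsionless $\C$-modules of generating degree at most $n$, and let $\gd(V) = n+1$. Since $V$ is torsionless, Proposition~\ref{properties of D}(2) gives the short exact sequence $0 \to V \to \Sigma V \to DV \to 0$; by Proposition~\ref{properties of D}(3), $\gd(DV) = n$, and $DV$ is again torsionless (it is a quotient of the torsionless module $\Sigma V$ by the submodule $V$ --- more carefully, one checks torsionlessness is inherited here, or invokes that $DV$ embeds in $\Sigma DV$ via the torsionless criterion once we know $\gd(DV)$ behaves well; alternatively $DV$ is torsionless because by Remark~\ref{kernel of V to Sigma V} the kernel $K$ of $V \to \Sigma V$ is torsion, hence zero here, and one can run the snake-lemma computation). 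By the induction hypothesis applied to $DV$, there is $d_0$ with $\Sigma_{d_0} DV$ being $\sharp$-filtered. Using Lemma~\ref{D and Sigma commute} repeatedly, $\Sigma_{d_0} DV \cong D(\Sigma_{d_0} V)$, so $D(\Sigma_{d_0} V)$ is $\sharp$-filtered. Now $\Sigma_{d_0} V$ is still torsionless by Proposition~2.1(4), so Lemma~\ref{crucial recursion} applies and yields that $\Sigma_{d_0} V$ itself is $\sharp$-filtered. Taking any $d \geqslant d_0$ and using Proposition~\ref{sharp-filtered modules are torsionless} (which says $\Sigma$ preserves $\sharp$-filteredness) finishes the inductive step.

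I expect the main obstacle to be the bookkeeping around torsionlessness of $DV$ and the interaction of $D$ with $\Sigma$ across iterated shifts: one must be slightly careful that after applying $D$ one stays in the torsionless world so that Lemma~\ref{crucial recursion} is applicable, and that $\Sigma_{d_0} DV \cong D \Sigma_{d_0} V$ really follows from a $d_0$-fold application of Lemma~\ref{D and Sigma commute}. These are not deep points, but they are where the argument could silently break. An alternative packaging of the same idea avoids iterating $D$: induct on $\gd(V)$ and, in the inductive step, first choose $d_1$ so that $\Sigma_{d_1}$ kills the torsion in every relevant submodule, then observe directly that $D(\Sigma_{d_1} V)$ has strictly smaller generating degree and is torsionless, apply the induction hypothesis to it to get a further shift making it $\sharp$-filtered, and conclude via Lemma~\ref{crucial recursion}. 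Either way, the conceptual engine is: $D$ strictly decreases $\gd$, $D$ commutes with $\Sigma$, $\Sigma$ eventually annihilates torsion, and ``$DV$ $\sharp$-filtered $\Rightarrow$ $V$ $\sharp$-filtered'' for torsionless $V$.
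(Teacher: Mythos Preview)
Your overall strategy---induction on $\gd(V)$, using that $D$ strictly lowers generating degree, that $D$ commutes with $\Sigma$ (Lemma~\ref{D and Sigma commute}), and that Lemma~\ref{crucial recursion} promotes ``$DV$ is $\sharp$-filtered'' to ``$V$ is $\sharp$-filtered'' for torsionless $V$---is exactly the paper's approach. The ingredients and the order in which they are used match.

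There is, however, a genuine gap in your bookkeeping. After reducing to torsionless $V$, you phrase the induction hypothesis as holding only for \emph{torsionless} modules of smaller generating degree, and then apply it to $DV$. This forces you to claim that $DV$ is torsionless whenever $V$ is, and none of the three arguments you sketch establishes that: quotients of torsionless modules need not be torsionless; the assertion that $DV \hookrightarrow \Sigma DV$ is precisely the torsionlessness of $DV$, so invoking it is circular; and the snake-lemma observation from Remark~\ref{kernel of V to Sigma V} only shows $V \hookrightarrow \Sigma V$, not anything about $DV$. In fact the implication ``$V$ torsionless $\Rightarrow$ $DV$ torsionless'' is false in general: if it held, then iterating $D$ until the generating degree drops to $0$, applying Lemma~\ref{modules with generating degree 0}, and climbing back via Lemma~\ref{crucial recursion} would show every torsionless module is already $\sharp$-filtered with no shift needed---contradicting the existence of torsionless modules with $H_1 \neq 0$.

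The fix is exactly what the paper does: do \emph{not} restrict the induction hypothesis to torsionless modules. Induct on $\gd(V)$ over all finitely generated $V$; in the inductive step choose $d$ large enough that $\Sigma_d V$ is torsionless (i.e.\ $d > \td(V)$), apply the induction hypothesis to $DV$ (no torsionlessness needed) to get $\Sigma_d DV \cong D\Sigma_d V$ $\sharp$-filtered, and then invoke Lemma~\ref{crucial recursion} on the torsionless module $\Sigma_d V$. Your reduction to torsionless $V$ at the outset is harmless but unnecessary, and it is what led you to weaken the induction hypothesis in a way that breaks the argument.
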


\begin{proof}
Again, assume that $V$ is nonzero. Since $d \gg 0$, one may suppose that $\td(V) < d$. Therefore, $\Sigma_d V$ is torsionless. We use induction on $\gd(V)$. If $\gd(V) = 0$, then $\Sigma_d V$ is generated in degree 0, so the conclusion holds by Lemma \ref{modules with generating degree 0}. Now suppose that the conclusion holds for all modules with generating degrees at most $n$. We then deal with $\gd(V) = n+1$.

Consider the exact sequence
\begin{equation*}
0 \to \Sigma_d V \to \Sigma_{d+1} V \to \Sigma_d DV \cong D\Sigma_d V \to 0.
\end{equation*}
If $\gd(\Sigma_d V) = 0$, nothing needs to show. So we suppose that $\gd (\Sigma_d V) \geqslant 1$. Note that $\gd(DV) = n$. Therefore, by induction hypothesis, $\Sigma_d DV$ is $\sharp$-filtered. That is, $D \Sigma_d V$ is $\sharp$-filtered. By Lemma \ref{crucial recursion}, $\Sigma_d V$ is $\sharp$-filtered as well.
\end{proof}

\begin{remark} \normalfont \label{polynomial growth}
This theorem gives another proof for the polynomial growth phenomenon observed in \cite{CEFN}. Since for a sufficiently large $N \in \Z$, $\Sigma_N V$ has a filtration each component of which is exactly of the form $M(W)$ as in \cite[Definition 2.2.2]{CEF}, and each $M(W)$ satisfies the polynomial growth property, so is $\Sigma_N V$. Consequently, $\tau_N V$ satisfies the polynomial growth property.
\end{remark}

Because $\sharp$-filtered modules coincide with projective modules when $\mk$ is a field of characteristic 0, one has:

\begin{corollary}
Let $\mk$ be a field of characteristic 0 and let $V$ be a finitely generated $\C$-module. Then for $d \gg 0$, $\Sigma_d V$ is projective.
\end{corollary}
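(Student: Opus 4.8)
The plan is to reduce the statement to Theorem~\ref{modules become sharp-filtered} together with the elementary fact that, when $\mk$ is a field of characteristic $0$, every finitely generated $\sharp$-filtered $\C$-module is projective. Granting this fact, Theorem~\ref{modules become sharp-filtered} gives that $\Sigma_d V$ is $\sharp$-filtered for $d \gg 0$, hence projective, which is exactly the assertion of the corollary. So the whole content is to prove that $\sharp$-filtered implies projective in characteristic $0$.

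First I would treat the basic $\sharp$-filtered modules. Such a module has the form $\tC \otimes_{\mk S_i} T_i$ for a finitely generated $\mk S_i$-module $T_i$. Since $\mk$ is a field of characteristic $0$, Maschke's theorem makes $\mk S_i$ semisimple, so $T_i$ is projective; a splitting of some surjection $(\mk S_i)^{\oplus k} \to T_i$ exhibits $T_i$ as a direct summand of $(\mk S_i)^{\oplus k}$. Applying the additive functor $\tC \otimes_{\mk S_i} -$ and using $\tC \otimes_{\mk S_i}(\mk S_i) \cong \tC 1_i = \tC(i,-)$ (the lemma at the start of Section~3 that $\tC 1_i$ is right free over $\mk S_i$), we see that $\tC \otimes_{\mk S_i} T_i$ is a direct summand of the projective module $\tC(i,-)^{\oplus k}$, hence projective.

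Next I would pass to a general $\sharp$-filtered module $W$ with a filtration $0 = W^{-1} \subseteq W^0 \subseteq \dots \subseteq W^n = W$ whose successive quotients are basic $\sharp$-filtered, arguing by induction on $n$. The case $n=0$ is the previous paragraph. For $n \geqslant 1$, the submodule $W^{n-1}$ inherits the filtration of length $n-1$ and is projective by induction, while the short exact sequence $0 \to W^{n-1} \to W \to W/W^{n-1} \to 0$ has projective (basic $\sharp$-filtered) cokernel, hence splits; therefore $W \cong W^{n-1} \oplus (W/W^{n-1})$ is projective. Applying this to $W = \Sigma_d V$ finishes the proof.

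I do not anticipate any real obstacle: the argument is a routine combination of Maschke's theorem, additivity of $\tC \otimes_{\mk S_i} -$, and the splitting of extensions with projective cokernel. The only point worth flagging is that characteristic $0$ enters exactly once, and essentially, through the semisimplicity of the group algebras $\mk S_i$, which is what promotes the building blocks $\tC \otimes_{\mk S_i} T_i$ from ``$\sharp$-filtered'' to ``projective.''
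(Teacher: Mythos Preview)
Your proposal is correct and matches the paper's approach exactly: the paper simply records before the corollary that $\sharp$-filtered modules coincide with projective modules in characteristic $0$ and then invokes Theorem~\ref{modules become sharp-filtered}, while you supply the (routine) details of the implication $\sharp$-filtered $\Rightarrow$ projective via Maschke and splitting. There is nothing to add.
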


\section{Proofs of main results}

In this section we prove several main results mentioned in Section 1.

\subsection{Proof of Theorem \ref{first main result}}

Theorem \ref{characterizations of sharp-filtered modules} has established the equivalence of the first three statements in Theorem \ref{first main result}. In this subsection we show the equivalence between the first statement and the last one.

\begin{lemma}
Let $V$ be a finitely generated $\C$-module. If $H_2(V) = 0$, then $V$ is torsionless.
\end{lemma}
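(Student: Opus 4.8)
The plan is to take a projective presentation of $V$, observe that the hypothesis $H_2(V)=0$ forces the first syzygy to be $\sharp$-filtered, and then exploit the fact that $\sharp$-filtered modules and projectives are torsionless — together with the functor $D$ — to cancel any possible torsion in $V$ itself.

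Concretely, I would first pick a short exact sequence $0 \to Y \to P \to V \to 0$ with $P$ a finitely generated projective $\C$-module; since $V \in \C\module$ and $\C$ is locally Noetherian, $Y$ is again finitely generated. As $H_1(P)=H_2(P)=0$, the long exact sequence in homology attached to this short exact sequence gives an isomorphism $H_1(Y)\cong H_2(V)$. By hypothesis $H_2(V)=0$, so $H_1(Y)=0$, and Theorem \ref{characterizations of sharp-filtered modules} shows that $Y$ is $\sharp$-filtered. Hence, by Proposition \ref{sharp-filtered modules are torsionless}, both $Y$ and $DY$ are torsionless; and $P$, being projective (indeed a direct sum of basic $\sharp$-filtered modules $\tC 1_i$), is torsionless as well.

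Next I would apply the natural transformation $\mathrm{id}_{\C\module}\to\Sigma$ to the inclusion $Y\hookrightarrow P$. Because $Y$ and $P$ are torsionless, the maps $Y\to\Sigma Y$ and $P\to\Sigma P$ are injective by Proposition \ref{properties of D}, while $\Sigma$ keeps the rows exact; the snake lemma then produces, exactly as in Remark \ref{kernel of V to Sigma V}, an exact sequence
\[
0 \to K \to DY \to DP \to DV \to 0, \qquad K = \ker(V \to \Sigma V).
\]
Now $K$ is a submodule of the torsionless module $DY$, hence torsionless, while by Remark \ref{kernel of V to Sigma V} it is torsion; a module that is simultaneously torsion and torsionless is zero, so $K=0$. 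Thus $V\to\Sigma V$ is injective, and since $DV$ is by definition its cokernel, $0\to V\to\Sigma V\to DV\to 0$ is exact. By Proposition \ref{properties of D} this means precisely that $V$ is torsionless, which completes the argument.

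The only step that requires an actual idea rather than bookkeeping is the passage $H_2(V)=0 \Rightarrow Y$ is $\sharp$-filtered: this is what unlocks Proposition \ref{sharp-filtered modules are torsionless} and thereby gives control of $DY$. Everything afterwards is a formal diagram chase plus the elementary fact that torsion $\cap$ torsionless $=0$. I do not expect to need Proposition \ref{hd of torsion modules} or the reduction of Corollary \ref{reduction} here; the proof rests only on Theorem \ref{characterizations of sharp-filtered modules}, Proposition \ref{sharp-filtered modules are torsionless}, Proposition \ref{properties of D}, and Remark \ref{kernel of V to Sigma V}.
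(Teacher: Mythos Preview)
Your argument is correct and follows essentially the same route as the paper: take a projective presentation $0\to Y\to P\to V\to 0$, use $H_2(V)=0$ to deduce $H_1(Y)=0$ so that $Y$ is $\sharp$-filtered, then apply the snake lemma to the diagram comparing $Y\hookrightarrow P$ with $\Sigma Y\hookrightarrow\Sigma P$ to identify $\ker(V\to\Sigma V)$ with $\ker(DY\to DP)\subseteq DY$, and conclude since this kernel is both torsion and torsionless. The only cosmetic difference is that you package the output of the snake lemma as the four-term sequence $0\to K\to DY\to DP\to DV\to 0$, whereas the paper writes $0\to\ker\alpha\to V\to\Sigma V\to DV\to 0$ and then notes $\ker\alpha\subseteq DY$; these are equivalent, though strictly speaking Remark~\ref{kernel of V to Sigma V} gives the latter form rather than your four-term sequence.
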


\begin{proof}
Consider a short exact sequence
\begin{equation*}
0 \to W \to P \to V \to 0.
\end{equation*}
Since $H_1(W) = H_2(V) = 0$, $W$ is $\sharp$-filtered. Applying $\Sigma$ to it one gets
\begin{equation*}
0 \to \Sigma W \to \Sigma P \to \Sigma V \to 0.
\end{equation*}
They induce the following commutative diagram:
\begin{equation*}
\xymatrix{
0 \ar[r] & W \ar[r] \ar[d] & \Sigma W \ar[r] \ar[d] & DW \ar[r] \ar[d]^{\alpha} & 0\\
0 \ar[r] & P \ar[r] & \Sigma P \ar[r] & DP \ar[r] & 0,
}
\end{equation*}
and hence an exact sequence
\begin{equation*}
0 \to \ker \alpha \to V \to \Sigma V \to DV \to 0.
\end{equation*}

Since $W$ is $\sharp$-filtered, $DW$ is $\sharp$-filtered as well, and is torsionless by Proposition \ref{sharp-filtered modules are torsionless}. Therefore, $\ker \alpha$ as a submodule of $DW$ is torsionless as well. However, as explained in Remark \ref{kernel of V to Sigma V}, the kernel of $V \to \Sigma V$ is torsion since it is a submodule of $V_T$, the torsion part of $V$. This happens if and only if the kernel is 0. That is, $V$ is torsionless.
\end{proof}

The conclusion of this lemma can be strengthened.

\begin{lemma} \label{second homology vanishes}
Let $V$ be a finitely generated $\C$-module. If $H_2(V) = 0$, then $V$ is $\sharp$-filtered.
\end{lemma}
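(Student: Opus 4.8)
The plan is to bootstrap from the previous lemma, which already guarantees that $V$ is torsionless once $H_2(V) = 0$, and then run a recursion on $DV$ using Lemma \ref{crucial recursion}. The key observation I would exploit is that the functor $D$ drops the second homology just as it drops the generating degree: since $V$ is torsionless, Proposition \ref{properties of D}(2) gives a short exact sequence $0 \to V \to \Sigma V \to DV \to 0$, whose long exact sequence in homology relates $H_s(DV)$ to $H_s(\Sigma V)$ and $H_{s-1}(V)$. Combining this with Lemma \ref{hd under shift} (which controls $H_s(\Sigma V)$ in terms of the lower homologies of $V$) and with the fact that all $H_s(V)$ are torsion, I expect to be able to conclude that $H_2(DV)$ is forced to vanish as well — at least after possibly first reducing to the situation where the relevant lower homologies do not interfere.

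First I would set up the induction on $\gd(V)$. The base case $\gd(V) = 0$ is immediate: a torsionless module generated in degree $0$ is $\sharp$-filtered by Lemma \ref{modules with generating degree 0}. For the inductive step, with $\gd(V) = n+1$, I would invoke the previous lemma to get that $V$ is torsionless, hence the short exact sequence $0 \to V \to \Sigma V \to DV \to 0$ is available. Then $\gd(DV) = n$ by Proposition \ref{properties of D}(3), so if I can show $H_2(DV) = 0$ the induction hypothesis gives that $DV$ is $\sharp$-filtered, and then Lemma \ref{crucial recursion} upgrades this to: $V$ is $\sharp$-filtered. So the whole argument reduces to the single claim: \emph{if $V$ is torsionless and $H_2(V) = 0$, then $H_2(DV) = 0$.}

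To prove that claim, I would chase the long exact sequence of $0 \to V \to \Sigma V \to DV \to 0$ in $\Tor^{\tC}(\tC_0, -)$:
\begin{equation*}
\ldots \to H_2(V) \to H_2(\Sigma V) \to H_2(DV) \to H_1(V) \to \ldots
\end{equation*}
Since $H_2(V) = 0$, I get an injection $H_2(\Sigma V) \hookrightarrow H_2(DV)$ and, more importantly, an exact piece $H_2(\Sigma V) \to H_2(DV) \to H_1(V)$. This is not yet zero, so I would first reduce via Corollary \ref{reduction} to an $\sharp$-filtered-free situation, or alternatively argue directly that the connecting-map image inside $H_1(V)$ is killed. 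An cleaner route: since $H_2(V) = 0$ and (by the previous lemma) $V$ is torsionless, one actually has $H_1(V)$ controlled — indeed I would run the preceding lemma's argument one step further, i.e. take $0 \to W \to P \to V \to 0$, note $W$ is $\sharp$-filtered (from $H_1(W) = H_2(V) = 0$ and Theorem \ref{characterizations of sharp-filtered modules}), hence $DW$ is $\sharp$-filtered by Proposition \ref{sharp-filtered modules are torsionless}, and then the induced exact sequence $0 \to V \to \Sigma V \to DV \to 0$ fits into the diagram with $0 \to W \to \Sigma W \to DW \to 0$ and $0 \to P \to \Sigma P \to DP \to 0$; a diagram chase then identifies $DV$ as the cokernel $DP/DW$ with $DW$ $\sharp$-filtered and $DP$ projective, which forces $H_1(DV) = H_2(V) = 0$ via the corresponding long exact sequence, whence $DV$ is $\sharp$-filtered directly by Theorem \ref{characterizations of sharp-filtered modules}(3), and Lemma \ref{crucial recursion} finishes.

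The main obstacle I anticipate is the bookkeeping in that last diagram: one needs the rows $0 \to W \to \Sigma W \to DW \to 0$, $0 \to P \to \Sigma P \to DP \to 0$, $0 \to V \to \Sigma V \to DV \to 0$ to be simultaneously exact and compatible, which requires $W$, $P$, $V$ all torsionless. Projectives are torsionless and $W$ is $\sharp$-filtered hence torsionless by Proposition \ref{sharp-filtered modules are torsionless}, and $V$ is torsionless by the previous lemma — so the hypotheses of Corollary \ref{torsionless modules induce diagram} are met and the $3 \times 3$ diagram with exact rows and columns exists. Then the third column's long exact homology sequence, using $H_1(DW) = -\infty$ (since $DW$ is $\sharp$-filtered) and $H_i(DP) = 0$ for $i \geqslant 1$ (since $DP$ is projective), pins down $H_1(DV)$ in terms of $H_2$ of the last column's modules, which traces back to $H_2(V) = 0$. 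I would expect the delicate point to be matching up the connecting homomorphisms correctly so that nothing extraneous survives; but structurally this is the same maneuver already used in the previous lemma and in Lemma \ref{crucial recursion}, so it should go through cleanly.
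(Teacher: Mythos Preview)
Your approach is essentially the paper's: induct on $\gd(V)$, use the previous lemma to make $V$ torsionless, take $0 \to W \to P \to V \to 0$ with $P$ projective so that $H_1(W)=H_2(V)=0$ makes $W$ $\sharp$-filtered, build the $3\times 3$ diagram via Corollary \ref{torsionless modules induce diagram}, and analyze the third column $0 \to DW \to DP \to DV \to 0$ with $DW$ $\sharp$-filtered and $DP$ projective.

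There is, however, a genuine slip in your final step. The long exact sequence of that third column does \emph{not} give $H_1(DV)=0$; it gives
\begin{equation*}
0 = H_2(DP) \to H_2(DV) \to H_1(DW) = 0,
\end{equation*}
so what falls out is $H_2(DV)=0$. There is no identification ``$H_1(DV)=H_2(V)$'': the same long exact sequence only yields $H_1(DV)\cong\ker\big(H_0(DW)\to H_0(DP)\big)$, and nothing you have assembled forces that kernel to vanish a priori. So you cannot conclude that $DV$ is $\sharp$-filtered ``directly by Theorem \ref{characterizations of sharp-filtered modules}(3)'' and bypass the induction. Instead, feed $H_2(DV)=0$ together with $\gd(DV)<\gd(V)$ into the induction hypothesis you already set up; this gives $DV$ $\sharp$-filtered, and then Lemma \ref{crucial recursion} finishes exactly as you say. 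With that one correction your argument is the paper's proof verbatim.
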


\begin{proof}
We already know that $V$ is torsionless from the previous lemma. Now we use induction on $\gd(V)$. If $\gd(V) = 0$, then the conclusion follows from Lemma \ref{modules with generating degree 0}. Otherwise, we have a commutative diagram
\begin{equation*}
\xymatrix{
0 \ar[r] & W \ar[r] \ar[d] & \Sigma W \ar[r] \ar[d] & DW \ar[r] \ar[d] & 0\\
0 \ar[r] & P \ar[r] \ar[d] & \Sigma P \ar[r] \ar[d] & DP \ar[r] \ar[d] & 0\\
0 \ar[r] & V \ar[r] & \Sigma V \ar[r] & DV \ar[r] & 0.
}
\end{equation*}
where $P$ is projective and $W$ is $\sharp$-filtered. By Proposition \ref{sharp-filtered modules are torsionless}, $DW$ is $\sharp$-filtered as well. Moreover, $\gd(DV) < \gd(V)$. Therefore, by induction hypothesis, $DV$ is $\sharp$-filtered. But by Lemma \ref{crucial recursion}, $V$ must be $\sharp$-filtered as well.
\end{proof}

The conclusion of Corollary \ref{kernels of sharp-filtered modules are sharp-filtered} can be strengthened as follows:

\begin{corollary} \label{SES of sharp-filtered modules}
Let $0 \to U \to V \to W \to 0$ be a short exact sequence of finitely generated $\C$-modules. If two terms are $\sharp$-filtered, so is the third one.
\end{corollary}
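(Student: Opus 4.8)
The plan is to reduce to the one case not already covered by Corollary \ref{kernels of sharp-filtered modules are sharp-filtered}. That corollary handles two of the three configurations: if $V$ and $W$ are $\sharp$-filtered then so is the kernel $U$, and if $U$ and $W$ are $\sharp$-filtered then so is the extension $V$. So the only thing left to prove is that if $U$ and $V$ are $\sharp$-filtered, then the quotient $W$ is $\sharp$-filtered.

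For this remaining case I would invoke the long exact sequence on homologies induced by $0 \to U \to V \to W \to 0$, namely
\begin{equation*}
\ldots \to H_2(U) \to H_2(V) \to H_2(W) \to H_1(U) \to H_1(V) \to \ldots.
\end{equation*}
Since $U$ and $V$ are $\sharp$-filtered, Theorem \ref{characterizations of sharp-filtered modules} (or directly Lemma \ref{higher homologies of sharp-filtered modules vanish}) gives $H_i(U) = H_i(V) = 0$ for all $i \geqslant 1$. In particular the segment $H_2(V) \to H_2(W) \to H_1(U)$ reads $0 \to H_2(W) \to 0$, which forces $H_2(W) = 0$.

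Now I would simply apply Lemma \ref{second homology vanishes}: the vanishing of $H_2(W)$ already implies that $W$ is $\sharp$-filtered. This completes the proof, since combined with the two cases from Corollary \ref{kernels of sharp-filtered modules are sharp-filtered} we have covered all three possibilities.

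There is no serious obstacle here; the only conceptual point is recognizing that it suffices to extract the vanishing of the \emph{single} homology group $H_2(W)$ from the long exact sequence, at which point the heavy lifting done in Lemma \ref{second homology vanishes} (which itself relied on Lemma \ref{crucial recursion} and the $D$-functor machinery) finishes the job immediately. One should just be a little careful that the relevant two entries flanking $H_2(W)$ in the long exact sequence are indeed $H_2(V)$ and $H_1(U)$, both of which are higher homologies of $\sharp$-filtered modules and hence zero.
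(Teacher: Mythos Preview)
Your proposal is correct and follows essentially the same route as the paper: reduce to the case where $U$ and $V$ are $\sharp$-filtered, read off $H_2(W)=0$ from the long exact sequence using $H_2(V)=H_1(U)=0$, and then invoke Lemma~\ref{second homology vanishes}. The paper's proof is identical in substance, just more tersely written.
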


\begin{proof}
It suffices to show that if $U$ and $V$ are $\sharp$-filtered, so is $W$. The long exact sequence
\begin{equation*}
\ldots \to H_2(U) \to H_2(V) \to H_2(W) \to H_1(U) \to \ldots
\end{equation*}
tells us $H_2(W) = 0$ since $H_1(U) = H_2(V) = 0$. The conclusion follows from Lemma \ref{second homology vanishes}.
\end{proof}

Now we are ready to prove the first main theorem mentioned in Section 1.

\begin{proposition}
Let $V$ be a finitely generated $\C$-module. Then $V$ is $\sharp$-filtered if and only if $H_s (V) = 0$ for some $s \geqslant 1$.
\end{proposition}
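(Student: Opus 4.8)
The plan is to treat the two implications separately. The forward implication is immediate: if $V$ is $\sharp$-filtered, then Theorem~\ref{characterizations of sharp-filtered modules} gives $H_i(V) = 0$ for all $i \geqslant 1$, so in particular $H_s(V) = 0$ for some $s \geqslant 1$.

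For the converse I would argue by induction on the index $s \geqslant 1$ for which $H_s(V) = 0$. The cases $s = 1$ and $s = 2$ are precisely Theorem~\ref{characterizations of sharp-filtered modules} and Lemma~\ref{second homology vanishes}, so these serve as the base of the induction. Assume then that $s \geqslant 3$ and that the statement holds for all smaller indices. Choose a short exact sequence $0 \to W \to P \to V \to 0$ with $P$ a finitely generated projective $\C$-module. Since $P$ is flat over $\tC$, for every $n \geqslant 2$ the long exact sequence of $\Tor^{\tC}_{\bullet}(\tC_0, -)$ has $\Tor^{\tC}_n(\tC_0, P) = \Tor^{\tC}_{n-1}(\tC_0, P) = 0$ and so yields the dimension-shifting isomorphism $H_n(V) \cong H_{n-1}(W)$; taking $n = s$ gives $H_{s-1}(W) \cong H_s(V) = 0$ with $s - 1 \geqslant 2 \geqslant 1$. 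By the inductive hypothesis $W$ is $\sharp$-filtered. Moreover $P$ is $\sharp$-filtered, since each representable projective $\tC(i,-) = \tC \otimes_{\mk S_i} \mk S_i$ is a basic $\sharp$-filtered module and a finite direct sum of such modules is $\sharp$-filtered. Applying Corollary~\ref{SES of sharp-filtered modules} to $0 \to W \to P \to V \to 0$, two of whose three terms ($W$ and $P$) are $\sharp$-filtered, we conclude that $V$ is $\sharp$-filtered, completing the induction.

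I do not expect a genuine obstacle here: all the substantive content has already been isolated in Lemma~\ref{second homology vanishes} and Corollary~\ref{SES of sharp-filtered modules}, and what remains is a routine dimension-shifting reduction to the case $s = 2$. The one point to keep straight is that the projective module $P$, not $V$, must occupy the flat slot of the short exact sequence, so that the isomorphism $H_{s-1}(W) \cong H_s(V)$ is legitimate; this is exactly why the induction is organized so as to peel off one projective cover at a time and reassemble via Corollary~\ref{SES of sharp-filtered modules}, rather than attempting to read the conclusion off a single (infinite) projective resolution.
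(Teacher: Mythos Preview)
Your proof is correct and follows essentially the same approach as the paper's: both arguments reduce to the $H_1$-criterion via dimension shifting along a projective resolution, and both invoke Corollary~\ref{SES of sharp-filtered modules} to climb back from a $\sharp$-filtered syzygy to $V$. The only cosmetic difference is that the paper takes the full adaptable projective resolution at once, uses $H_s(V) = H_1(Z^{s-1}) = 0$ to see that the $(s-1)$-st syzygy $Z^{s-1}$ is $\sharp$-filtered, and then recursively applies Corollary~\ref{SES of sharp-filtered modules} to the sequences $0 \to Z^{i} \to P^{i-1} \to Z^{i-1} \to 0$ down to $Z^{-1}=V$; you instead organize the same recursion as an induction on $s$, peeling off one projective cover per step. (Incidentally, your base case $s=2$ is redundant: once $s=1$ is in hand your inductive step already covers $s=2$, since Corollary~\ref{SES of sharp-filtered modules} is available.)
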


\begin{proof}
One direction is trivial. For the other direction, we can assume that $s \geqslant 2$. Take an adaptable projective resolution $P^{\bullet} \to 0$ and let $Z^i$ be the $i$-th cycle. Then we have
\begin{equation*}
0 = H_s(V) = H_1 (Z^{s-1}).
\end{equation*}
Consequently, $Z^{s-1}$ is a $\sharp$-filtered module. Applying the previous corollary to the short exact sequence
\begin{equation*}
0 \to Z^{s-1} \to P^{s-2} \to Z^{s-2} \to 0
\end{equation*}
one deduces that $Z^{s-2}$ is $\sharp$-filtered as well. The conclusion follows from recursion.
\end{proof}

\subsection{Upper Bounds for projective dimensions}

In this subsection we prove Theorem \ref{second main result}. We need a well known result on representation theory of finite groups.

\begin{lemma} \label{pd of group representations}
Let $G$ be a finite group and $\mk$ be a commutative Noetherian ring. Let $V$ be a finitely generated $\mk G$-module. If $\pd _{\mk G} (M)$ is finite, then $\pd _{\mk G} (M) = \pd_{\mk} (M)$.
\end{lemma}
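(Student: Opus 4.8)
The plan is to prove the equality $\pd_{\mk G}(M) = \pd_{\mk}(M)$ whenever the left-hand side is finite, using that $\mk G$ is free, hence projective, as a $\mk$-module, so that $\mk$-projective resolutions and $\mk G$-projective resolutions interact well via restriction and induction. The inequality $\pd_{\mk}(M) \leqslant \pd_{\mk G}(M)$ is the easy half: restricting a finite $\mk G$-projective resolution of $M$ to $\mk$ yields a resolution by $\mk G$-projectives, and these are projective (in fact free) over $\mk$ because $\mk G$ is $\mk$-free; hence $\pd_{\mk}(M)$ is at most the length of that resolution.

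For the reverse inequality I would argue by $\mathrm{Tor}$. Suppose $\pd_{\mk G}(M) = n < \infty$; I want $\pd_{\mk}(M) \geqslant n$, i.e. to exhibit a $\mk$-module $N$ with $\mathrm{Tor}_n^{\mk}(N, M) \neq 0$. The key is the standard change-of-rings fact that for a $\mk$-module $N$, $\mathrm{Tor}_i^{\mk}(N, \mathrm{Res}_{\mk}^{\mk G} M)$ can be computed by tensoring a $\mk G$-projective resolution $P^\bullet \to M$ with $\mk G \otimes_{\mk} N$ over $\mk G$ (using that $\mk G \otimes_\mk -$ sends $\mk$-projectives to $\mk G$-projectives and that restriction of $P^\bullet$ stays $\mk$-projective); concretely $\mathrm{Tor}_i^{\mk}(N,M) \cong \mathrm{Tor}_i^{\mk G}(\mk G \otimes_\mk N, M)$ as $\mk$-modules by a Shapiro-type / adjunction computation. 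Since $\pd_{\mk G}(M) = n$, there is a $\mk G$-module $L$ with $\mathrm{Tor}_n^{\mk G}(L, M) \neq 0$. It then suffices to find a $\mk$-module $N$ such that $\mk G \otimes_\mk N$ surjects onto, or otherwise detects, $L$ at the level of $\mathrm{Tor}_n$: taking $N = \mathrm{Res}_{\mk}^{\mk G} L$, the counit $\mk G \otimes_\mk L \to L$ is a split surjection of $\mk G$-modules (the splitting $\ell \mapsto \sum_{g} g \otimes g^{-1}\ell$ up to the usual normalization, or more simply because $L$ is a direct summand of $\mk G \otimes_\mk L$ via $g \otimes \ell \mapsto g\ell$ with section $\ell \mapsto 1 \otimes \ell$). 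Hence $\mathrm{Tor}_n^{\mk G}(L, M)$ is a direct summand of $\mathrm{Tor}_n^{\mk G}(\mk G \otimes_\mk L, M) \cong \mathrm{Tor}_n^{\mk}(L, M)$, so the latter is nonzero and $\pd_{\mk}(M) \geqslant n$.

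Combining the two inequalities gives $\pd_{\mk G}(M) = \pd_{\mk}(M)$. Alternatively, and perhaps more cleanly for the write-up, I would phrase the reverse inequality as: if $\pd_{\mk}(M) = m$, build a $\mk$-projective resolution of length $m$, apply $\mk G \otimes_\mk -$ to get a complex of $\mk G$-projectives, and show it resolves $\mk G \otimes_\mk M$; since $M$ is a $\mk G$-direct summand of $\mk G \otimes_\mk M$ (diagonal-type splitting as above), $\pd_{\mk G}(M) \leqslant \pd_{\mk G}(\mk G \otimes_\mk M) \leqslant m$. This avoids $\mathrm{Tor}$ entirely and uses only: $\mk G$ is $\mk$-free, induction preserves projectives, and the splitting of $M$ off $\mk G \otimes_\mk M$.

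\textbf{Main obstacle.} The only subtle point is the splitting of $M$ as a $\mk G$-module direct summand of $\mathrm{Ind}_{\mk}^{\mk G}\mathrm{Res}_{\mk}^{\mk G} M = \mk G \otimes_\mk M$; this does \emph{not} require $|G|$ invertible (unlike Maschke-type arguments) because the relevant map $\mk G \otimes_\mk M \to M$, $g \otimes x \mapsto gx$, is $\mk G$-linear and admits the $\mk G$-linear section $x \mapsto \sum_{g \in G} g \otimes g^{-1}x$ composed correctly — one must check carefully that a genuinely $\mk G$-linear section exists without dividing by $|G|$; the honest statement is that $x \mapsto 1 \otimes x$ is a $\mk$-linear (not $\mk G$-linear) section, while the $\mk G$-linear section uses the element $\sum_g g \otimes g^{-1}(-)$, and one verifies the composite is the identity on $M$. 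Getting this bookkeeping right — i.e. identifying which map is $\mk G$-linear in which direction — is where I expect to spend the most care; everything else is a routine change-of-rings argument over the $\mk$-free algebra $\mk G$.
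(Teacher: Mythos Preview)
Your easy inequality $\pd_\mk(M) \leqslant \pd_{\mk G}(M)$ is correct. The gap lies exactly where you flagged it: the $\mk G$-linear splitting of $L$ off $\mk G \otimes_\mk L$ does not exist in general. The map $\ell \mapsto 1 \otimes \ell$ is only $\mk$-linear, while the $\mk G$-linear map $\ell \mapsto \sum_{g\in G} g \otimes g^{-1}\ell$ composed with the action map $g\otimes\ell\mapsto g\ell$ returns $|G|\,\ell$, not $\ell$. Without $|G|$ invertible in $\mk$ there is simply no $\mk G$-linear section, and both versions of your direct-summand argument collapse. (Concretely: for $G=\mathbb{Z}/2$, $\mk=\mathbb{F}_2$, and $L=\mk$ the trivial module, $\mk G \otimes_\mk L \cong \mk G$ is indecomposable, so $L$ is not a summand.)

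The fix --- which is what the paper's one-line reference to Eckmann--Shapiro is pointing toward --- is to abandon the splitting and instead dimension-shift through the generally non-split short exact sequence. The $\mk G$-linear map $\ell \mapsto \sum_g g \otimes g^{-1}\ell$ is still \emph{injective}, yielding $0 \to L \to \mk G \otimes_\mk L \to C \to 0$. Your Shapiro isomorphism $\Tor_i^{\mk G}(\mk G \otimes_\mk L,\, M) \cong \Tor_i^{\mk}(L, M)$ is valid and vanishes for all $i > m := \pd_\mk(M)$, so the long exact sequence gives $\Tor_i^{\mk G}(L, M) \cong \Tor_{i+1}^{\mk G}(C, M)$ for every $i > m$. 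Now the finiteness hypothesis does the work: if $n := \pd_{\mk G}(M) > m$, choose $L$ with $\Tor_n^{\mk G}(L, M) \neq 0$; then $\Tor_{n+1}^{\mk G}(C, M) \neq 0$, contradicting $\pd_{\mk G}(M) = n$. Hence $n \leqslant m$, as required. This shift-and-cap argument is precisely where the assumption $\pd_{\mk G}(M) < \infty$ is used, and why the lemma cannot drop it.
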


\begin{proof}
The proof uses Eckmann-Shapiro Lemma. For details, see \cite[Theorem 4.3]{Li}.
\end{proof}

If $V$ is a basic $\sharp$-filtered module, then its projective dimension coincides with that of the finitely generated group representation inducing $V$. That is,

\begin{lemma} \label{compare pd}
Let $T$ be a finitely generated $\mk S_i$-module. Then
\begin{equation*}
\pd _{\mk S_i} (T) = \pd _{\tC} (\tC \otimes _{\mk S_i} T).
\end{equation*}
\end{lemma}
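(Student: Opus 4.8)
The plan is to prove the equality $\pd_{\mk S_i}(T) = \pd_{\tC}(\tC \otimes_{\mk S_i} T)$ by establishing two inequalities. The key structural fact is that $\tC 1_i = \tC(i,-)$ is a right free (hence right projective) $\mk S_i$-module, as noted in the lemma preceding Lemma \ref{higher homologies of sharp-filtered modules vanish}; more generally $\tC$ is right projective over $\mk S_i$. This exactness of $\tC \otimes_{\mk S_i} -$ is what makes the argument go through.

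First I would prove $\pd_{\tC}(\tC \otimes_{\mk S_i} T) \leqslant \pd_{\mk S_i}(T)$. Assuming the right-hand side is finite (otherwise there is nothing to prove), take a finite projective resolution $0 \to Q^m \to \ldots \to Q^0 \to T \to 0$ of $T$ over $\mk S_i$ of length $m = \pd_{\mk S_i}(T)$. Since $\tC$ is right projective over $\mk S_i$, applying the exact functor $\tC \otimes_{\mk S_i} -$ yields an exact sequence $0 \to \tC \otimes_{\mk S_i} Q^m \to \ldots \to \tC \otimes_{\mk S_i} Q^0 \to \tC \otimes_{\mk S_i} T \to 0$ in which each $\tC \otimes_{\mk S_i} Q^j$ is a projective $\tC$-module (a direct summand of a direct sum of copies of $\tC \otimes_{\mk S_i} \mk S_i \cong \tC(i,-)$, which is projective). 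Hence $\pd_{\tC}(\tC \otimes_{\mk S_i} T) \leqslant m$.

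For the reverse inequality $\pd_{\mk S_i}(T) \leqslant \pd_{\tC}(\tC \otimes_{\mk S_i} T)$, I would pass through homology and use the vanishing criteria already established. Writing $V = \tC \otimes_{\mk S_i} T$, the computation in the proof of Lemma \ref{higher homologies of sharp-filtered modules vanish} shows more precisely that $\Tor_s^{\tC}(\tC_0, V) \cong \Tor_s^{\mk S_i}(\mk 1_i, T)$ after evaluating at $i$, since $\tC_0 \otimes_{\tC} (\tC \otimes_{\mk S_i} -) \cong \mk 1_i \otimes_{\mk S_i} -$ and $\tC$ is right $\mk S_i$-projective; equivalently, a minimal-type projective resolution of $T$ over $\mk S_i$ induces one of $V$ over $\tC$ whose $s$-th term has a nonzero $i$-th component exactly when $\Tor_s^{\mk S_i}(\mk, T) \neq 0$. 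If $\pd_{\tC}(V) = n < \infty$, then applying the functor $D$ repeatedly — using $D(\tC \otimes_{\mk S_i} T) \cong (\tC \otimes_{\mk S_i} T)$ shifted appropriately, or more simply using that a length-$n$ projective resolution of $V$ over $\tC$ restricts in degree $i$ to a length-$n$ projective resolution of $T$ over $\mk S_i$ — forces $\pd_{\mk S_i}(T) \leqslant n$. Concretely, take a projective resolution $0 \to P^n \to \ldots \to P^0 \to V \to 0$ over $\tC$; evaluating at the object $i$ gives a complex of $\mk S_i = \tC(i,i)$-modules, and since each $P^j_i = (\tC(k,-)^{\oplus a})_i$ is a projective (in fact free) $\mk S_i$-module and the evaluation functor at $i$ is exact, this is a projective resolution of $V_i = T$ of length $\leqslant n$.

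The main obstacle I anticipate is the second inequality: one must be careful that evaluation at the object $i$ sends projective $\tC$-modules to projective $\mk S_i$-modules and is exact on the given resolution. Exactness of evaluation at an object is automatic for functor categories, so the real content is checking that $\tC(k,-)_i = \underline{\C(k,i)}$ is a projective $\mk S_i$-module for every $k$; this follows from the free right $S_i$-action on $\C(k,i)$ when $i \geqslant k$ (and it is $0$ when $i < k$). Once this is in hand, both inequalities combine to give the desired equality, and Lemma \ref{pd of group representations} then identifies these common values with $\pd_{\mk}(T)$ when finite, feeding into the proof of Theorem \ref{second main result}.
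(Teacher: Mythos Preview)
Your first inequality is correct and matches the paper's argument. The second inequality, however, has a genuine error. You claim that $\tC(k,-)_i=\underline{\C(k,i)}$ is a projective (in fact free) left $\mk S_i$-module, justified by a ``free right $S_i$-action on $\C(k,i)$''. There is no such action: the right action on $\C(k,i)$ is by $S_k$, not $S_i$. The relevant action is the \emph{left} $S_i$-action by post-composition, and this is transitive with stabilizer isomorphic to $S_{i-k}$, so $\underline{\C(k,i)}\cong \mk[S_i/S_{i-k}]\cong \mk S_i\otimes_{\mk S_{i-k}}\mk$. Over a field of characteristic $p$ dividing $(i-k)!$ this permutation module is \emph{not} projective. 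Hence evaluating an arbitrary finite $\tC$-projective resolution at $i$ need not produce a $\mk S_i$-projective resolution, and your argument for $\pd_{\mk S_i}(T)\leqslant \pd_{\tC}(\tC\otimes_{\mk S_i}T)$ breaks down.

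The paper avoids this by insisting that each term of the finite projective resolution be \emph{generated in degree $i$}. Concretely, take a free $\mk S_i$-resolution $Q^\bullet\to T$, induce to obtain the $\tC$-projective resolution $\tC\otimes_{\mk S_i}Q^\bullet\to V$ (each term a sum of copies of $\tC(i,-)$), and, assuming $\pd_{\tC}(V)=n<\infty$, truncate at the $n$-th syzygy, which is projective and still generated in degree $i$. Any finitely generated projective $\tC$-module generated in degree $i$ is a direct summand of some $\tC(i,-)^{\oplus a}$ (a surjection from such a module splits), so its value at $i$ is a summand of $(\mk S_i)^{\oplus a}$, hence projective. Now restriction to the object $i$ yields a length-$n$ projective resolution of $T$ over $\mk S_i$, giving the desired inequality. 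Your earlier, vaguer suggestion via $\Tor$ or ``minimal-type'' resolutions could also be made to work, but the concrete evaluation argument you wrote down needs this degree-$i$ restriction to succeed.
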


\begin{proof}
We claim that
\begin{equation*}
\pd _{\mk S_i} (T) \geqslant \pd _{\tC} (\tC \otimes _{\mk S_i} T).
\end{equation*}
If $\pd _{\mk S_i} (T) = \infty$, the inequality holds. Otherwise, there is a projective resolution $Q^{\bullet} \to T \to 0$ of $\mk S_i$-modules such that $Z^s$ is projective for $s \geqslant \pd _{\mk S_i} (T)$, where $Z^s$ is the $s$-th cycle. Applying the exact functor $\tC \otimes _{\mk S_i} -$ one gets a projective resolution of $\C$-modules
\begin{equation*}
\tC \otimes _{\mk S_i} Q^{\bullet} \to \tC \otimes _{\mk S_i} T \to 0,
\end{equation*}
whose $s$-th cycle $\tC \otimes _{\mk S_i} Z^s$ is a projective $\C$-module for $s \geqslant \pd _{\mk S_i} (T)$. The claim is proved.

Now we show that
\begin{equation*}
\pd _{\mk S_i} (T) \leqslant \pd _{\tC} (\tC \otimes _{\mk S_i} T).
\end{equation*}
If $\pd _{\tC} (\tC \otimes _{\mk S_i} T) = \infty$, the inequality holds. Otherwise, there is a finite projective resolution of $\C$-modules
\begin{equation*}
P^{\bullet} \to \tC \otimes _{\mk S_i} T \to 0
\end{equation*}
such that each term of which is generated in degree $i$. Restricting this resolution to the object $i$ one gets a finite resolution of $\mk S_i$-modules
\begin{equation*}
1_i P^{\bullet} \to 1_i (\tC \otimes _{\mk S_i} T) = T \to 0.
\end{equation*}
The above inequality follows from this observation.
\end{proof}

The following lemma tells us that to consider the projective dimension of a $\sharp$-filtered module, it is enough to consider its basic filtration components.

\begin{lemma} \label{pd of basic components}
Let $V$ be a finitely generated $\sharp$-filtered $\C$-module and suppose that $\pd(V) < \infty$. Let
\begin{equation*}
0 = V^{-1} \subseteq V^0 \subseteq V^1 \subseteq \ldots \subseteq V^n = V
\end{equation*}
be the filtration given in Proposition \ref{structure of sharp-filtered modules}. Then for $0 \leqslant i \leqslant n$, one has
\begin{equation*}
\pd (V^i / V^{i-1}) \leqslant \pd(V).
\end{equation*}
\end{lemma}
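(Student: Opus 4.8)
The plan is to exploit two facts from the preceding material: that a $\sharp$-filtered module has vanishing higher homology (Lemma~\ref{higher homologies of sharp-filtered modules vanish}), and that by Proposition~\ref{structure of sharp-filtered modules} each $V^i/V^{i-1}$ is either $0$ or a \emph{basic} $\sharp$-filtered module. First I would carry out the bookkeeping needed to reduce the statement to a question about symmetric groups. Since $V^i$ is generated in degrees $\leqslant i$, one has $(V^i)_i = V_i$ and $(V^{i-1})_i = (JV)_i$, so $(V^i/V^{i-1})_i = V_i/(JV)_i = H_0(V)_i$. Write $T_i := H_0(V)_i$, a finitely generated $\mk S_i$-module. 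Then Proposition~\ref{structure of sharp-filtered modules} gives $V^i/V^{i-1} \cong \tC \otimes_{\mk S_i} T_i$ (or $0$), and by Lemma~\ref{compare pd} it suffices to prove
\begin{equation*}
\pd_{\mk S_i}(T_i) \leqslant \pd(V) \qquad \text{for every } i.
\end{equation*}

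Next I would choose a projective resolution $P^{\bullet} \to V \to 0$ of finite length $d = \pd(V)$, with $P^j = \bigoplus_i \tC(i,-)^{\oplus a_{ij}}$ and $\sum_i a_{ij} < \infty$, and apply the functor $H_0 = \tC_0 \otimes_{\tC} -$. The homology of the complex $H_0(P^{\bullet})$ computes $H_{\bullet}(V)$, which by Lemma~\ref{higher homologies of sharp-filtered modules vanish} is concentrated in homological degree $0$ and equal there to $H_0(V)$. A direct computation shows $H_0(\tC(i,-)) \cong \mk S_i$, placed in degree $i$ (the unique object at which $\tC(i,-)$ acquires a new generator), so $H_0(P^j) \cong \bigoplus_i \mk S_i^{\oplus a_{ij}}$ as a $\tC_0$-module.

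Now I would use that $\tC_0 = \bigoplus_i \mk S_i$, so a $\tC_0$-module is precisely a family of $\mk S_i$-modules and evaluation at a fixed degree $i$ is exact. Applying this evaluation to the (exact, for $d\geqslant 1$) complex $0 \to H_0(P^d) \to \cdots \to H_0(P^0) \to H_0(V) \to 0$ yields
\begin{equation*}
0 \to (H_0(P^d))_i \to \cdots \to (H_0(P^0))_i \to T_i \to 0,
\end{equation*}
a resolution of $T_i$ by free $\mk S_i$-modules of length $\leqslant d$; the case $d=0$ is immediate since then $V$ and all $V^i/V^{i-1}$ are projective. Hence $\pd_{\mk S_i}(T_i) \leqslant d = \pd(V)$, and invoking Lemma~\ref{compare pd} once more gives the claim.

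The computation is essentially formal once set up, so I expect no serious obstacle; the only delicate points are the two identifications used at the start — that $H_0$ of a representable is a single copy of $\mk S_i$ concentrated in degree $i$, and that $(V^i/V^{i-1})_i = H_0(V)_i$. (A slightly longer alternative would induct on $\pd(V)$, using that in a short exact sequence $0 \to K \to P \to V \to 0$ with $P$ projective the syzygy $K$ is again $\sharp$-filtered with $\pd(K) = \pd(V) - 1$ by Theorem~\ref{characterizations of sharp-filtered modules}; but then one must track how the canonical filtration of $K$ sits inside that of $V$, which is messier than the homological argument above.)
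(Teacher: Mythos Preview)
Your argument is correct and takes a genuinely different route from the paper's. The paper proceeds by induction on $\gd(V)$: for the top short exact sequence $0 \to U \to V \to W \to 0$ (with $U$ generated in degrees $\leqslant n-1$ and $W$ in degree $n$) it builds a \emph{horseshoe} resolution $P^{\bullet} \oplus Q^{\bullet}$ with $\gd(P^{\bullet}) \leqslant n-1$ and $Q^{\bullet}$ generated in degree $n$, and then observes that once the middle syzygy $V^{(i)}$ becomes projective, the degree separation forces $U^{(i)}$ and $W^{(i)}$ to be projective as well. Your approach instead leverages the homological characterization directly: apply $H_0$ to a finite projective resolution, use $H_s(V)=0$ to see the result is exact, and read off a finite projective $\mk S_i$-resolution of $T_i$ in each degree. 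This is shorter and more conceptual; the paper's argument is more hands-on but avoids invoking Lemma~\ref{compare pd} until after the bound is established.

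One small point to tighten: you assert the $P^j$ can be taken of the form $\bigoplus_i \tC(i,-)^{\oplus a_{ij}}$, i.e.\ free, of length exactly $d = \pd(V)$. In general the $d$-th syzygy of a free resolution is only projective, not free. This does not harm your argument: since $H_0 = \tC_0 \otimes_{\tC} -$ is left adjoint to restriction along $\tC \twoheadrightarrow \tC_0$, it sends projectives to projectives, so $H_0(P^j)$ is a projective $\tC_0$-module and its value at $i$ is a projective $\mk S_i$-module. The resulting degree-$i$ complex is then a projective (rather than free) resolution of $T_i$ of length $\leqslant d$, which is all you need.
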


\begin{proof}
One uses induction on the length $n$, which is precisely $\gd(V)$. If $n = 0$ or $-\infty$, nothing needs to show. For $n \geqslant 1$, one has a short exact sequence
\begin{equation*}
0 \to U \to V \to W \to 0
\end{equation*}
where $U$ is the submodule generated by $\bigoplus _{i \leqslant n-1} V_i$. It suffices to show that both $\pd (U) \leqslant \pd(V)$ and $\pd(W) \leqslant \pd(V)$ since in that case by induction hypothesis one knows that each basic filtration component of $U$ has projective dimension at most $\pd(U) \leqslant \pd(V)$.

Take two surjections $P^0 \to U$ and $Q^0 \to W$ such that $\gd(P^0) = \gd(U) \leqslant n-1$ and $Q^0$ is generated in degree $n$. We get a commutative diagram
\begin{equation*}
\xymatrix{
0 \ar[r] & U^{(1)} \ar[r] \ar[d] & V^{(1)} \ar[r] \ar[d] & W^{(1)} \ar[r] \ar[d] & 0\\
0 \ar[r] & P^0 \ar[r] \ar[d] & P^0 \oplus Q^0 \ar[r] \ar[d] & Q^0 \ar[r] \ar[d] & 0\\
0 \ar[r] & U \ar[r] & V \ar[r] & W \ar[r] & 0.
}
\end{equation*}
Clearly, $U^{(1)}$, $V^{(1)}$, and $W^{(1)}$ are all $\sharp$-filtered modules (might be 0). Moreover, one still has $\gd(U^{(1)}) \leqslant n-1$ and $\gd(W^{(1)}) = n$ if $W^{(1)}$ is not 0.

Continuing this process, one gets a projective resolution $P^{\bullet} \oplus Q^{\bullet} \to V \to 0$. Since $\pd(V) < \infty$, there exists some $i \in \Z$ such that $V^{(i)}$ is projective. But from the short exact sequence
\begin{equation*}
0 \to U^{(i)} \to V^{(i)} \to W^{(i)} \to 0
\end{equation*}
one sees that both $U^{(i)}$ and $W^{(i)}$ must be projective since $\gd(U^{(i)}) \leqslant n-1$ and $W^{(i)}$ is 0 or generated in degree $n$. This finishes the proof.
\end{proof}

\begin{definition}
The finitistic dimension of $\mk$, denoted by $\fdim \mk$, is defined to be
\begin{equation*}
\sup \{ \pd _{\mk} (T) \mid T \text{ is a finitely generated $\mk$-module and } \pd_{\mk} (T) < \infty \}.
\end{equation*}
\end{definition}

Now we restate and prove Theorem \ref{second main result}.

\begin{theorem}
Let $\mk$ be a commutative Noetherian ring whose finitistic dimension is finite, and let $V$ be a finitely generated $\C$-module with $\gd(V) = n$. Then the projective dimension $\pd (V)$ is finite if and only if for $0 \leqslant i \leqslant n$, one has
\begin{equation*}
V^i / V^{i-1} \cong \tC \otimes _{\mk S_i} (V^i / V^{i-1})_i
\end{equation*}
and
\begin{equation*}
\pd _{\mk S_i} ((V^i / V^{i-1})_i) < \infty,
\end{equation*}
where $V^i$ is the submodule of $V$ generated by $\bigoplus _{j \leqslant i} V_j$, Moreover, in that case
\begin{equation*}
\pd(V) = \max \{ \pd _{\mk S_i} ((V^i / V^{i-1})_i) \} _{i=0}^n = \max \{ \pd _{\mk} ((V^i / V^{i-1})_i) \} _{i=0}^n \leqslant \fdim \mk.
\end{equation*}
\end{theorem}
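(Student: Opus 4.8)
The strategy is to use the homological characterization from Theorem \ref{characterizations of sharp-filtered modules} to handle the ``only if'' direction first, then invoke the structural results on $\sharp$-filtered modules (Proposition \ref{structure of sharp-filtered modules} and Lemma \ref{pd of basic components}) together with the group-representation facts (Lemmas \ref{pd of group representations} and \ref{compare pd}) to pin down the precise value of $\pd(V)$.

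\emph{The ``only if'' direction.} Suppose $\pd(V) < \infty$. Since $V$ has a finite projective resolution, all higher homologies $H_s(V) = \Tor_s^{\tC}(\tC_0, V)$ vanish for $s$ large; in particular $H_s(V) = 0$ for some $s \geqslant 1$, so $V$ is $\sharp$-filtered by Theorem \ref{characterizations of sharp-filtered modules} (or the proposition just before this theorem). Then Proposition \ref{structure of sharp-filtered modules} provides the canonical filtration $0 = V^{-1} \subseteq V^0 \subseteq \ldots \subseteq V^n = V$ with $V^i$ generated by $\bigoplus_{j \leqslant i} V_j$ and $V^i/V^{i-1}$ either zero or a basic $\sharp$-filtered module, i.e.\ of the form $\tC \otimes_{\mk S_i} T_i$ for some finitely generated $\mk S_i$-module $T_i$; restricting to object $i$ shows $T_i \cong (V^i/V^{i-1})_i$, giving the claimed isomorphism. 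By Lemma \ref{pd of basic components}, $\pd(V^i/V^{i-1}) \leqslant \pd(V) < \infty$, and by Lemma \ref{compare pd} this equals $\pd_{\mk S_i}((V^i/V^{i-1})_i)$, which is therefore finite as required. Finally Lemma \ref{pd of group representations} converts $\pd_{\mk S_i}$ into $\pd_{\mk}$.

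\emph{The ``if'' direction and the formula.} Conversely, assume each $V^i/V^{i-1} \cong \tC \otimes_{\mk S_i} (V^i/V^{i-1})_i$ with $\pd_{\mk S_i}((V^i/V^{i-1})_i) < \infty$. By Lemma \ref{compare pd}, $\pd(V^i/V^{i-1}) = \pd_{\mk S_i}((V^i/V^{i-1})_i) < \infty$ for each $i$, and since projective dimension is subadditive along short exact sequences, climbing the filtration gives $\pd(V) \leqslant \max_i \pd(V^i/V^{i-1}) < \infty$. For the reverse inequality, at least one basic component $V^i/V^{i-1}$ realizes the maximum, and it is a subquotient of $V$ of a controlled type; combining Lemma \ref{pd of basic components} (which gives $\pd(V^i/V^{i-1}) \leqslant \pd(V)$ once we know $\pd(V) < \infty$) with the subadditivity bound yields equality $\pd(V) = \max_{i=0}^n \pd(V^i/V^{i-1}) = \max_{i=0}^n \pd_{\mk S_i}((V^i/V^{i-1})_i)$. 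Lemma \ref{pd of group representations} replaces $\pd_{\mk S_i}$ by $\pd_{\mk}$ (each is finite), and the bound $\pd_{\mk}((V^i/V^{i-1})_i) \leqslant \fdim \mk$ is then immediate from the definition of the finitistic dimension, since each $(V^i/V^{i-1})_i$ is a finitely generated $\mk$-module of finite projective dimension.

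\emph{Main obstacle.} The delicate point is Lemma \ref{pd of basic components}: one must produce, from a finite projective resolution of $V$, compatible finite resolutions of the filtration pieces, and in particular argue that once some syzygy $V^{(i)}$ of $V$ is projective, the corresponding syzygies $U^{(i)}$ and $W^{(i)}$ of the sub- and quotient-modules are \emph{both} projective. This uses crucially that $\gd(U^{(i)}) \leqslant n-1$ while $W^{(i)}$ is zero or generated purely in degree $n$, so the splitting $0 \to U^{(i)} \to V^{(i)} \to W^{(i)} \to 0$ forces projectivity of the outer terms by a degree-separation argument. Once this lemma is in hand, everything else is routine bookkeeping with long exact sequences and the two group-theoretic lemmas.
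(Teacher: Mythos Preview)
Your proposal is correct and follows essentially the same approach as the paper: both directions use Theorem~\ref{characterizations of sharp-filtered modules} to reduce to the $\sharp$-filtered case, Proposition~\ref{structure of sharp-filtered modules} for the canonical filtration, Lemma~\ref{compare pd} to pass between $\pd_{\tC}$ and $\pd_{\mk S_i}$, Lemma~\ref{pd of basic components} for the lower bound on $\pd(V)$, and Lemma~\ref{pd of group representations} to convert $\pd_{\mk S_i}$ to $\pd_{\mk}$. Your discussion of the ``main obstacle'' correctly identifies the degree-separation argument inside Lemma~\ref{pd of basic components}, which the paper has already isolated as a separate lemma.
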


\begin{proof}
If $\pd(V) < \infty$, then $H_s(V) = 0$ for $s \gg 0$. By Theorem \ref{characterizations of sharp-filtered modules}, $V$ must be a $\sharp$-filtered module. By Proposition \ref{structure of sharp-filtered modules},
\begin{equation*}
V^i / V^{i-1} \cong \tC \otimes _{\mk S_i} (V^i / V^{i-1})_i.
\end{equation*}
By Lemmas \ref{compare pd} and \ref{pd of basic components},
\begin{equation*}
\pd _{\mk S_i} ((V^i / V^{i-1})_i) = \pd _{\tC} (V^i / V^{i-1}) \leqslant \pd _{\tC} (V) < \infty.
\end{equation*}
Conversely, if the structure of $V$ has the given description, then the filtration and Lemma \ref{compare pd} tell us that
\begin{equation*}
\pd _{\tC} (V) \leqslant \max \{ \pd _{\tC} (\tC \otimes _{\mk S_i} (V^i / V^{i-1})_i) \} _{i=0}^n = \max \{ \pd _{\mk S_i} ( (V^i / V^{i-1})_i) \} _{i=0}^n,
\end{equation*}
which is finite.

Now suppose that $\pd _{\tC} (V)$ is finite. Then Lemma \ref{pd of basic components} tells us
\begin{equation*}
\pd _{\tC} (V) \geqslant \max \{ \pd _{\tC} (V^i / V^{i-1}) \} _{i=0}^n = \max \{ \pd _{\mk S_i} ((V^i / V^{i-1})_i) \} _{i=0}^n.
\end{equation*}
Therefore,
\begin{equation*}
\pd _{\tC} (V) = \max \{ \pd _{\mk S_i} ((V^i / V^{i-1})_i) \} _{i=0}^n = \max \{ \pd _{\mk} ( (V^i / V^{i-1})_i) \} _{i=0}^n
\end{equation*}
by Lemma \ref{pd of group representations}. Since all numbers in the last set must be finite, clearly $\pd _{\tC} (V) \leqslant \fdim \mk$ by the definition of finitistic dimensions.
\end{proof}

\begin{remark}
There does exist a finitely generated $\C$-module $V$ whose projective dimension is exactly $\fdim \mk$. Indeed, let $T$ be a $\mk$-module with $\pd_{\mk} (T) = \fdim \mk$, then
\begin{equation*}
\pd _{\mk S_i} (\mk S_i \otimes _{\mk} T) = \pd_{\mk} (T) = \fdim \mk
\end{equation*}
and
\begin{equation*}
\pd _{\tC} (\tC \otimes _{\mk S_i} (\mk S_i \otimes _{\mk} T)) = \pd _{\mk S_i} (\mk S_i \otimes _{\mk} T) = \fdim \mk.
\end{equation*}
\end{remark}

The following corollaries are immediate.

\begin{corollary}
If $\gldim \mk < \infty$, then the projective dimension of a finitely generated $\C$-module $V$ is either $\infty$ or at most $\gldim k$.
\end{corollary}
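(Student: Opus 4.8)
The plan is to deduce this directly from the restated Theorem~\ref{second main result}, whose hypothesis requires only that $\fdim \mk < \infty$. So the first step is to observe that finite global dimension forces finite finitistic dimension: if $\gldim \mk = d < \infty$, then every finitely generated $\mk$-module has projective dimension at most $d$, hence in particular every finitely generated $\mk$-module with finite projective dimension has projective dimension at most $d$. Therefore $\fdim \mk \leqslant \gldim \mk < \infty$, and the hypothesis of Theorem~\ref{second main result} is satisfied.

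Now let $V$ be a finitely generated $\C$-module. If $\pd(V) = \infty$ there is nothing to prove, so assume $\pd(V) < \infty$. Then Theorem~\ref{second main result} applies and gives
\begin{equation*}
\pd(V) = \max \{ \pd_{\mk} ((V^i/V^{i-1})_i) \}_{i=0}^{n} \leqslant \fdim \mk \leqslant \gldim \mk,
\end{equation*}
where $n = \gd(V)$ and $V^i$ is the submodule of $V$ generated by $\bigoplus_{j \leqslant i} V_j$. This establishes the dichotomy: $\pd(V)$ is either $\infty$ or bounded above by $\gldim \mk$.

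There is essentially no obstacle here; the only thing one needs to be a little careful about is that the statement of Theorem~\ref{second main result} is applied with its hypothesis $\fdim \mk < \infty$ verified, which is exactly the content of the first step. (One could also remark that when $\gldim \mk < \infty$ the bound is sharp, by the example following Theorem~\ref{second main result}: taking a $\mk$-module $T$ with $\pd_{\mk}(T) = \gldim \mk$ and forming $\tC \otimes_{\mk S_i} (\mk S_i \otimes_{\mk} T)$ produces a finitely generated $\C$-module of projective dimension exactly $\gldim \mk$.)
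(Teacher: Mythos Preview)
Your proof is correct and matches the paper's intent: the paper states this corollary as ``immediate'' from Theorem~\ref{second main result}, and your argument spells out exactly why, via the observation $\fdim \mk \leqslant \gldim \mk$.
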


For instance, if $\mk$ is $\mathbb{Z}$ or the polynomial ring of one variable over a field, then the projective dimension of a finitely generated $\C$-module $V$ can only be 0, 1 or $\infty$.

\begin{corollary}
If $\fdim \mk = 0$, then a finitely generated $\C$-module $V$ has finite projective dimension if and only if $V$ is projective.
\end{corollary}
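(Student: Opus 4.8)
The plan is to obtain this as an immediate specialization of Theorem \ref{second main result}. First I would dispose of the easy implication: if $V$ is projective then $\pd(V) = 0$ (or $-\infty$ when $V = 0$), which is finite, so nothing is needed in that direction.

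For the converse, I would suppose $V$ is a finitely generated $\C$-module with $\pd(V) < \infty$ and set $n = \gd(V)$. Since $\fdim \mk = 0$ is in particular finite, Theorem \ref{second main result} applies and gives
\[
\pd(V) = \max \{ \pd_{\mk}((V^i/V^{i-1})_i) \}_{i=0}^n \leqslant \fdim \mk = 0,
\]
so $\pd(V) \leqslant 0$. A finitely generated module of projective dimension at most $0$ is projective (the zero module being trivially projective), which is exactly the assertion.

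The hard part will be essentially nonexistent here: all the substantive work has already been carried out in Theorem \ref{second main result} and the lemmas feeding into it. If one wished to avoid invoking the full strength of that theorem, I would instead argue as follows: $\pd(V) < \infty$ forces $V$ to be $\sharp$-filtered by Theorem \ref{characterizations of sharp-filtered modules}; then, taking the canonical filtration of Proposition \ref{structure of sharp-filtered modules} and applying Lemmas \ref{pd of basic components}, \ref{compare pd} and \ref{pd of group representations}, each basic quotient $\tC \otimes_{\mk S_i} (V^i/V^{i-1})_i$ has $(V^i/V^{i-1})_i$ of finite and hence, since $\fdim \mk = 0$, zero projective dimension over $\mk S_i$, so each basic quotient is a projective $\C$-module; finally $V$, possessing a finite filtration with projective subquotients, is itself projective. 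Either route is routine, and I would present the first one for brevity.
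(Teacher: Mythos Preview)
Your proposal is correct and matches the paper's approach exactly: the paper states this corollary as an immediate consequence of Theorem \ref{second main result} without further proof, which is precisely your first route.
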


\begin{remark} \normalfont
Actually, many important classes of rings have finitistic dimension 0. Examples includes semisimple rings, self-injective algebras, finite dimensional local algebras, etc.
\end{remark}

\subsection{Complexes of $\sharp$-filtered modules}

In this subsection we construct a finite complex of $\sharp$-filtered modules for every finitely generated $\C$-module.

\begin{theorem} [\cite{N}, Theorem A] \label{complex of sharp-filtered modules}
Let $\mk$ be a commutative Noetherian ring and let $V$ be a finitely generated $\C$-module. Then there exists a complex
\begin{equation*}
F^{\bullet}: \quad 0 \to V \to F^0 \to F^1 \to \ldots \to F^n \to 0
\end{equation*}
satisfying the following conditions:
\begin{enumerate}
\item each $F^i$ is a $\sharp$-filtered module with $\gd(F^i) \leqslant \gd(V) - i$;
\item $n \leqslant \gd(V)$;
\item the homology in each degree of the complex is a torsion module, including the homology at $V$.
\end{enumerate}
\end{theorem}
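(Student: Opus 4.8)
The strategy is to build the complex $F^\bullet$ by induction on $\gd(V)$, using the shift functor $\Sigma$ together with the cokernel functor $D$. First I would pass to the torsionless situation: writing the canonical short exact sequence $0 \to V_T \to V \to V_F \to 0$ with $V_T$ torsion and $V_F$ torsionless, and noting $\gd(V_F) \leqslant \gd(V)$, it suffices to produce the desired complex for $V_F$, since a complex for $V_F$ will have torsion homology and the quotient map $V \to V_F$ has torsion kernel, so splicing in $V$ at the left merely introduces torsion homology at the $V$-slot. Thus assume $V$ is torsionless.

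For torsionless $V$, Proposition \ref{properties of D} gives the short exact sequence
\begin{equation*}
0 \to V \to \Sigma V \to DV \to 0,
\end{equation*}
with $\gd(\Sigma V) \leqslant \gd(V)$ and, when $\gd(V) \geqslant 1$, $\gd(DV) = \gd(V)-1$. The idea is to iterate: set $F^0$ to be a $\sharp$-filtered module receiving $V$, and reduce the construction of the tail of the complex to a module of strictly smaller generating degree. Concretely, by Theorem \ref{modules become sharp-filtered} there is some $d$ with $\Sigma_d V$ $\sharp$-filtered; but rather than jumping straight to $\Sigma_d$, I would use Corollary \ref{reduction}, which gives a short exact sequence $0 \to U \to V \to W \to 0$ with $W$ $\sharp$-filtered and $H_s(U) \cong H_s(V)$ for $s \geqslant 1$. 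The issue is that $U$ need not have smaller generating degree. So instead the cleaner route is: take $F^0 = \Sigma V$ (which is $\sharp$-filtered precisely when $V$ is, but in general is not), so one really wants $F^0$ $\sharp$-filtered outright. The honest approach is therefore to apply $\Sigma$ repeatedly and track the generating degree of the cokernel: since $\gd(D^{k}\Sigma_m V)$ drops by one with each application of $D$ (Proposition \ref{properties of D}(3)) and $D$ commutes with $\Sigma$ up to isomorphism (Lemma \ref{D and Sigma commute}), after at most $\gd(V)+1$ shifts the relevant module becomes $\sharp$-filtered, and unwinding the exact sequences $0 \to \Sigma_m V \to \Sigma_{m+1}V \to \Sigma_m DV \to 0$ assembles a complex of the stated length with the stated generating-degree bounds.

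The bookkeeping runs as follows. Set $n = \gd(V)$. Define $V^{(0)} = V$ and, inductively, $V^{(i+1)} = D V^{(i)}$ (after first making $V^{(i)}$ torsionless, which costs finitely many shifts that we absorb). Proposition \ref{properties of D}(3) gives $\gd(V^{(i)}) = n - i$ until it hits $-\infty$, so $V^{(n+1)} = 0$. Each torsionless stage gives $0 \to \Sigma V^{(i)} \to \Sigma_2 V^{(i)} \to \cdots$, and the key input is Theorem \ref{modules become sharp-filtered}: once we shift enough, $\Sigma_d V^{(i)}$ is $\sharp$-filtered. I would choose one large $d$ working simultaneously for all finitely many stages $i = 0, \dots, n$, set $F^i$ to be the $\sharp$-filtered module arising at stage $i$ (essentially $\Sigma_d V^{(i)}$ or a $\sharp$-filtered module closely related to it), and take the differential $F^i \to F^{i+1}$ to be the composite $\Sigma_d V^{(i)} \to \Sigma_d D V^{(i)} = \Sigma_d V^{(i+1)} = F^{i+1}$ coming from the natural surjection. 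Exactness of $0 \to V^{(i)} \to \Sigma V^{(i)} \to DV^{(i)} \to 0$ (torsionless case) forces the homology of $F^\bullet$ at $F^i$ to be controlled by the torsion discarded when passing from $V^{(i)}$ to its torsionless part and from $\Sigma$ to $\Sigma_d$; Remark \ref{kernel of V to Sigma V} guarantees all such discrepancies are torsion, giving condition (3). The generating-degree bound $\gd(F^i) \leqslant \gd(V) - i$ follows from $\gd(\Sigma_d V^{(i)}) = \gd(V^{(i)}) = n-i$ (Proposition \ref{properties of D}(3) plus the fact that $\Sigma$ does not raise generating degree), and $n \leqslant \gd(V)$ is automatic since $V^{(n+1)} = 0$.

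The main obstacle is verifying condition (3) — that every homology module, including the one at the $V$-slot, is torsion — with the correct accounting. The subtlety is that $V^{(i)}$ is not literally torsionless, so at each stage one works with $\Sigma_{d} V^{(i)}$ and must keep track of (i) the torsion kernel of $V^{(i)} \to \Sigma V^{(i)}$ from Remark \ref{kernel of V to Sigma V}, and (ii) the fact that the $F^i$ chosen at different stages must fit into genuine complexes, i.e. consecutive differentials compose to zero. The cleanest way to handle this is probably to first prove the torsionless case cleanly by a direct induction on $\gd(V)$ using $0 \to V \to \Sigma V \to DV \to 0$ and the inductive complex for $DV$ (which has generating degree $\gd(V)-1$), obtaining an exact complex with zero homology except possibly at $V$; then reduce the general case to the torsionless case via $0 \to V_T \to V \to V_F \to 0$ and Proposition \ref{hd of torsion modules} to see that the torsion contributions are harmless. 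I expect the torsionless induction itself to go through smoothly once one observes that applying $D$ (equivalently $\Sigma$, by Lemma \ref{D and Sigma commute}) to the inductive complex for $DV$ and prepending $0 \to V \to \Sigma V$ does the job.
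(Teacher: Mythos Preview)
Your reduction to the torsionless case and your identification of Theorem~\ref{modules become sharp-filtered} as the key input are both correct and match the paper. The gap is in the assembly of the complex: your proposed differentials do not form a complex, and your fallback does not produce $\sharp$-filtered terms.

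Concretely, with $V^{(i+1)} = DV^{(i)}$ and $F^i = \Sigma_d V^{(i)}$, the only ``natural surjection'' in sight is $\Sigma W \twoheadrightarrow DW$. Whichever way you interpret the map $F^i \to F^{i+1}$ you get into trouble: if you use the composite $\Sigma_d V^{(i)} \hookrightarrow \Sigma_{d+1} V^{(i)} \twoheadrightarrow \Sigma_d DV^{(i)}$, this is zero (it is $\Sigma_d$ applied to the exact sequence $V^{(i)} \to \Sigma V^{(i)} \to DV^{(i)}$); if instead you use $\Sigma_d V^{(i)} = \Sigma(\Sigma_{d-1}V^{(i)}) \twoheadrightarrow D(\Sigma_{d-1}V^{(i)}) \cong \Sigma_{d-1}V^{(i+1)}$, each differential is surjective, so consecutive ones cannot compose to zero. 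Your final ``cleanest'' route---prepend $0 \to V \to \Sigma V$ to the inductive complex for $DV$---fails for the reason you yourself noted: $\Sigma V$ is not $\sharp$-filtered in general, so it cannot serve as $F^0$.

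The paper's construction avoids this by \emph{not} iterating on $DV$. It sets $F^0 = \Sigma_{N_0} V_F$ for $N_0 \gg 0$ (which is $\sharp$-filtered by Theorem~\ref{modules become sharp-filtered}), and then iterates on the full cokernel $V^1 = \coker(V_F \hookrightarrow F^0)$ rather than on $DV$. The point you are missing is that $\gd(V^1) \leqslant \gd(V) - 1$: the cokernel of $V_F \hookrightarrow \Sigma_{N_0} V_F$ is filtered by the modules $D\Sigma_k V_F$, each of generating degree at most $\gd(V)-1$ by Proposition~\ref{properties of D}(3). The differential $\delta_i : F^i \to F^{i+1}$ is then the composite $F^i \twoheadrightarrow V^{i+1} \to V^{i+1}_F \hookrightarrow F^{i+1}$, which is neither surjective nor zero; consecutive differentials compose to zero because $\mathrm{im}(\delta_{i-1}) = V^i_F$ already dies in $V^{i+1} = \coker(V^i_F \to F^i)$. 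The homology at $F^i$ is then exactly $V^{i+1}_T$, visibly torsion.
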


\begin{proof}
One may assume that $V$ is nonzero. Denote $V$ by $V^0$ and let $N_0$ be a sufficiently large integer. There is a short exact sequence
\begin{equation*}
0 \to V^0_T \to V^0 \to V^0_F \to 0
\end{equation*}
such that $V^0_T$ is torsion and $V^0_F$ is torsionless, which gives a short exact sequence
\begin{equation*}
0 \to \Sigma_{N_0} V^0_T \to \Sigma_{N_0} V^0 \to \Sigma_{N_0} V^0_F \to 0.
\end{equation*}
Since $N_0$ is sufficiently large, we conclude that the first term in the above sequence is 0. Moreover, $\Sigma_{N_0} V^0_F \cong \Sigma_{N_0} V$ is $\sharp$-filtered by Theorem \ref{modules become sharp-filtered}. Let $F^0 = \Sigma_{N_0} V^0_F$. The natural embeddings
\begin{equation*}
V^0_F \to \Sigma V^0_F \to \Sigma_2 V^0_F \to \ldots \to \Sigma_{N_0} V^0_F = F^0
\end{equation*}
induces a map $\delta_{-1}: V^0 \to F^0$ which is the composite $V^0 \to V^0_F \to F^0$. Let $V^1 = \coker \delta_{-1}$. Repeating the above construction we get a map $V^1 \to F^1$, which induces the map $\delta_0: F^0 \to F^1$. In this way we construct a complex
\begin{equation*}
F^{\bullet}: \quad V \to F^0 \to F^1 \to \ldots \to F^n \to \ldots
\end{equation*}

Note that for $i \geqslant 1$, we have
\begin{equation}
\gd(F^i) \leqslant \gd (V_F^i) \leqslant \gd(V^i) \leqslant \gd(F^{i-1}) - 1
\end{equation}
where the first inequality holds because $F^i = \Sigma _{N_i} V^i_F$, and the last inequality follows from the exact sequence
\begin{equation*}
0 \to V^{i-1}_F \to F^{i-1} = \Sigma_{N_i} V^{i-1}_F \to V^i \to 0
\end{equation*}
and an argument similar to the proof of (3) in Proposition \ref{properties of D}. Therefore, using induction one concludes that
\begin{equation*}
\gd(F^i) \leqslant \gd(F^0) - i \leqslant \gd(V) - i
\end{equation*}
since $\gd(F^0) \leqslant \gd(V)$. This proves (1), which immediately implies (2). Moreover, one has
\begin{equation}
\gd(V^i) \leqslant \gd(V) - i
\end{equation}
for $i \geqslant 1$.

Now we prove (3). Note that the image of the map $\delta_i: F^i \to F^{i+1}$ is precisely $V^{i+1}_F$. The commutative diagram
\begin{equation*}
\xymatrix{
 & 0 \ar[r] & \im \delta_{i-1} = V^i_F \ar[r] \ar[d] & \ker \delta_i \ar[r] \ar[d] & V_T^{i+1} \ar[r] & 0\\
 & & F^i \ar@{=}[r] \ar[d] & F^i \ar[d]\\
0 \ar[r] & V^{i+1}_T \ar[r] & \coker \delta_{i-1} = V^{i+1} \ar[r] & \im \delta_i = V_F^{i+1} \ar[r] & 0
}
\end{equation*}
tells us that the homology at $F^i$ is isomorphic to $V_T^{i+1}$ for $i \geqslant 0$, a torsion module. A similar computation tells us that the homology at $V$ is $V_T$. This finishes the proof.
\end{proof}

\begin{remark} \normalfont
When $\mk$ is a field of characteristic 0, $\sharp$-filtered modules coincide with projective modules, which have been shown to be injective as well. Therefore, the above construction generalizes the construction of finite injective resolutions described in \cite{GL2}.
\end{remark}

\begin{remark} \normalfont
We used the conclusion of Theorem \ref{modules become sharp-filtered} to prove the above theorem. But they are actually equivalent. Indeed, choosing a sufficiently large $N$ and applying $\Sigma_N$ to the complex $F^{\bullet}$, we get a complex $\Sigma_N F^{\bullet}$. Since all homologies in $F^{\bullet}$ are torsion modules, after applying $\Sigma_N$, these torsion modules all vanish. Consequently, $\Sigma_N F^{\bullet}$ is a right resolution of $\Sigma_N V$. By Lemma \ref{sharp-filtered modules are torsionless}, each $\Sigma_N F^i$ is still $\sharp$-filtered. By Corollary \ref{kernels of sharp-filtered modules are sharp-filtered}, $\Sigma_N V$ is a $\sharp$-filtered module, as claimed by Theorem \ref{modules become sharp-filtered}.
\end{remark}

\subsection{Another bound for homological degrees}

In this subsection we use the complex of $\sharp$-filtered modules to obtain another bound for homological degrees. The proof of this result is almost the same as that of \cite[Theorem 5.18]{Li2} via replacing projective modules by $\sharp$-filtered modules. For the convenience of the reader, we still give enough details.

\begin{lemma} \label{hd of torsionless modules}
Let $V$ be a finitely generated torsionless $\C$-module. Then for $s \geqslant 1$,
\begin{equation*}
\hd_s (V) \leqslant 2\gd(V) + s - 1.
\end{equation*}
\end{lemma}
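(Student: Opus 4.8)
The plan is to argue by induction on $n:=\gd(V)$. If $n\leqslant 0$, then $V$ is torsionless and generated in degree $\leqslant 0$, hence $\sharp$-filtered by Lemma~\ref{modules with generating degree 0}, so $H_s(V)=0$ and $\hd_s(V)=-\infty$ for every $s\geqslant 1$ and there is nothing to prove. So assume $n\geqslant 1$ and that the inequality holds for all finitely generated torsionless $\C$-modules of generating degree $<n$. Since $V$ is torsionless with $\gd(V)=n\geqslant 1$, Corollary~\ref{compare the first two degrees} gives
\begin{equation*}
\max\{\hd_0(V),\ldots,\hd_s(V)\}=\max\{\hd_0(DV),\ldots,\hd_s(DV)\}+1,
\end{equation*}
so, since $\hd_s(V)$ is bounded above by the left-hand side, it is enough to show that $\max_{0\leqslant j\leqslant s}\hd_j(DV)\leqslant 2n+s-2$.

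Now $\gd(DV)=n-1$ by Proposition~\ref{properties of D}(3), but $DV$ need not be torsionless, so I would split off its torsion part: $0\to (DV)_T\to DV\to (DV)_F\to 0$ with $(DV)_F$ torsionless, $\gd((DV)_F)\leqslant n-1$, and $\td((DV)_T)=\td(DV)$. The long exact sequence in homology gives $\hd_j(DV)\leqslant\max\{\hd_j((DV)_T),\hd_j((DV)_F)\}$ for each $j$. By the inductive hypothesis applied to $(DV)_F$ (whose generating degree is $<n$), $\hd_j((DV)_F)\leqslant 2(n-1)+j-1\leqslant 2n+s-3$ for $1\leqslant j\leqslant s$, while $\hd_0((DV)_F)=\gd((DV)_F)\leqslant n-1$; and by Proposition~\ref{hd of torsion modules}, $\hd_j((DV)_T)\leqslant\td(DV)+j\leqslant\td(DV)+s$. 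Feeding these into the display above,
\begin{equation*}
\max\{\hd_0(V),\ldots,\hd_s(V)\}\leqslant\max\{\,2n+s-2,\ \td(DV)+s+1\,\}.
\end{equation*}
Thus the whole argument reduces to the single estimate $\td(DV)\leqslant 2n-2=2\gd(V)-2$ for $V$ torsionless, after which $\max_j\hd_j(V)\leqslant 2n+s-1$, hence $\hd_s(V)\leqslant 2n+s-1$, and the induction closes.

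I expect this last estimate to be the main obstacle: one has to show that torsionlessness of $V$ forces the torsion of $DV$ to sit in low degrees (such a bound cannot come from the generating degree alone, since in general the torsion degree of a torsion module is not controlled by its generating degree). Here is how I would attack it. Applying $\operatorname{Hom}_{\tC}(\tC(k,k),-)$ to the short exact sequence $0\to V\to\Sigma V\to DV\to 0$ of Proposition~\ref{properties of D}(2) and using that $V$ and $\Sigma V$ are torsionless yields an embedding $\operatorname{Hom}_{\tC}(\tC(k,k),DV)\hookrightarrow\operatorname{Ext}^1_{\tC}(\tC(k,k),V)$, so it suffices to bound the largest $k$ with $\operatorname{Ext}^1_{\tC}(\tC(k,k),V)\neq 0$; concretely, a nonzero torsion element of $DV$ in degree $k$ is represented by an element of $V_{k+1}$ not coming from $V_k$ but all of whose images in $V_{k+2}$ come from $V_{k+1}$, and when $k\geqslant 2\gd(V)-1$ one should be able to contradict this, using that $V$ is generated in degrees $\leqslant\gd(V)$ and that every structure map of a torsionless module is injective. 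Alternatively, and in the spirit of the proof of \cite[Theorem 5.18]{Li2} that this lemma imitates, one can run the same induction through the finite complex $0\to V\to F^0\to\cdots\to F^n\to 0$ of $\sharp$-filtered modules of Theorem~\ref{complex of sharp-filtered modules}: for torsionless $V$ the homology at $V$ vanishes, so $V\hookrightarrow F^0$ with $\gd(F^0)\leqslant n$ and $H_{\geqslant 1}(F^0)=0$, whence $\hd_s(V)\cong\hd_{s+1}(\coker(V\hookrightarrow F^0))$ for $s\geqslant 1$; since $\gd(\coker(V\hookrightarrow F^0))\leqslant n-1$, the same decomposition into torsionless and torsion parts applies, and the required bound becomes a statement on $\td$ of $\coker(V\hookrightarrow F^0)$.
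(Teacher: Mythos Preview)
Your inductive scheme via the functor $D$ is a reasonable idea, but it has a genuine gap at exactly the point you flag: the bound $\td(DV)\leqslant 2\gd(V)-2$ for torsionless $V$. You do not prove it, and neither of the two sketches you offer closes the gap. The first sketch (via the embedding $\Hom_{\tC}(\tC(k,k),DV)\hookrightarrow\operatorname{Ext}^1_{\tC}(\tC(k,k),V)$) is correct as far as it goes, but bounding the top degree in which this $\operatorname{Ext}^1$ is nonzero is precisely a Castelnuovo--Mumford regularity statement for $V$; it is not something you can get from ``generated in degrees $\leqslant n$'' plus ``structure maps are injective'' alone, and in fact it is essentially equivalent in difficulty to the lemma you are trying to prove. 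The second sketch reduces to the same kind of $\td$ bound on a cokernel and is likewise incomplete.

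The paper does \emph{not} attempt to control $\td(DV)$ or to induct on $\gd(V)$. Instead it uses your second setup once and then invokes \cite[Theorem~A]{CE} as a black box. Since $V$ is torsionless, the construction of Theorem~\ref{complex of sharp-filtered modules} gives a short exact sequence $0\to V\to F\to W\to 0$ with $F=\Sigma_N V$ $\sharp$-filtered and $\gd(W)\leqslant\gd(V)-1$. The long exact sequence in homology yields $\hd_s(V)=\hd_{s+1}(W)$ for $s\geqslant 1$ (since $H_{\geqslant 1}(F)=0$) and, from $0\to H_1(W)\to H_0(V)$, also $\hd_1(W)\leqslant\gd(V)$. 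Now \cite[Theorem~A]{CE} gives
\[
\hd_s(V)=\hd_{s+1}(W)\leqslant \gd(W)+\hd_1(W)+s\leqslant (\gd(V)-1)+\gd(V)+s=2\gd(V)+s-1.
\]
Note that the appeal to \cite{CE} is exactly what replaces the missing $\td$-bound in your argument: without it (or an equivalent regularity input) your induction does not close.
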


\begin{proof}
By the proof of Theorem \ref{complex of sharp-filtered modules}, there is a short exact sequence
\begin{equation*}
0 \to V \to F \to W \to 0
\end{equation*}
where $F = \Sigma_N V$ is a $\sharp$-filtered module and $\gd(W) \leqslant \gd(V) - 1$. Using the long exact sequence
\begin{equation*}
\ldots \to H_2(W) \to H_1(V) \to H_1(F) = 0 \to H_1(W) \to H_0(V) \to H_0(F) \to H_0(W) \to 0
\end{equation*}
one deduces that $\hd_s(V) = \hd_{s+1}(W)$ for $s \geqslant 1$ and $\hd_1(W) \leqslant \gd(V)$.

By \cite[Theorem A]{CE}, we have
\begin{equation*}
\hd_s(V) = \hd_{s+1} (W) \leqslant \gd(W) + \hd_1(W) + s.
\end{equation*}
for $s \geqslant 1$. Consequently,
\begin{equation*}
\hd_s(V) \leqslant \gd(V) - 1 + \gd(V) + s = 2\gd(V) + s - 1
\end{equation*}
as claimed.
\end{proof}

Now we can prove Theorem \ref{third main result}.

\begin{theorem}
Let $\mk$ be a commutative Noetherian ring and $V$ be a finitely generated $\C$-module. Then for $s \geqslant 1$, we have
\begin{equation*}
\hd_s (V) \leqslant \max \{ \td(V), \, 2\gd(V) - 1 \} + s.
\end{equation*}
\end{theorem}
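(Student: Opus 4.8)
The plan is to reduce to the two extreme cases that have already been treated --- torsion modules via Proposition \ref{hd of torsion modules} and torsionless modules via Lemma \ref{hd of torsionless modules} --- and then glue the resulting bounds together using the canonical torsion/torsionless short exact sequence.

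First I would invoke the short exact sequence
\begin{equation*}
0 \to V_T \to V \to V_F \to 0
\end{equation*}
from the Remark following Theorem \ref{third main result}, where $V_T$ is the torsion submodule of $V$ and $V_F = V/V_T$ is torsionless. Both are finitely generated. Two elementary observations will be used: $\td(V_T) = \td(V)$, since by the concrete description of torsion degree every element of $V$ killed by some (equivalently all) $\alpha \in \C(i, i+1)$ already lies in $V_T$; and $\gd(V_F) \leqslant \gd(V)$, since $V_F$ is a quotient of $V$. Note also the two degenerate cases: if $V$ is torsion then $V_F = 0$ and $\gd(V_F) = -\infty$, while if $V$ is torsionless then $V_T = 0$ and $\td(V) = -\infty$; in either case the asserted inequality collapses to one of the two input bounds, so we may assume both $V_T$ and $V_F$ are nonzero.

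Next I would apply $\Tor_{\ast}^{\tC}(\tC_0, -)$ to the short exact sequence to obtain the long exact sequence of homologies
\begin{equation*}
\ldots \to H_s(V_T) \to H_s(V) \to H_s(V_F) \to H_{s-1}(V_T) \to \ldots,
\end{equation*}
which exhibits $H_s(V)$ as an extension of a submodule of $H_s(V_F)$ by a quotient of $H_s(V_T)$. Since $\td$ is monotone along submodules and quotients --- again immediate from the elementwise description of $\td$ in the Remark --- this yields
\begin{equation*}
\hd_s(V) = \td(H_s(V)) \leqslant \max\{ \hd_s(V_T), \, \hd_s(V_F) \}.
\end{equation*}
Now Proposition \ref{hd of torsion modules} gives $\hd_s(V_T) \leqslant \td(V_T) + s = \td(V) + s$, and Lemma \ref{hd of torsionless modules} gives $\hd_s(V_F) \leqslant 2\gd(V_F) + s - 1 \leqslant 2\gd(V) + s - 1$ for $s \geqslant 1$. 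Combining,
\begin{equation*}
\hd_s(V) \leqslant \max\{ \td(V) + s, \, 2\gd(V) + s - 1 \} = \max\{ \td(V), \, 2\gd(V) - 1 \} + s,
\end{equation*}
as claimed.

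There is no serious obstacle here once Lemma \ref{hd of torsionless modules} and Proposition \ref{hd of torsion modules} are in hand; the argument is a one-step d\'evissage. The only points needing a moment's care are the monotonicity of $\td$ under subquotients (used when passing from the long exact sequence to the bound on $\hd_s(V)$) and the bookkeeping of the degenerate cases above, both of which are routine.
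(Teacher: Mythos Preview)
Your proof is correct and follows essentially the same approach as the paper: split $V$ via the torsion/torsionless short exact sequence, apply Proposition~\ref{hd of torsion modules} to $V_T$ and Lemma~\ref{hd of torsionless modules} to $V_F$, and combine via the long exact sequence. Your additional remarks on the degenerate cases and on the monotonicity of $\td$ under subquotients are sound but not present in the paper, which simply asserts the inequality $\hd_s(V) \leqslant \max\{\hd_s(V_T),\,\hd_s(V_F)\}$ directly.
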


\begin{proof}
The short exact sequence
\begin{equation*}
0 \to V_T \to V \to V_F \to 0
\end{equation*}
induces a long exact sequence
\begin{equation*}
\ldots \to H_s(V_T) \to H_s(V) \to H_s(V_F) \to \ldots.
\end{equation*}
We deduce that
\begin{equation*}
\hd_s(V) \leqslant \max \{ \hd_s(V_T), \, \hd_s(V_F) \}.
\end{equation*}
Note that
\begin{equation*}
\hd_s(V_T) \leqslant \td(V_T) + s = \td(V) + s,
\end{equation*}
by Corollary \ref{hd of torsion modules} and
\begin{equation*}
\hd_s(V_F) \leqslant 2\gd(V_F) + s - 1 \leqslant 2\gd(V) + s - 1
\end{equation*}
by the previous lemma. The conclusion follows.
\end{proof}

\end{document}